\numberwithin{equation}{section} 
\theoremstyle{plain}
\newtheorem{theorem}{Theorem}[section]
\newtheorem{lemma}[theorem]{Lemma}
\newtheorem{corollary}[theorem]{Corollary}
\newtheorem{proposition}[theorem]{Proposition}
\theoremstyle{definition}
\newtheorem{definition}[theorem]{Definition}
\newtheorem{remark}[theorem]{Remark}
\numberwithin{equation}{section}
\newcommand{\diag}{{\rm diag\,}}
\newcommand{\eins}{\leavevmode\hbox{\small1\kern-3.8pt\normalsize1}}
\DeclareMathOperator{\Tr}{Tr} 
\DeclareMathOperator{\gO}{O}
\begin{document}

\begin{center}
{\bfseries \LARGE  Multiplicative Convolution of Real  Asymmetric \\[0.4ex] 
and Real Antisymmetric Matrices}\\[2\baselineskip]
{\LARGE Mario Kieburg\footnote{mkieburg@physik.uni-bielefeld.de}}\\[.5\baselineskip]
{\itshape Fakult\"at f\"ur Physik,
Universit\"at Bielefeld, PO Box 100131, 33501 Bielefeld, Germany}
\\[\baselineskip]
{\LARGE P. J. Forrester\footnote{pjforr@unimelb.edu.au}, %
J.R. Ipsen\footnote{jesper.ipsen@unimelb.edu.au}}\\[.5\baselineskip]
{\itshape ARC Centre of Excellence for Mathematical and Statistical Frontiers,\\
School of Mathematics and Statistics, The University of Melbourne, Victoria 3010, Australia.}\\

\end{center}

\begin{abstract}
\noindent
The singular values of products of standard complex Gaussian random matrices, or
sub-blocks of Haar distributed unitary matrices, have the property that their probability
distribution has an explicit, structured form referred to as a polynomial ensemble.
It is furthermore the case that the corresponding bi-orthogonal system can be determined in
terms of Meijer G-functions, and the correlation kernel given as an explicit double contour
integral. It has recently been shown that the Hermitised product
$X_M \cdots X_2 X_1A X_1^T X_2^T \cdots X_M^T$, where each
$X_i$ is a standard real complex Gaussian matrix, and $A$ is real anti-symmetric shares exhibits
analogous properties. Here we use the theory of spherical functions and transforms to present
a theory which, for even dimensions, includes these properties of the latter product as a special case. 
As an example we show that the theory also allows for a
treatment of this class of Hermitised product when the $X_i$ are chosen as sub-blocks of
Haar distributed real orthogonal matrices.

\medskip
\noindent
{\bf Keywords:} products of random matrices; spherical function; bi-orthogonal functions; matrix integrals

\medskip
\noindent
{\bf MSC:} 15A52, 42C05
\end{abstract}

\section{Introduction}\label{sec:intro}

A theoretical  and practical feature of random matrix theory is that there are
many explicit, structured formulas for statistical quantities which hold for general values of
the matrix size. As an example, let $X$ be an $n \times n$ complex Gaussian matrix, or an
$n \times n$ sub-block of an $N \times N$ ($N \ge 2n$) unitary matrix drawn from the Haar measure.
In both cases there is a simple, explicit formula for the joint probability density function (jPDF) of the
squared singular values (or equivalently, eigenvalues of $X^\dagger X$) of the form
\begin{equation}\label{1}
\prod_{l=1}^n w(x_l) \Delta_n^2(x), \qquad
w(x) = \left \{ \begin{array}{ll} e^{-x} \Theta(x), & {\rm Gaussian}, \\
x^\mu(1 - x)^\nu \Theta(x(1-x)), & {\rm Jacobi\ (truncated \, unitary)}. \end{array} \right.
\end{equation}
Here $\Delta_n(x) := \prod_{1 \le j < k \le n}(x_k - x_j)$ is the Vandermonde product of differences,
$\Theta(y)$ is the
Heaviside step function, and
in the truncated unitary case the exponents $(\mu,\nu)$ depend on $n,N$. Moreover, an analogous expression
holds true in the case that $X$ is a real Gaussian matrix, or a sub-block of a real orthogonal matrix
chosen with Haar measure; the essential structural difference is that the Vandermonde product
$\Delta_n(x)$ occurs to the first power rather than to the second. See e.g.~\cite{Mehta,Fo10} for textbook
treatment of these and related results.

Consider now the product $X_1 X_2 \cdots X_M$ of random matrices where each $X_i$ is chosen
independently as an $n \times n$ complex Gaussian matrix, or chosen independently as an
$n \times n$ sub-block of a Haar distributed unitary matrix. It is a relatively recent finding
(see \cite{AIK13,AKW13,KKS15} or \cite{AI15} for a review) that the squared singular values form a so-called polynomial ensemble~\cite{KS14},
meaning that they have a jPDF of the form
\begin{equation}\label{2}
\Delta_n(x) \det [ g_j(x_k) ]_{j,k=1}^n
\end{equation}
for certain functions $\{g_j(x) \}_{j=1}^n$. It was furthermore shown that for both classes of $X_i$ the corresponding
bi-orthogonal system determining the correlation kernel can be made explicit by the use
of Meijer G-functions~\cite{AKW13,AIK13,KZ14,KKS15}. However, in distinction to the case
$M=1$, there are no known analogous results for general $M$ when each
$X_i$ is chosen as a
 real Gaussian matrix, or as a sub-block of a Haar distributed real orthogonal matrix.
 
 One explanation for the distinction between the cases of real and complex entries is the group integral
\begin{equation}\label{3} 
 \int_{K} \exp[ {\rm Tr} \,A k B k^\dagger] \, d^* k,
\end{equation} 
 where $K = {\rm O}(n)$ (real case) or $K = {\rm U}(n)$ (complex case). The crucial point is whether the matrices $A,B$ are in the Lie-algebra of the corresponding groups or in their ``dual" symmetric spaces. In the latter case $A,B$ are $n \times n$ real symmetric matrices or Hermitian matrices while in the former case they are real anti-symmetric or anti-Hermitian, respectively. When both matrices are in the corresponding Lie-algebra the group integral is known as the Harish-Chandra integral and permits an evaluation in terms of determinants~\cite{HC57}. The case of $A$ and $B$ in the symmetric space is known as the Itzykson--Zuber integral~\cite{Itzykson:1980} and an explicit, compact evaluation for arbitrary dimension $n$ is only possible for the unitary case $K ={\rm U}(n)$ because it equals with the Harish-Chandra integral. This is no longer true for the real case; see \cite{KKS15} for an extended discussion of the role of group integrals in the computation of the jPDF of the singular values for product matrices of Gaussians or truncated unitaries. Another, closely related, viewpoint has emerged from the recent joint work of K\"osters with one of the authors~\cite{KK16a,KK16b} which is based on harmonic analysis~\cite{He00}. Here, generalising the role played by
the Mellin transform in the study or products of scalar random variables, a theory of random product
matrices based on the spherical transform has been proposed. This in turn requires multi-dimensional
spherical functions, and it is only in the complex case that these functions admit explicit, structured
(determinantal) evaluations known as the Gelfand-Na\v{\i}mark integral~\cite{GN50}.

Let us return to the squared singular values of the random product matrix $X_1 X_2 \cdots X_M$. Those singular values are equal to
the eigenvalues of the Hermitised product
\begin{equation}\label{4} 
X_M \cdots X_2 X_1 X_1^\dagger X_2^\dagger \cdots X_M^\dagger.
\end{equation}
In a recent work involving two of the present authors \cite{FILZ17} it has been shown that taking
a viewpoint of Hermitised products does allow for a natural theory of explicit, structured formulas for
eigenvalue jPDFs and associated correlation functions for the $X_i$ being real Gaussian matrices. For this
to be possible, Eq.~(\ref{4}) must first be generalise to the form
\begin{equation}\label{4a} 
X_M \cdots X_2 X_1 A X_1^T X_2^T \cdots X_M^T,
\end{equation}
where $A$ is a real anti-symmetric matrix. The explicit results of~\cite{FILZ17} are
based on the Harish-Chandra group integral for the orthogonal group~\cite{HC57,Ey07},
\begin{equation}\label{5}
\int_{K=\gO(2n)} \exp\left[\frac{1}{2}\Tr(XkYk^T)\right]\, d^*k=  \prod_{k=0}^{n-1}(2k)! \frac{\det[\cosh x_iy_j]_{i,j=1,\ldots,n}}{\Delta _n(x^2)\Delta_n(y^2)},
\end{equation}
where $X,Y$ are $2n \times 2n$ antisymmetric matrices with
singular values $\{x_j\}, \, \{y_j\}$,
and its analogue for the odd dimensional case.

With regard to the harmonic analysis approach~\cite{KK16a,KK16b}, the 
natural question of an understanding of explicit formulas in relation to the Hermitised product~(\ref{4a}) from the viewpoint of spherical transforms arises. The purpose of the present work is to provide such
an insight in the case that the matrices are all even dimensional. As examples we do not only review the case when all $X_i$ in Eq.~(\ref{4a}) are real Gaussians but also extend this study to the case of truncations of real orthogonal matrices or more general matrices drawn from a real Jacobi ensemble.

Section~\ref{sec:spher}  specifies the appropriate spherical functions and spherical
transforms as relevant to Eq.~(\ref{4a}). For this purpose we introduce our notations and review the main ideas of the harmonic analysis approach of~\cite{KK16a,KK16b}. Theorem~\ref{thm:spher.func} specifies in terms of
determinants and Vandermonde products the central object of this study, namely the corresponding 
spherical function $\Phi(s;x)$. It is defined as the ratio of two group integrals, see Eq.~(\ref{spher:as}). The evaluation
of $\Phi(s;x)$, given in the Appendix, makes essential use of the Harish-Chandra integral~(\ref{5}) for the orthogonal group $K={\rm O}(2n)$.

To obtain analytically tractable expressions for the spherical transform and thus for the jPDF of the singular values, we require that $A$ in Eq.~(\ref{4a}) is either a fixed real anti-symmetric matrix or drawn from a polynomial ensemble, and that the $X_i$ are identically and
independently distributed according to what we term a factorising ensemble (see Definition~\ref{def:poly.ens}). We derive the jPDFs and bi-orthogonal functions of the product matrix~\eqref{4a} in Sec.~\ref{sec:gen}. In both situation the jPDFs satisfy the form of a polynomial ensemble, see Corollary~\ref{cor:jPDF.poly}  and Theorem~\ref{thm:jPDF.fixed}, and the corresponding bi-orthogonal system and the correlation kernels look very similar to results obtained for products of complex matrices~\cite{KK16b} as well as sums of Hermitian, real anti-symmetric, Hermitian anti-selfdual or complex rectangular matrices~\cite{Ki17}.

In Section~\ref{sec:examples} the general fomalism is specialised to the case that the
$X_i$ in Eq.~(\ref{4a}) are either real Gaussians, so reclaiming the corresponding results in \cite{FILZ17}, or truncations
of real orthogonal matrices (Jacobi ensemble). In particular the case of the jPDF of the squared singular values of $A=\imath\eins_n\otimes\tau_2$ ($\tau_2$ is the second Pauli matrix) surprisingly coincides with the jPDF of the eigenvalues of
$  Y_{2M} \cdots Y_1  Y_1^\dagger \cdots Y_{2M}^\dagger$, where each $Y_i$ is a particular
truncation of a Haar distributed complex unitary matrix, see Proposition~\ref{prop4.5}. This observation compliments the one in~\cite{FILZ17} made for the Gaussian case.

In Section~\ref{sec:conclusio} we summarize our results and outline some open problems related to the present work.

\section{Spherical Functions and Spherical Transforms}\label{sec:spher}

We study the multiplicative group action of the real general linear group ${\rm Gl}_{\mathbb{R}}(2n)$ on the even dimensional real antisymmetric matrices which is the Lie algebra ${\rm o}(2n)$ of the orthogonal group ${\rm O}(2n)$. To state our results, the following measurable matrix spaces
and associated notations are required:
\begin{enumerate}
\item	the real general linear group $G={\rm Gl}_{\mathbb{R}}(2n)$ equipped with the  Lebesgue measure $dg$,
\item	the real antisymmetric matrices $H={\rm o}(2n)$ equipped with the flat Lebesgue measure $dx$,
\item	the positive real diagonal $n\times n$ matrices $A\simeq\mathbb{R}_+^n$ equipped with the flat Lebesque measure $da$,
\item	the $2\times2$ block-diagonal real matrices $Z=\{z\in{\rm Gl}_{\mathbb{R}}(2n)| z=\diag(z_1,\ldots,z_n)\ {\rm with}\ z_j\in{\rm Gl}_{\mathbb{R}}(2)\}$ equipped with the flat Lebesgue measure $dz$,
\item	the group of the lower triangular matrices with a $2\times2$ block structure, unit elements on the diagonal
 $$T=\biggl\{t\in{\rm Gl}_{\mathbb{R}}(2n)\biggl| t=\{t_{ab}\}_{a,b=1\ldots,n}\ {\rm with}\ t_{ij}\in{\rm gl}_{\mathbb{R}}(2),\ t_{ij}=\left\{ \begin{array}{cl} 0, & i<j, \\ \eins_2, & i=j \end{array}\right.\qquad\biggl\}$$
			equipped with the flat Lebesgue measure $dt$,
\item	the orthogonal group $K={\rm O}(2n)$ equipped with the normalized Haar measure $d^*k$.
\end{enumerate}
We remark that singular matrices form a set of measure zero with respect to the flat Lebesgue measure.
The flat Lebesgue is the product of the differentials of all independent matrix entries. Additionally, we need the set of $K$-invariant Lebesgue integrable functions on $G$ and $H$  and the symmetric integrable functions on $A$. These we define as
\begin{equation}\label{L1.def}
\begin{split}
L^{1,K}(G)=&\{ f_G\in L^{1}(G)|\ f_G(g)=f_G(k_1gk_2)\ {\rm for\ all}\ g\in G\ {\rm and}\ k_1,k_2\in K\},\\
L^{1,K}(H)=&\{ f_H\in L^{1}(H)|\ f_H(x)=f_H(kxk^T)\ {\rm for\ all}\ x\in H\ {\rm and}\ k\in K\},\\
L^{1,\mathbb{S}}(A)=&\{f_A\in L^{1}(A)|\ f_A(a)=f_A(\sigma a\sigma^T)\ {\rm for\ all}\ a\in A\ {\rm and}\ \sigma\in \mathbb{S}\}.
\end{split}
\end{equation}
The matrix $k^T$ is the transposition of $k$ and the set $\mathbb{S}$ is the symmetric group of $n$ elements. We denote the subset of probability densities of these function spaces by $L_{\rm Prob}^{1,K}(G)$, $L_{\rm Prob}^{1,K}(H)$ and $L_{\rm Prob}^{1,K}(H)$. We adapt the notation of~\cite{KK16b} and 
include the space on which a function belongs as a subscript, like $f_G$, $f_H$ or $f_A$.

We equip the spaces $\mathcal{I}:L^{1,K}(H)$ and $L^{1,\mathbb{S}}(A)$ with the $L^1$-norm. Then there is an isometry $\mathcal{I}:L^{1,K}(H)\rightarrow L^{1,\mathbb{S}}(A)$ of the form
\begin{equation}\label{isometry}
f_A(a)=\mathcal{I}f_H(a)=C\Delta_n^2(a^2)f_H\left(\imath a\otimes\tau_2\right),\ C=\frac{1}{n!}\left(\prod_{j=0}^{n-1}\frac{2(2\pi)^{2j}}{(2j)!}\right),
\end{equation}
with the Vandermonde determinant $\Delta_n(a^2)=\prod_{1\leq k<l\leq n}(a_l^2-a_k^2)$ and $\tau_2$ the second Pauli matrix.  The formula (\ref{isometry}) comes
about from the change of variables associated with the decomposition
$H = K (\imath a \otimes \tau_2) K^T$; see e.g.~\cite[Eq.~(2.8)]{FILZ17}.

There is another map which is important in our calculations. It corresponds to the change of
variables associated with the particular Schur decomposition of a matrix $g\in G$ into the form $g=ktzk^T$ with $z\in Z$, $t\in T$ and $k\in K$. The jPDF $p_Z$ of $z$ for a given $K$-invariant distribution $P_G\in L_{\rm Prob}^{1,K}(G)$ is given by~\cite{Ed97}
\begin{equation}\label{jPDF.Z}
p_Z(z)= C_*\prod_{1\leq k<l\leq n}|\det(z_l\otimes\eins_2-\eins_2\otimes z_k)|\left(\prod_{j=1}^n \det (z_jz_j^T)^{n-j}\right)\int_T dt P_G(tz),
\end{equation}
where the explicit form of the constant $C_* = {\rm vol} \, {\rm O}(2n) / (n! [{\rm vol} \, {\rm O}(2)]^n)$ is not  important for our purposes.
The map we need in our results corresponds to  the final two factors in Eq.~\eqref{jPDF.Z},
\begin{equation}\label{T.def}
\tilde{p}_Z(z)=\mathcal{T}P_G(z)=\left(\prod_{j=1}^n \det (z_jz_j^T)^{n-j}\right)\int_T dt P_G(tz).
\end{equation}
Two remarks are in order. First, it is unclear whether this map can be inverted. For the counterpart of bi-unitary invariant complex matrices this map is indeed invertible, see~\cite{KK16a}. Second, it is also unclear whether  there is always a $K$-invariant probability distribution on $G$ for a given $K$-invariant probability distribution $p_Z$ on $Z$. In the case of complex matrices we have seen that this is not necessarily guaranteed.

The question which we want to address is the following: given two K-invariant random matrices $g\in G$ and $x\in H$ generated by the distributions $P_G\in L_{\rm Prob}^{1,K}(G)$ and $P_H\in L_{\rm Prob}^{1,K}(H)$, respectively, what is the jPDF of the singular values of  the matrix $y=gxg^T$? We follow the same general ideas as in~\cite{KK16b} and try to find  spherical transformations $\mathcal{S}_{\Phi}$ and $\mathcal{S}_{\Psi}$ on $H$ and $G$, respectively, such that the natural convolution on the two matrix spaces $G$ and $H$, given by
\begin{equation}\label{con:def}
P_G\circledast P_H(y)=\int_G \frac{dg}{(\det gg^T)^{(2n-1)/2}} P_G(g)P_H(g^{-1}y(g^{-1})^T)\in L_{\rm Prob}^{1,K}(H),
\end{equation}
satisfies the factorization
\begin{equation}\label{factor.conv}
\mathcal{S}_{\Phi}[P_G\circledast P_H](s)=\mathcal{S}_{\Psi}P_G(s)\mathcal{S}_{\Phi}P_H(s).
\end{equation}
The metric $dg/(\det gg^T)^{(2n-1)/2}$ has been chosen for the convolution and not the Haar measure $d[g]/(\det gg^T)^{n}$ on $G={\rm Gl}_{\mathbb{R}}(2n)$ because we want to have the natural normalization property
\begin{equation}\label{norm:conv}
\int_H dy P_G\circledast P_H(y)=\int_G dg P_G(g)\int_H dx P_H(x);
\end{equation}
note that for $a \in H$ fixed, $d(aya^T) = (\det a a^T)^{(2n-1)/2}dy$, see e.g.~\cite[Exercises 1.3 q.2]{Fo10}.

When the intermediate goal~\eqref{factor.conv} is accomplished we can perform the matrix averages separately and invert the spherical transform $\mathcal{S}_{\Phi}$ at the end. However the spherical transforms of arbitrary distributions $P_G$ and $P_H$ can have very complicated expressions. As we already know from the multiplicative convolution of complex matrices~\cite{KK16a} and from the additive convolution~\cite{KR16,FKK17} of Hermitian antisymmetric matrices, Hermitian matrices, Hermitian anti-self-dual matrices, and complex rectangular matrices one has to restrict the class of ensembles for the
results to be analytically tractable. For this reason we define the two sets of functions
\begin{equation}\label{L1.def.b}
\begin{split}
L^1_n(\mathbb{R}_+)=&\biggl\{f\in L^1(\mathbb{R}_+)\biggl|\int_0^\infty a^{s-1}|f(a)|da<\infty\ {\rm for\ all}\ s\in[1,2n-1]\biggl\}, \\
L^1_n({\rm Gl}_{\mathbb{R}}(2))=&\biggl\{h\in L^1({\rm Gl}_{\mathbb{R}}(2))\biggl|\int_{{\rm Gl}_{\mathbb{R}}(2)} |p(z)f(a)|da<\infty\ \text{ for\ all\ polynomials\ $p$\ in $z$\ homogenous\ of}\\
&\text{\ order}\ 0,\ldots,2n-2\biggl\}.
\end{split}
\end{equation}

\newpage
\begin{definition}[Polynomial Ensembles on $H$ and Factorizing Ensembles on $G$]\label{def:poly.ens}\
\begin{enumerate}
 \item	A polynomial ensemble on $H$ is a random matrix ensemble on $H$ with a probability distribution $P_H\in L_{\rm Prob}^{1,K}(H)$ such that the jPDF of the singular values $a\in A$ has the form
 			\begin{equation}\label{poly.def}
 			p_A(a)=\mathcal{I}P_G(a)=C_n[w]\Delta_n(a^2)\det[w_b(a_c)]_{b,c=1,\ldots,n}.
 			\end{equation}
 We call this a polynomial ensemble \cite{KS14} on $H$ associated with the weights $w_1,\ldots,w_n\in L^1_n(\mathbb{R}_+)$. The factor $C_n[w]$ is the normalization constant.
 \item	A factorizing ensemble on $G$ is a random matrix ensemble on $G$ with a probability density $P_G\in L_{\rm Prob}^{1,K}(G)$ such that
 			\begin{equation}\label{fact.def}
 			\tilde{p}_Z(z)=\mathcal{T}P_G(z)=\prod_{j=1}^n \sigma(z_j).
 			\end{equation}
			We call this a factorizing ensemble on $G$ associated with the weight $\sigma\in L^1_n({\rm Gl}_{\mathbb{R}}(2))$.
\end{enumerate}
\end{definition}

The definition of polynomial ensembles on $H$ slightly differs from~\cite{KS14} since we consider now the jPDF of the singular values of a random matrix $y\in H$ and not of its squared singular values.

The construction of the spherical transform satisfying the relation~\eqref{factor.conv} proceeds via spherical functions, see~\cite{He00,Te88a,FK94,JL01}. In the simplest cases these are the Fourier factors for the Fourier transform, the renormalized Bessel functions of the first kind for the Hankel transform or  the monomials of the Mellin transform. For the multiplication of complex random matrices a non-trivial generalization of the monomials have proven quite helpful~\cite{KK16a,KK16b}. Progress in the present case also requires the  identification of appropriate spherical functions.

\begin{definition}[Spherical Functions on $H$ and $G$]\label{def:spherical-functions}\

Let $x\in H$ and $g\in G$ be two fixed matrices and $s=\diag(s_1,\ldots,s_n)\in\mathbb{C}^n$ an $n$-tuple of complex numbers. Moreover we set $s_{n+1}=-n-1$ and specify the rectangular matrix $\Pi_{2j,2l}$ as the orthogonal projection from $2l$ rows to the first $2j$ rows. We define
\begin{enumerate}
\item	the spherical function on $H={\rm o}(2n)$ as
			\begin{equation}\label{spher:as}
				\Phi(s; x)=\frac{\int_{K}d^*k\prod_{j=1}^{n} [\det \Pi_{2j,2n}kxk^T\Pi_{2j,2n}^T]^{(s_j-s_{j+1})/2-1}}{\int_{K}d^*k\prod_{j=1}^{n} [\det \Pi_{2j,2n}k(\imath \eins_n\otimes\tau_2 )k^T\Pi_{2j,2n}^T]^{(s_j-s_{j+1})/2-1}}
			\end{equation}
			for ${\rm Re}\,(s_j-s_{j+1})\geq 2$ for all $j=1,\ldots,n-1$ and analytically continue $\Phi$ to ${\rm Re}\,(s_b-s_b+1)< 2$ for some $b=1,\ldots, n$
\item	and the spherical function on $G={\rm Gl}_{\mathbb{R}}(2n)$ by
			\begin{equation}\label{spher:s}
				\Psi(s; g)=\int_{K}d^*k\prod_{j=1}^{n} [\det \Pi_{2j,2n}kgg^Tk^T\Pi_{2j,2n}^T]^{(s_j-s_{j+1})/2-1}
			\end{equation}
			for all $s\in\mathbb{C}^n$.
\end{enumerate}
\end{definition}

The analytic continuation of $\Phi$ in $s$ as noted below Eq.~(\ref{spher:as}) is indeed important. The reason is that the integrand (in the numerator as well as in the denominator) may have some singularities when ${\rm Re}\,(s_j-s_{j+1})\geq 2$ for all $j=1,\ldots,n-1$ is not satisfied. The absolute integrability is only guaranteed in the stated regime of $s$. For the spherical function $\Psi$ we do not have this restriction since $\det \Pi_{2j,2n}kgg^Tk^T\Pi_{2j,2n}^T$ is positive definite and, hence, invertible for all $j=1,\ldots,n$ because $gg^T$ is positive definite. The analytic continuation of $\Phi$ is possible since $\det \Pi_{2j,2n}kxk^T\Pi_{2j,2n}^T$ is non-negative and $\Phi(s,x)/(\max_{j=1,\ldots,n} a_j)^{\sum_{l=1}^n s_l}$ is bounded in the hyper-half-plane ${\rm Re}\,(s_j-s_{j+1})\geq 2$ for all $j=1,\ldots,n-1$. Here $a\in A$ are the singular values of $x$. The analytic continuation is then given via Carlson's theorem~\cite{Ba35,Mehta}.

The normalization factor in the denominator~\eqref{spher:as} is also important since it is non-trivial for $n>2$, see Eq.~\eqref{a.proof.5} below.  First of all, it simplifies the normalization of the  spherical function which is  $\Phi(s; \imath\eins_n\otimes\tau_2)=1$. For $\Psi$ the normalization is  fixed as $\Psi(s; \eins_{2n})=1$. Moreover this particular normalization allows us to find a very simple and explicit form of the spherical function in terms of the singular values $a\in A$ of $x\in H$. This will be our first main result which is proven in Appendix~\ref{AppA}.

\begin{theorem}[Spherical Function $\Phi$]\label{thm:spher.func}\

Let $a\in A$ and $s\in\mathbb{C}^n$ with non-degenerate spectra, i.e. $a_l\neq a_k$ and $s_l\neq s_k$ for $l\neq k$. The spherical function~\eqref{spher:as} has the explicit form
			\begin{equation}\label{spher:as.b}
				\Phi(s; \imath a\otimes\tau_2)=\left(\prod_{j=0}^{n-1}2^{j}j!\right)\frac{\det[a_c^{s_b+n-1}]_{b,c=1,\ldots,n}}{\Delta_n(a^2)\Delta_n(s)}.
			\end{equation}
\end{theorem}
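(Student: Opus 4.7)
The plan is to evaluate the $K$-integral in the numerator of $\Phi(s;\imath a\otimes\tau_2)$ by recasting the product of nested sub-principal determinants as a generating integral to which the orthogonal Harish-Chandra formula~\eqref{5} applies, and then inverting the generating integral to read off the determinantal form in~\eqref{spher:as.b}.

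First I would isolate the $j=n$ factor, which is $k$-independent since $\det(kxk^T)=\det x=\prod_\ell a_\ell^2$, contributing $\prod_\ell a_\ell^{s_n+n-1}$ after taking the exponent $(s_n-s_{n+1})/2-1=(s_n+n-1)/2$. For each $j<n$ I would introduce an auxiliary antisymmetric variable $Y_j$ supported on the upper-left $2j\times 2j$ block and represent the factor $[\det\Pi_{2j,2n}kxk^T\Pi_{2j,2n}^T]^{(s_j-s_{j+1})/2-1}$ as a Mellin-type integral $\int dY_j\, w_j(Y_j)\exp[\tfrac12\Tr(Y_j\,kxk^T)]$ for a suitable Pfaffian-based weight $w_j$. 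After interchanging the order of integration and combining the $Y_j$'s into a single block-structured antisymmetric matrix $X(t)$ whose singular values $x_i(t)$ are linear combinations of the auxiliary parameters, the $k$-integration collapses to a single application of~\eqref{5}, yielding a ratio of $\det[\cosh x_i(t)\,a_j]$ and Vandermonde factors.

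Next, expanding each $\cosh$ in its even Taylor series and performing the remaining $t$-integrations via $\Gamma$-function identities produces a sum indexed by elements of the symmetric group $\mathbb{S}_n$. The Weyl denominator identity rearranges this sum into the structural form $\det[a_c^{s_b+n-1}]/\Delta_n(a^2)$ multiplied by a product of $\Gamma$-factors depending only on $s$. Dividing by the same integral evaluated at $a=(1,\ldots,1)$, namely the denominator of~\eqref{spher:as}, cancels the $s$-dependent $\Gamma$-factors against the confluent limit of the resulting Cauchy-like determinant, leaving the Vandermonde $\Delta_n(s)$ in the denominator together with the overall combinatorial constant $\prod_{j=0}^{n-1}2^j j!$ coming from~\eqref{5}.

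The main obstacle is the rigorous manipulation of the Mellin representation when the exponents $(s_j-s_{j+1})/2-1$ are not uniformly negative. I would sidestep this by carrying out all manipulations in the absolutely convergent regime $\mathrm{Re}(s_j-s_{j+1})\gg 2$, then invoke the Carlson-theorem argument noted below~\eqref{spher:as} to extend the identity to all $s$. A secondary technicality is the confluent evaluation at $a=(1,\ldots,1)$: both sides of the candidate identity formally degenerate there, but applying the Weyl denominator identity once in $a$ and once in $s$ produces the stated constant without miraculous cancellations.
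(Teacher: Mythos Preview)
Your approach is genuinely different from the paper's, which proceeds by an induction on $n$: it first derives the jPDF of the singular values of the corank-$2$ projection $\Pi_{2(n-1),2n}k(\imath a\otimes\tau_2)k^T\Pi_{2(n-1),2n}^T$ via a single application of~\eqref{5}, then uses this to set up a recursion $f_n\to f_{n-1}$ for the numerator integral, which is solved with Andr\'eief's identity. The Harish-Chandra formula is applied once per inductive step, never to the full nested product at once.

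Your direct route has a real gap at its load-bearing step. The Mellin-type representation you invoke, $(\det M_j)^{\alpha_j}=\int dY_j\,w_j(Y_j)\exp[\tfrac12\Tr(Y_j M_j)]$ with $M_j$ antisymmetric, is not a standard identity; the analogous Siegel/Gindikin integral works on the cone of positive-definite matrices, but even-dimensional antisymmetric matrices do not form such a cone, and a $K$-invariant weight $w_j$ producing a pure determinantal power would have to solve an inverse problem for the Harish-Chandra kernel $\det[\cosh(y_im_j)]$ that generically has no ordinary-function solution. More seriously, when you ``combine the $Y_j$'s into a single block-structured antisymmetric matrix $X(t)$,'' the $Y_j$ live on \emph{nested} upper-left blocks, so their sum is a generic antisymmetric matrix whose singular values are highly nonlinear functions of the block entries---they are certainly not ``linear combinations of the auxiliary parameters.'' Without that linearity, a single application of~\eqref{5} does not collapse the remaining integrals into a tractable form, and the subsequent $\Gamma$-function/Weyl-denominator manipulations have nothing to act on.

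If you want to salvage a non-recursive argument, the natural object to work with is not a matrix-valued auxiliary integral but the Iwasawa/QR decomposition of $k$ itself, which is how the Gelfand--Na\u{\i}mark formula for ${\rm U}(n)$ is usually derived; the nested principal minors then factor through the triangular part directly. Alternatively, follow the paper and peel off one $2\times2$ block at a time---each step is a genuine Harish-Chandra integral of size $2n$, and the resulting one-dimensional integrals are elementary.
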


The explicit result~\eqref{spher:as.b} for the function $\Phi(s; a)$ is the counterpart of the Gelfand-Na\u{\i}mark integral~\cite{GN50} for the spherical function of the unitary group ${\rm U}(n)$. Equation~\eqref{spher:as.b} is helpful in achieving our goal of finding the jPDF of the singular values of the product matrix $y=gxg^T$. There is a deeper group theoretical reason why the  result~\eqref{spher:as.b} can be obtained while the group integral for the spherical function $\Psi(s; g)$ does not permit an analogous
evaluation. The situation is the same as for the relation between the Harish-Chandra~\cite{HC57} and the Itzykson--Zuber~\cite{Itzykson:1980} integrals. While $x$ is in the Lie-algebra of ${\rm O}(2n)$ the matrix $gg^T$ is an element of the coset ${\rm Gl}_{\mathbb{R}}(2n)/{\rm O}(2n)$ which is not a Lie-algebra.

The spherical functions $\Phi$ and $\Psi$ have an important property which is crucial for the identity~\eqref{factor.conv}, and justifies their status as spherical functions, see \cite[Chapter IV]{He00} for the original definition of spherical functions.

\begin{lemma}[Factorization of $\Phi$ and $\Psi$]\label{lem:fact.spher}\

Let $g,g'\in G$, $x\in H$ and $s\in\mathbb{C}^n$. We have
\begin{equation}\label{factorization}
\int_{K}d^*k \, \Phi(s; gkxk^Tg^T)=\Psi(s; g)\Phi(s;x)
\end{equation}
and
\begin{equation}\label{factorization.b}
\int_{K}d^* k \, \Psi(s; gkg'{g'}^Tk^Tg^T)=\Psi(s; g)\Psi(s;g').
\end{equation}
\end{lemma}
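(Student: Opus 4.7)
The plan is to realise both $\Phi$ and $\Psi$ as $K$-averages of a common kernel and then to decouple the nested integrals via the Iwasawa decomposition of $G$. Introduce
\[
\chi(y) := \prod_{j=1}^n \bigl[\det\Pi_{2j,2n}\,y\,\Pi_{2j,2n}^T\bigr]^{(s_j-s_{j+1})/2-1},
\]
so that $\Psi(s;g)=\int_K d^*k\,\chi(kgg^Tk^T)$ and $\Phi(s;x)=N_0^{-1}\int_K d^*k\,\chi(kxk^T)$ with $N_0$ the denominator in~\eqref{spher:as}. The key algebraic input is the identity $\Pi_{2j,2n}\,l=l^{(2j)}\,\Pi_{2j,2n}$ for any lower triangular $l\in G$ with upper-left $2j\times 2j$ block $l^{(2j)}$, valid since the first $2j$ rows of $l$ vanish in columns $2j+1,\ldots,2n$; similarly $\Pi_{2j,2n}\,a=a^{(2j)}\,\Pi_{2j,2n}$ for diagonal $a$. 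Squaring these gives
\[
\chi(l\,y\,l^T)=\Bigl(\prod_{j=1}^n(\det l^{(2j)})^{s_j-s_{j+1}-2}\Bigr)\chi(y),\qquad \chi(a\,y\,a)=\mu_s(a)\,\chi(y),
\]
with $\mu_s(a):=\prod_{j=1}^n(a_1\cdots a_{2j})^{s_j-s_{j+1}-2}$.

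Next, combine these with the Iwasawa decomposition $h=l(h)\,a(h)\,k(h)$ of $h\in G$ (with $l(h)$ lower unipotent, $a(h)$ positive diagonal, $k(h)\in K$). Since $l(h)$ is unipotent, $\det l(h)^{(2j)}=1$, and the two identities chain into the master formula
\[
\chi(h\,y\,h^T)=\mu_s(a(h))\,\chi\bigl(k(h)\,y\,k(h)^T\bigr),
\]
valid for any matrix $y$. In particular, $\chi(hh^T)=\mu_s(a(h))$, yielding the Harish-Chandra-type representation $\Psi(s;g)=\int_K d^*k\,\mu_s(a(kg))$. The second ingredient is the right-$K$-equivariance of the Iwasawa projection, $a(hk)=a(h)$ and $k(hk)=k(h)\,k$ for all $k\in K$, which is immediate from uniqueness of the decomposition.

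Equations \eqref{factorization} and \eqref{factorization.b} now reduce to short computations. For \eqref{factorization} one writes
\[
\int_K d^*k\,\Phi(s;gkxk^Tg^T)=N_0^{-1}\int_K\!\int_K d^*k\,d^*k'\,\chi(k'gk\,x\,k^Tg^T{k'}^T),
\]
and applies the master formula with $h=k'gk$, $y=x$. Using $a(k'gk)=a(k'g)$ and $k(k'gk)=k(k'g)\,k$, the integrand factorises as $\mu_s(a(k'g))\,\chi(k(k'g)\,k\,x\,k^T\,k(k'g)^T)$. By Fubini one integrates in $k$ first; then left-invariance of the Haar measure on $K$ (the substitution $k\mapsto k(k'g)^{-1}k$) reduces the inner integral to $\int_K d^*k\,\chi(kxk^T)=N_0\Phi(s;x)$, independent of $k'$. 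The remaining outer integral $\int_K d^*k'\,\mu_s(a(k'g))$ is precisely $\Psi(s;g)$, which proves \eqref{factorization}. The argument for \eqref{factorization.b} is verbatim the same after replacing $x$ by $g'{g'}^T$ and $\Phi$ by $\Psi$ (so the factor $N_0^{-1}$ is absent); the inner integral now evaluates to $\int_K d^*k\,\chi(kg'{g'}^Tk^T)=\Psi(s;g')$.

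The genuine content of the proof is the master formula, which encodes the fact that $\chi$ is a height character relative to the Iwasawa decomposition and identifies $\Psi$ as a classical spherical function on $G/K$ in the sense of Helgason; the present lemma is then the standard defining functional equation for such spherical functions. The only real algebraic work is the minor identity $\Pi_{2j,2n}\,l=l^{(2j)}\,\Pi_{2j,2n}$ for lower triangular $l$; everything else is bookkeeping with the bi-invariance of the Haar measure.
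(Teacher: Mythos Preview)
Your argument is correct and follows the same core idea as the paper's proof: factor the principal-minor product $\chi$ through a lower-triangular decomposition of $k'g$ (your Iwasawa $l\,a\,k$ is the paper's QR $RQ$ with $R$ further split into unipotent times diagonal), then absorb the residual orthogonal factor into the inner Haar integral. Your packaging via the ``master formula'' $\chi(hyh^T)=\mu_s(a(h))\chi(k(h)yk(h)^T)$ and the identification $\Psi(s;g)=\int_K\mu_s(a(kg))\,d^*k$ is a slightly more conceptual way to phrase the same computation; it also lets you prove \eqref{factorization.b} directly, whereas the paper derives it by multiplying and dividing by an auxiliary $\Phi(s;x)$ and applying \eqref{factorization} twice.

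One point you glossed over: the integral representation $\Phi(s;x)=N_0^{-1}\int_K\chi(kxk^T)\,d^*k$ is only valid for $\mathrm{Re}(s_j-s_{j+1})\ge 2$, since otherwise $\det\Pi_{2j,2n}kxk^T\Pi_{2j,2n}^T$ can vanish and the integrand is not absolutely integrable (so your Fubini step is not justified). The paper handles this by first restricting to that half-space, carrying out your computation there, and then analytically continuing both sides via Carlson's theorem after normalising by the maximal singular value. You should add a sentence to the same effect; the rest stands.
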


\begin{proof}
We first prove Eq.~\eqref{factorization}. For this purpose we consider the case when ${\rm Re}\,z_j\geq 0$ with $z_j=s_j-s_{j+1}-2$ for all $j=1,\ldots,n-1$ to employ the definition~\eqref{spher:as} and then analytically continue in $z_j$. We emphasize that the definition~\eqref{spher:s} of $\Psi$ does not need this restriction since $\det \Pi_{2j,2n}kgg^Tk^T\Pi_{2j,2n}^T$ never vanishes for any $g\in G$, $k\in K$ and $j=1,\ldots,n$.

Let $g\in G$ and $x\in H$. We compute
\begin{equation}
\begin{split}
\int_{K}d^*k&\int_{K}d^*k'\prod_{j=1}^{n} (\det \Pi_{2j,2n}k'gkxk^Tg^T{k'}^T\Pi_{2j,2n}^T)^{z_j/2}\\
=&\int_{K}d^* k\int_{K}d^*k'\prod_{j=1}^{n} (\det \Pi_{2j,2n}RQkxk^TQ^TR^T\Pi_{2j,2n}^T)^{z_j/2}\\
\overset{k\to Q^{-1}k}{=}&\int_{K}d^*k\int_{K}d^*k'\prod_{j=1}^{n} (\det R_{2j}\Pi_{2j,2n}kxk^T\Pi_{2j,2n}^TR_{2j}^T)^{z_j/2}\\
=&\left(\int_{K}d^*k'\prod_{j=1}^{n} (\det R_{2j}R_{2j}^T)^{z_j/2}\right)\left(\int_{K}d^*k\prod_{j=1}^{n} \det\Pi_{2j,2n}kxk^T\Pi_{2j,2n}^T)^{z_j/2}\right),
\end{split}
\end{equation}
where we used the QR-decomposition $k'g=RQ$ with $R$ a lower triangular matrix, $\Pi_{2j,2n}R=R_{2j}\Pi_{2j,2n}$ with $R_{2j}$ the $2j\times2j$ upper left block of $R$, and $Q\in K$ as well as the invariance of the Haar measure. In the last step we redo the QR-decomposition, i.e. $R_{2j}R_{2j}^T=\Pi_{2j,2n}k'gg^T{k'}^T\Pi_{2j,2n}^T$. This is the sought result, except for the analytic continuation.

The analytic continuation can be achieved by dividing $\Phi(s; gkxk^Tg^T)$ by $\max_{j=1,\ldots,n}\{a_j^{\sum_{l=1}^ns_l}\}$ times $\max_{j=1,\ldots,2n}\{\lambda_j^{\sum_{l=1}^ns_l}\}$, where $a\in A$ are the singular values of $x$ and $\lambda\in\mathbb{R}_+^{2n}$ are the singular values of $g$. Dividing $\Phi(s; gkxk^Tg^T)$ by this quantity is equivalent to assuming that the singular values of $g$ and $x$ stay in the interval $[0,1]$ . The integrands of $\Phi$ and $\Psi$ are then bounded, analytic functions in the half planes ${\rm Re}\,z_j\geq 0$ such that we can apply Carlson's theorem~\cite{Ba35,Mehta}.

For Eq.~\eqref{factorization.b}, we apply Eq.~\eqref{factorization} twice to the double group integral
\begin{multline}
\int_{K}d^* k \Psi(s; gkg'{g'}^Tk^Tg^T)\Phi(s;x) \\
\overset{{\rm Eq.}~\eqref{factorization}}{=}\int_{K}d^*k\int_{K}d^*k' \Phi(s; gkg'k'x{k'}^T{g'}^Tk^Tg^T)=\Psi(s; g'{g'}^T)\Psi(s; gg^T)\Phi(s;x).
\end{multline}
Dividing by $\Phi(s;x)$ yields the sought factorization of $\Psi$.
\end{proof}

Both of the spherical functions have a corresponding spherical transform. These spherical transforms are defined similar to the one on ${\rm Gl}_{\mathbb{C}}(N)$, see~\cite{He00,Te88a}.

\begin{definition}[Spherical Transforms corresponding to $\Phi$ and $\Psi$]\label{def:spher.trans}\
 
 Let $P_G\in L^{1,K}(G)$,  $P_H\in L^{1,K}(H)$ and $p_A=\mathcal{I}P_H$.
\begin{enumerate}
\item	 The spherical transform $\mathcal{S}_{\Phi}: L^{1,K}(H)\rightarrow \mathcal{S}_{\Phi}(L^{1,K}(H))$ corresponding to $\Phi$ is defined as
			\begin{equation}\label{Strafo:as}
				\begin{split}
				\mathcal{S}_{\Phi}P_H(s)=&\int_H \frac{dx}{(\det x)^{(2n-1)/2}}P_H(x)\Phi(s; x)\\
				=&\left(\prod_{j=0}^{n-1}2^{j}j!\right)\int_A\frac{da}{(\det a)^{2n-1}} p_A(a)\frac{\det[a_c^{s_b+n-1}]_{b,c=1,\ldots,n}}{\Delta_n(a^2)\Delta_n(s)}=\mathcal{S}_{\Phi}p_A(s)
				\end{split}
			\end{equation}
			for those $s\in\mathbb{C}^n$ for which the integral is defined. In the second line we slightly abuse notation and have to assume that $s_l\neq s_k$ for $l\neq k$.
\item	The spherical transform $\mathcal{S}_{\Psi}: L^{1,K}(G)\rightarrow \mathcal{S}_{\Psi}(L^{1,K}(G))$ corresponding to $\Psi$ is
			\begin{equation}\label{Strafo:s}
			\mathcal{S}_{\Psi}P_G(s)=\int_G \frac{dg}{(\det gg^T)^{(2n-1)/2}}P_G(g)\Psi(s; g)
			\end{equation}
			for those $s\in\mathbb{C}^n$ where the integral exists.	
\end{enumerate} 
\end{definition}

We have chosen the normalizations $\mathcal{S}_\Phi P_H(\{3n-2j\}_{j=1,\ldots,n})=\int_H dx P_H(x)$ and $\mathcal{S}_{\Psi}P_G(\{3n-2j\}_{j=1,\ldots,n})=\int_G dg P_G(g)$. This normalization is similar to that
for the spherical transform on ${\rm Gl}_{\mathbb{C}}(n)$. The particular shift $3n$ off the value $-2j$ reflects the measures used in the integrals~\eqref{Strafo:as} and~\eqref{Strafo:s}. We employ the $G={\rm Gl}_{\mathbb{R}}(2n)$ invariant Haar measure on $H={\rm o}(2n)$ for the spherical transform $\mathcal{S}_{\Phi}$. This choice carries over to the spherical transform $\mathcal{S}_{\Psi}$ allowing for the factorization property \eqref{factor.conv} which will be proven in the next lemma.

\begin{lemma}[Factorization of $S_\Phi$]\label{lem:fact:trans}\

Let $P_G\in L_{\rm Prob}^{1,K}(G)$ and $P_H\in L_{\rm Prob}^{1,K}(H)$. Then the spherical transform of the convolution $P_G\circledast P_H$ is
\begin{equation}\label{factor.conv.b}
\mathcal{S}_{\Phi}[P_G\circledast P_H](s)=\mathcal{S}_{\Psi}P_G(s)\mathcal{S}_{\Phi}P_H(s).
\end{equation}
\end{lemma}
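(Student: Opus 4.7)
The plan is to reduce the identity to the factorization of $\Phi$ proved in Lemma~\ref{lem:fact.spher}. First I would insert the defining integral \eqref{con:def} of $P_G\circledast P_H$ into the definition \eqref{Strafo:as} of $\mathcal{S}_{\Phi}$, obtaining a triple integral over $y\in H$, $g\in G$, and implicitly over the analytic continuation of $\Phi$. The crux is that the combined measure $dy/(\det y)^{(2n-1)/2}$ on $H$ is invariant under $y\mapsto gxg^T$ for any $g\in G$, because the Jacobian identity noted below Eq.~\eqref{norm:conv} contributes $(\det gg^T)^{(2n-1)/2}$ while $\det y=\det(gg^T)\det x$ on the antisymmetric space (with $\det x\ge 0$ in even dimension, so no sign issues arise). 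The extra factor $(\det gg^T)^{(2n-1)/2}$ from the change of variables exactly cancels the measure factor $1/(\det gg^T)^{(2n-1)/2}$ already present in \eqref{con:def}; this is precisely why the convolution was defined with that particular power.

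After this substitution the integral takes the form
\begin{equation*}
\mathcal{S}_{\Phi}[P_G\circledast P_H](s)=\int_G \frac{dg}{(\det gg^T)^{(2n-1)/2}}P_G(g)\int_H \frac{dx}{(\det x)^{(2n-1)/2}}P_H(x)\,\Phi(s;gxg^T).
\end{equation*}
The next step is to exploit the $K$-bi-invariance of $P_H$. Since $P_H(kxk^T)=P_H(x)$ for all $k\in K$ and since $dx$ and $\det x$ are both $K$-invariant under $x\mapsto kxk^T$ (using that $k\in\gO(2n)$), I can substitute $x\mapsto kxk^T$ inside the inner integral without changing its value, and then average over $k\in K$ with respect to the Haar measure $d^*k$. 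This pulls a factor $\int_K d^*k\,\Phi(s;gkxk^Tg^T)$ out of the $x$-integrand.

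At this point Lemma~\ref{lem:fact.spher}, specifically Eq.~\eqref{factorization}, applies directly and replaces the $K$-average by $\Psi(s;g)\Phi(s;x)$. The double integral then factorizes into
\begin{equation*}
\left(\int_G \frac{dg}{(\det gg^T)^{(2n-1)/2}}P_G(g)\Psi(s;g)\right)\left(\int_H \frac{dx}{(\det x)^{(2n-1)/2}}P_H(x)\Phi(s;x)\right),
\end{equation*}
which by Definition~\ref{def:spher.trans} is exactly $\mathcal{S}_{\Psi}P_G(s)\mathcal{S}_{\Phi}P_H(s)$.

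The main obstacle is really just the bookkeeping around the invariant measure: one must check that the factor $(\det y)^{-(2n-1)/2}$ in the spherical transform and the factor $(\det gg^T)^{-(2n-1)/2}$ in the convolution combine so that the change of variable $y=gxg^T$ leaves behind the clean measure $dx/(\det x)^{(2n-1)/2}$ on $H$, with no residual $g$-dependence. All remaining steps are formal once one has Lemma~\ref{lem:fact.spher} in hand. One should also keep in mind the caveat that $\Phi(s;\cdot)$ is only absolutely integrable in the regime ${\rm Re}\,(s_j-s_{j+1})\ge 2$, so the identity is first established there and extended to the full domain of $\mathcal{S}_\Phi[P_G\circledast P_H]$ by the same Carlson-type analytic continuation argument already invoked in the proof of Lemma~\ref{lem:fact.spher}.
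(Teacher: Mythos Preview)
Your proposal is correct and follows essentially the same route as the paper's proof: write out $\mathcal{S}_\Phi[P_G\circledast P_H]$, substitute $y=gxg^T$ (whose Jacobian cancels the $(\det gg^T)^{(2n-1)/2}$ factors as you describe), insert and average an auxiliary $k\in K$ using the $K$-invariance of $P_H$, and then invoke Eq.~\eqref{factorization}. The paper is slightly terser---it simply says ``for suitable $s$'' rather than spelling out the analytic continuation---but your added remarks on the measure bookkeeping and the Carlson-type extension are accurate elaborations rather than a different argument.
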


\begin{proof}
The spherical transform of the convolution $P_G\circledast P_H$ is explicitly given as
\begin{equation}
\mathcal{S}_{\Phi}[P_G\circledast P_H](s)=\int_H \frac{dx}{(\det x)^{(2n-1)/2}} \left(\int_G \frac{dg}{(\det gg^T)^{(2n-1)/2}} P_G(g)P_H(g^{-1}x(g^{-1})^T)\right)\Phi(s; x).
\end{equation}
The integrals are absolutely integrable for suitable $s$ allowing the integrals over $g$ and $x$
to be interchanged. Then, we substitute $x\to gxg^T$ and have
\begin{equation}
\mathcal{S}_{\Phi}[P_G\circledast P_H](s)=\int_G \frac{dg}{(\det gg^T)^{(2n-1)/2}}\int_H \frac{dx}{(\det x)^{(2n-1)/2}}  P_G(g)P_H(x)\Phi(s; gxg^T).
\end{equation}
Since $P_H$ is $K$-invariant we can introduce an orthogonal matrix $k\in K$ by $x\to kxk^T$ and integrate over it via the Haar measure on $K$. Hence we have
\begin{equation}
\begin{split}
\mathcal{S}_{\Phi}[P_G\circledast P_H](s)=&\int_G \frac{dg}{(\det gg^T)^{(2n-1)/2}}\int_H \frac{dx}{(\det x)^{(2n-1)/2}}  P_G(g)P_H(x)\int_K d^*k\Phi(s; gkxk^Tg^T)\\
\overset{{\rm Eq.}~\eqref{factorization}}{=}&\int_G \frac{dg}{(\det gg^T)^{(2n-1)/2}}\int_H \frac{dx}{(\det x)^{(2n-1)/2}}  P_G(g)P_H(x)\Psi(s;g)\Phi(s; x).
\end{split}
\end{equation}
The integrals now factorize and yield the claim.
\end{proof}

Another ingredient is needed, namely the inversion of the spherical transform $\mathcal{S}_{\Phi}$ which indeed exists. This is not the case for the spherical transform $\mathcal{S}_{\Psi}$, as can be
seen by considering the $2\times 2$ matrix case ($n=1$). The spherical transforms $\mathcal{S}_{\Phi}$ and $\mathcal{S}_{\Psi}$ are essentially Mellin transforms of the determinants in this case. This is not a problem for $\mathcal{S}_{\Phi}$ because $H$ is a one dimensional space. However $G$ consists of matrices which have two singular values and thus two invariants, the trace and the determinant. The information on the trace is completely lost after the spherical transformation $S_\Psi$. In the following we prove that the inverse of $S_\Phi$ exists.

\newpage
\begin{lemma}[Inverse of $S_\Phi$]\label{lem:inv}\

Let $P_H\in L^{1,K}(H)$, $p_A=\mathcal{I}P_H\in L^{1,\mathbb{S}}(A)$ and define the auxiliary function
\begin{equation}\label{xi.def}
\zeta(z)=\frac{\cos(z)}{\prod_{k=1}^n[1-4z^2/(\pi(2k-1))^2]}.
\end{equation}
The spherical transform $\mathcal{S}_{\Phi}: L^{1,K}(H)\rightarrow \mathcal{S}_{\Phi}(L^{1,K}(H))$ is injective and, hence, invertible and the inverse has the explicit form
\begin{equation}\label{Strafo:as:inv}
\begin{split}
p_A(a)=\mathcal{S}_{\Phi}^{-1}[\mathcal{S}_{\Phi}p_A](a)=&\frac{\Delta_n(a^2)}{(n!)^2 \prod_{j=0}^{n-1}2^{j}j!}\lim_{\epsilon\to0}\int_{\mathbb{R}^n} \frac{ds}{(2\pi)^{n}}\mathcal{S}_{\Phi}P(\{\imath s_j+3n-2j\}_{j=1,\ldots,n})\prod_{l=1}^n\zeta(\epsilon s_l)\\
&\times\Delta_n(\{\imath s_j+3n-2j\}_{j=1,\ldots,n})\det[a_c^{-\imath s_b-2n+2b-1}]_{b,c=1,\ldots,n}
\end{split}
\end{equation}
for almost all $a\in A$.
\end{lemma}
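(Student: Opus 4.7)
The plan is to verify the inversion formula by direct substitution, which reduces the problem to a distributional identity for the regularizer $\zeta$; injectivity of $\mathcal{S}_\Phi$ then follows as an immediate corollary. My first step is to rewrite the forward transform \eqref{Strafo:as} as a multi-dimensional Mellin transform. Since $p_A(a)$ is symmetric and $\Delta_n(a^2)$ is antisymmetric, the ratio $h(a):=p_A(a)/\Delta_n(a^2)$ is antisymmetric in the $a_b$, and combining this with the antisymmetry of $\det[a_c^{s_b+n-1}]$ collapses the determinant to a product, yielding
\[
\Delta_n(s)\,\mathcal{S}_\Phi p_A(s)=n!\Bigl(\prod_{j=0}^{n-1}2^j j!\Bigr)\int_{\mathbb{R}_+^n}h(a)\prod_{b=1}^n a_b^{s_b-n}\,da.
\]
This is an ordinary multi-dimensional Mellin transform at Mellin variables $\sigma_b=s_b-n+1$. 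The definition of $L^1_n(\mathbb{R}_+)$ in \eqref{L1.def.b} guarantees absolute convergence on the shifted contour $\mathrm{Re}\,s_b=3n-2b$ (equivalently $\sigma_b\in\{1,3,\dots,2n-1\}$) which is the contour chosen in the statement.

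Next I plug the expanded $\mathcal{S}_\Phi p_A(\imath s+3n-2\cdot)$ into the right-hand side of \eqref{Strafo:as:inv}. The factor $\Delta_n(\imath s+3n-2\cdot)$ in the putative inverse cancels the $1/\Delta_n$ that Theorem~\ref{thm:spher.func} supplies inside $\mathcal{S}_\Phi$, and for $\epsilon>0$ the regularized integrand is absolutely integrable so Fubini allows the $s$- and $a'$-integrals to be exchanged. Expanding both determinants as permutation sums over $\pi$ and $\sigma$ isolates the $s$-dependence into a factor $\prod_b(a'_{\pi(b)}/a_{\sigma(b)})^{\imath s_b}$, so the $s$-integral factorizes into $n$ one-dimensional integrals of the form $\int_\mathbb{R}\zeta(\epsilon s)\,e^{\imath s\log(a'_{\pi(b)}/a_{\sigma(b)})}\,ds$.

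The principal technical obstacle is to establish, in the distributional sense,
\[
\lim_{\epsilon\to 0}\frac{1}{2\pi}\int_{\mathbb{R}}\zeta(\epsilon s)\,e^{\imath s u}\,ds=\delta(u).
\]
The specific $\zeta$ of \eqref{xi.def} is tailored to this: its denominator cancels the first $n$ zeros of the cosine, so $\zeta$ is entire and the $s$-contour can stay fixed without picking up residues, while the decay of $\hat\zeta$ is chosen to dominate the polynomial growth in $s$ of the two determinants. Combined with $\zeta(0)=1$, this makes $\zeta(\epsilon\cdot)$ an approximate identity with Fourier limit $2\pi\delta$, and dominated convergence then legitimises pulling the $\epsilon\to 0$ limit through the outer $a'$-integration.

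Once the delta functions are in place they force $a'_c=a_{\tau(c)}$ with $\tau=\sigma\pi^{-1}$, picking up a Jacobian $a'_{\pi(b)}$ from the change of variable $\delta(\log\cdot)$. The residual powers of $a$ combine to produce exactly $(\det a)^{2n-1}$, cancelling the weight in the $a'$-integrand; the $\pi$-sum contributes a factor $n!$ because the summand depends only on $\tau$ through $\mathrm{sgn}\,\pi\,\mathrm{sgn}\,\sigma=\mathrm{sgn}\,\tau$; and the $\tau$-sum—using the symmetry of $p_A$ together with the antisymmetry of $\Delta_n(a^2)$—contributes a further $n!$ copies of $p_A(a)/\Delta_n(a^2)$. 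The combinatorial prefactor $n!\cdot n!/(n!)^2=1$ leaves precisely $p_A(a)$, which proves the inversion identity. Injectivity of $\mathcal{S}_\Phi$ follows at once: if $\mathcal{S}_\Phi p_A\equiv 0$ on the stated contour, the formula returns $p_A=0$ almost everywhere.
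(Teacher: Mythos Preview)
Your overall strategy mirrors the paper's: substitute the forward transform into the putative inverse, cancel the Vandermonde factors, reduce the $s$-integral to one-dimensional Fourier integrals against the mollifier $\zeta(\epsilon\,\cdot)$, and identify the resulting kernel with an approximate identity. Where you expand both determinants over $\mathbb{S}_n\times\mathbb{S}_n$, the paper instead applies Andr\'eief's identity after exchanging the $s$- and $\tilde a$-integrals, obtaining a single determinant $\det\big[(\tilde{a}_c/a_b)^{2n-2b}\,\epsilon^{-1}\mathcal{F}\zeta(\epsilon^{-1}\ln(\tilde{a}_c/a_b))\big]$; this is essentially the same computation packaged differently.

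There is, however, a genuine gap at the limiting step. You write that ``dominated convergence then legitimises pulling the $\epsilon\to 0$ limit through the outer $a'$-integration.'' Dominated convergence does not apply here: the kernels $\epsilon^{-1}\mathcal{F}\zeta(\cdot/\epsilon)$ are an approximate identity and are \emph{not} bounded by a fixed integrable function as $\epsilon\to 0$. The paper closes this gap by computing $\mathcal{F}\zeta$ explicitly,
\[
\mathcal{F}\zeta(u)=c\,\Theta(1-u^2)\cos^{2n-1}\!\Big(\tfrac{\pi u}{2}\Big),
\]
so that its compact support confines the $\tilde a$-integration to the shrinking box $\prod_j[a_je^{-\epsilon},a_je^{\epsilon}]$. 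On that box (for non-degenerate $a$ and $\epsilon$ small) $1/\Delta_n(\tilde a^2)$ is uniformly bounded, the determinant can be expanded safely, and the substitution $u_l=\epsilon^{-1}\ln(\tilde a_l/a_l)$ reduces the limit to the $L^1$ Lebesgue-point argument of \cite[Lemma~2.6]{KK16a}. Your sketch needs either this explicit evaluation of $\mathcal{F}\zeta$ or an appeal to Paley--Wiener ($\zeta$ is entire of exponential type $1$ and $O(|s|^{-(2n-1)})$ on $\mathbb{R}$, hence $\mathrm{supp}\,\mathcal{F}\zeta\subset[-1,1]$), together with the $L^1$ approximate-identity theorem in place of dominated convergence.

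A smaller point: the appeal to $L^1_n(\mathbb{R}_+)$ from \eqref{L1.def.b} is misplaced, since $p_A$ here is a general element of $L^{1,\mathbb S}(A)$, not assumed to arise from a polynomial ensemble. The paper obtains absolute integrability of the forward transform on the chosen contour directly, by noting that $\det[\tilde a_c^{\,\imath s_b+2n-2b}]/\Delta_n(\tilde a^2)$ is bounded in $\tilde a$ (the zeros of the denominator at coincident $\tilde a_j$ are cancelled by the numerator) while $p_A\in L^1$. Your rewriting via $h(a)=p_A(a)/\Delta_n(a^2)$ discards this cancellation, and the individual permutation terms need not be absolutely integrable over $A$; keep the ratio $\det[\cdot]/\Delta_n$ intact until the $s$-integral has localised the domain.
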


As in~\cite{KK16b}, the regularizing function $\zeta$ can be omitted for the cases when the spherical transform $\mathcal{S}_{\Phi}p_A$ decays sufficiently fast.

\begin{proof}
We only need to show that Eq.~\eqref{Strafo:as:inv} holds for any $p_A\in L^{1,\mathbb{S}}(A)$ and when the singular values $a\in A$ are non-degenerate. The inverse of the spherical transform $\mathcal{S}_{\Phi}$ of $\mathcal{S}_{\Phi}p_A$ is explicitly given as
\begin{multline}
\mathcal{S}_{\Phi}^{-1}[\mathcal{S}_{\Phi}p_A](a) \\
=  \frac{\Delta_n(a^2)}{(n!)^2}\lim_{\epsilon\to0}\int_{\mathbb{R}^n} \frac{ds}{(2\pi)^{n}}\prod_{l=1}^n\zeta(\epsilon s_l)\left[\int_A\frac{d\tilde{a}}{(\det \tilde{a})^{2n-1}} p_A(\tilde{a})\frac{\det[\tilde{a}_c^{\imath s_b+4n-2b-1}]_{b,c=1,\ldots,n}}{\Delta_n(\tilde{a}^2)\Delta_n(\{\imath s_j+3n-2j\}_{j=1,\ldots,n})}\right]\\
\times\Delta_n(\{\imath s_j+3n-2j\}_{j=1,\ldots,n})\det[a_c^{-\imath s_b-2n+2b-1}]_{b,c=1,\ldots,n}.
\end{multline}
Here the integral over $\tilde{a}$ is absolutely integrable because $p_A$ is an $L^1$-function while the
term $\det[\tilde{a}_c^{\imath s_b+2n-2j}]_{b,c=1,\ldots,n}/\Delta_n(\tilde{a}^2)$ is bounded in $\tilde{a}\in A$. The latter can be seen by noticing that its modulus is homogeneous in $\tilde{a}$ of order zero and that the poles of the denominator at the points where $\tilde{a}$ degenerates are compensated by the numerator. Also the integral over $s$ is absolutely integrable due to the regularization $\prod_{l=1}^n\zeta(\epsilon s_l)$. Note that spherical transform $\mathcal{S}_{\Phi}P(\{\imath s_j+3n-2j\}_{j=1,\ldots,n})$ is bounded on $s\in\mathbb{R}^n$. Hence we can interchange the integrals but the limit $\epsilon\to0$ stays in front of both integrals.

We cancel some terms in the numerator with those in the denominator and find
\begin{multline}
\mathcal{S}_{\Phi}^{-1}[\mathcal{S}_{\Phi}p_A](a)= \frac{\Delta_n(a^2)}{(n!)^2\det a}\lim_{\epsilon\to0}\int_A d\tilde{a} \\
\times
\int_{\mathbb{R}^n} \frac{ds}{(2\pi)^{n}}p_A(\tilde{a})\frac{\det[\tilde{a}_c^{\imath s_b+2n-2b}]_{b,c=1,\ldots,n}}{\Delta_n(\tilde{a}^2)}\det[a_c^{-\imath s_b-2n+2b}]_{b,c=1,\ldots,n}\prod_{l=1}^n\zeta_1(\epsilon s_l).
\end{multline}
The integral over $s$ can be performed by Andr\'eief's identity~\cite{An83},
\begin{equation}
\begin{split}
\mathcal{S}_{\Phi}^{-1}[\mathcal{S}_{\Phi}p_A](a)=& \lim_{\epsilon\to0}\frac{\Delta_n(a^2)}{n!\det a}\int_A d\tilde{a} \, p_A(\tilde{a})\frac{\det[(\tilde{a}_c/a_b)^{2n-2b}\mathcal{F}\zeta(\epsilon^{-1}{\rm ln}[\tilde{a}_c/a_b])/\epsilon]_{b,c=1,\ldots,n}}{\Delta_n(\tilde{a}^2)},
\end{split}
\end{equation}
where the inverse Fourier transform of the regularizing function~\eqref{xi.def} is
\begin{equation}
\mathcal{F}\zeta(u)=\frac{1}{2\pi}\int_{-\infty}^\infty dz \, \zeta(z) e^{-\imath z u}=c\Theta(1-u^2)\cos^{2n-1}\left(\frac{\pi u}{2}\right),
\end{equation}
where $\Theta(x)$ denotes the Heaviside step function.
The exact value of the constant $c\neq 0$ is not important. It only correctly normalizes $\mathcal{F}\zeta$ because $\zeta(0)=1$.

Due to the regularization the integration domain shrinks to $A_\epsilon=(\bigcup_{j=1}^n[a_je^{-\epsilon},a_je^{\epsilon}])^n$. We recall that the singular values $a\in A$ are chosen to be non-degenerate. Therefore there is an $\epsilon_0>0$ such that $1/\Delta_n(\tilde{a}^2)$ has no poles for all $\epsilon\leq\epsilon_0$, in particular it is uniformly bounded on $A_{\epsilon_0}$ and, thus, on $A_\epsilon\subset A_{\epsilon_0}$ for any $\epsilon\leq\epsilon_0$. Hence we may expand the determinant in the numerator and get a factor $n!$ due to the symmetry of the integrand, 
\begin{equation}
\begin{split}
\mathcal{S}_{\Phi}^{-1}[\mathcal{S}_{\Phi}p_A](a)=& \lim_{\epsilon\to0}\frac{\Delta_n(a^2)}{\det a}\prod_{l=1}^n\int_{a_le^{-\epsilon}}^{a_le^{\epsilon}} d\tilde{a}_l\ \frac{p_A(\tilde{a})}{\Delta_n(\tilde{a}^2)}\left(\prod_{j=1}^n\left(\frac{\tilde{a}_j}{a_j}\right)^{2n-2j}\frac{c}{\epsilon}\cos^{2n-1}\left(\frac{\pi }{2\epsilon}{\rm ln}\left[\frac{\tilde{a}_j}{a_j}\right]\right)\right).
\end{split}
\end{equation}
Substituting $u_l={\rm ln}(\tilde{a}_l/a_l)^{1/\epsilon}$ then gives
\begin{equation}
\begin{split}
\mathcal{S}_{\Phi}^{-1}[\mathcal{S}_{\Phi}p_A](a)=&\Delta_n(a^2) \lim_{\epsilon\to0}\prod_{l=1}^n\int_{-1}^{1} du_l\ \frac{p_A(\{a_je^{\epsilon u_j}\}_{j=1,\ldots,n})}{\Delta_n(\{a_j^2e^{2\epsilon u_j}\}_{j=1,\ldots,n})}\left(\prod_{j=1}^n c\,\cos^{2n-1}\left(\frac{\pi u_j}{2}\right)e^{\epsilon(2n-2j+1) u_j}\right).
\end{split}
\end{equation}
We now employ the fact that $p_A$ is an $L^1$-function on $A$. The integrand is therefore an $L^1$-function on $[-1,1]^n$. With the same arguments as in the proof of~\cite[Lemma~2.6]{KK16a} we have
\begin{equation}
\begin{split}
\mathcal{S}_{\Phi}^{-1}[\mathcal{S}_{\Phi}p_A](a)=&\Delta_n(a^2) \prod_{l=1}^n\int_{-1}^{1} du_l\ \frac{p_A(\{a_j\}_{j=1,\ldots,n})}{\Delta_n(\{a_j^2\}_{j=1,\ldots,n})}\left(\prod_{j=1}^n c\,\cos^{2n-1}\left(\frac{\pi u_j}{2}\right)\right)=p_A(a)
\end{split}
\end{equation}
for those $a\in A$ which satisfy
\begin{equation}
\lim_{\epsilon\to0}\prod_{l=1}^n\int_{[-1,1]^n} d\tilde{a} |p_A(a+\epsilon\tilde{a})-p_A(a)|=0
\end{equation}
which are almost all. This completes the proof.
\end{proof}

Now we are ready to formulate the theorems about the $G={\rm Gl}_{\mathbb{R}}(2n)$ convolution on $H={\rm o}(2n)$.

\section{General Results for Products involving Polynomial Ensembles on $H$}\label{sec:gen}

We first derive the jPDF's of the singular values for the product matrix $gxg^T$ with $g\in G$ a factorizing ensemble and $x\in H$ either a fixed matrix or drawn from a polynomial ensemble on $H$. This is done in subsection~\ref{sec:jPDF}. We additionally bring it into relation to P\'olya ensembles on ${\rm Gl}_{\mathbb{C}}(n)$, see~\cite{FKK17} for the definition, when $x$ is chosen to be the fixed matrix $\imath \eins_n\otimes\tau_2$.  In subsection~\ref{sec:kernel} we derive the bi-orthogonal systems corresponding to the two kinds of settings. These results are the counterparts to those derived in~\cite{Ki17} for some additive convolutions.

\subsection{Joint Probability Densities}\label{sec:jPDF}

We first calculate the spherical transforms of the two kinds of ensembles in Definition~\ref{def:poly.ens}. For this purpose we recall the univariate Mellin transformation of a function $f\in L^1(\mathbb{R}_+)$,
\begin{equation}\label{Mellin:def}
\mathcal{M}f(s)=\int_0^\infty f(a)a^{s-1} \, da,
\end{equation}
and we introduce the induced density of the modulus of the determinant for a density $h\in L^{1}_{\rm Prob}({\rm Gl}_{\mathbb{R}}(2))$,
\begin{equation}\label{average.def}
\mathcal{A}h(a)=\int_{{\rm Gl}_{\mathbb{R}}(2)}dz \, h(z)\delta(a-\sqrt{\det zz^T}).
\end{equation}
Let us emphasize that $\mathcal{A}h$ is an $L^1$-function on $\mathbb{R}_+$ and so has a well
defined Mellin transformation~\eqref{Mellin:def}. Moreover we note that the sets~\eqref{L1.def.b}, which were used for Definition~\ref{def:poly.ens} of the considered ensembles, are subsets of the $L^1$-functions on $\mathbb{R}_+$ and on ${\rm Gl}_{\mathbb{R}}(2)$, respectively. Thus the transforms are well-defined for the weights associated with these ensembles.

\begin{proposition}[Spherical Transforms of the Ensembles of Definition~\ref{def:poly.ens}]\label{prop:spher.pol}\
\begin{enumerate}
\item	Let $P_H\in L_{\rm Prob}^{1,K}(H)$ be the distribution of a polynomial ensemble on $H$ associated 
with the weights $w_1,\ldots, w_n\in L^1_n(\mathbb{R}_+)$, cf. Eq.~\eqref{poly.def}. The spherical transform of $P_H$ is
			\begin{equation}\label{spher.pol}
			\mathcal{S}_\Phi P_H(s)=n!\left(\prod_{j=0}^{n-1}2^{j}j!\right)C_n[w]\frac{\det[\mathcal{M}w_c(s_b-n+1)]_{b,c=1,\ldots,n}}{\Delta_n(s)}.
			\end{equation}
			The normalization constant is
			\begin{equation}
			\frac{1}{C_n[w]}=n!\det[\mathcal{M}w_c(2b-1)]_{b,c=1,\ldots,n}.
			\end{equation}
\item	Let $P_G \in L_{\rm Prob}^{1,K}(G)$ be the distribution of a polynomial ensemble on $G$ associated with the probability density $\sigma\in L^{1}_n({\rm Gl}_{\mathbb{R}}(2))$. The spherical transform of $P_G$ is given as
			\begin{equation}\label{spher.fac}
			\mathcal{S}_\Psi P_G(s)=\prod_{j=1}^n\frac{\mathcal{M}\circ\mathcal{A}\sigma(s_j-n+1)}{\mathcal{M}\circ\mathcal{A}\sigma(2j-1)}.
			\end{equation}
\end{enumerate}
\end{proposition}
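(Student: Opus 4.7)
The plan is to prove Part 1 directly using the explicit form of $\Phi$ from Theorem~\ref{thm:spher.func} together with Andr\'eief's identity, and to handle Part 2 indirectly via the Schur-like decomposition $g=ktzk^T$ combined with the factorizing structure of $P_G$.

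For Part 1, I would substitute the explicit formula for $\Phi(s;\imath a\otimes\tau_2)$ from Theorem~\ref{thm:spher.func} into the second expression for $\mathcal{S}_\Phi P_H$ in Eq.~\eqref{Strafo:as}. The factor $\Delta_n(a^2)$ from the polynomial-ensemble jPDF $p_A(a)=C_n[w]\Delta_n(a^2)\det[w_b(a_c)]$ cancels the identical factor in the denominator of $\Phi$, leaving a product of two determinants, $\det[w_b(a_c)]$ and $\det[a_c^{s_b+n-1}]$, weighted by $\prod_l a_l^{-(2n-1)}\,da_l$. Andr\'eief's identity combines these into a single $n\times n$ determinant whose $(b,c)$-entry is $\int_0^\infty a^{s_b-n}w_c(a)\,da=\mathcal{M}w_c(s_b-n+1)$, with an overall factor of $n!$, producing Eq.~\eqref{spher.pol}. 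The normalization follows by specializing to $s^*=(3n-2j)_{j=1,\ldots,n}$, at which $\mathcal{S}_\Phi P_H(s^*)=1$: direct computation of $\Delta_n(s^*)=(-2)^{n(n-1)/2}\prod_{j=1}^{n-1}j!$, together with a row-reversal $b\to n+1-b$ in the determinant $\det[\mathcal{M}w_c(s^*_b-n+1)]$ (contributing a matching sign $(-1)^{n(n-1)/2}$) and the identity $\prod_{j=0}^{n-1}2^j j!=2^{n(n-1)/2}\prod_{j=1}^{n-1}j!$, reduces the condition to $1=n!\,C_n[w]\,\det[\mathcal{M}w_c(2b-1)]$.

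For Part 2, since $\Psi$ has no closed form comparable to that of $\Phi$, the strategy is less direct. I would first interchange the $K$-integration inside $\Psi(s;g)$ with the $g$-integration and substitute $g\to k^Tg$; by the left $K$-invariance of $P_G$ and the identity $\Pi_{2j,2n}k(k^Tg)(k^Tg)^Tk^T\Pi_{2j,2n}^T=\Pi_{2j,2n}gg^T\Pi_{2j,2n}^T$, the outer $K$-integration trivializes and reduces $\mathcal{S}_\Psi P_G(s)$ to the single integral of $P_G(g)\prod_j[\det\Pi_{2j,2n}gg^T\Pi_{2j,2n}^T]^{(s_j-s_{j+1})/2-1}/(\det gg^T)^{(2n-1)/2}$ over $G$. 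Next, applying the change of variables $g=ktzk^T$ using the Jacobian implicit in Eq.~\eqref{jPDF.Z} and the bi-$K$-invariance $P_G(ktzk^T)=P_G(tz)$, then reabsorbing the freed $K$-integration as the definition of $\Psi(s;tz)$, yields
\begin{equation*}
\mathcal{S}_\Psi P_G(s)=\int_Z dz\,\tilde J(z)\,(\det zz^T)^{-(2n-1)/2}\int_T dt\,P_G(tz)\,\Psi(s;tz),
\end{equation*}
with $\tilde J(z)=C_*\prod_{i<l}|\det(z_l\otimes\eins_2-\eins_2\otimes z_i)|\prod_j\det(z_jz_j^T)^{n-j}$.

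The principal obstacle is then the joint evaluation of the $(t,K)$-averaging inside $\Psi(s;tz)$. Using the QR-trick from the proof of Lemma~\ref{lem:fact.spher}, each factor $[\det\Pi_{2j,2n}K(tz)(tz)^TK^T\Pi_{2j,2n}^T]^{(s_j-s_{j+1})/2-1}$ can be rewritten as a product of $\det(R_{ll}R_{ll}^T)$ for $l\le j$, where $R_{ll}$ are the $2\times 2$ diagonal blocks of the block-lower-triangular QR factor of $K\cdot tz$; the telescoping $\sum_{j\ge l}[(s_j-s_{j+1})/2-1]=(s_l+2l-n-1)/2$, using $s_{n+1}=-n-1$, fixes the exponent of each $\det(R_{ll}R_{ll}^T)$. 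Combining the resulting expression with the factorizing identity $\int_T dt\,P_G(tz)\,\prod_j\det(z_jz_j^T)^{n-j}=\prod_j\sigma(z_j)$ and the conjugation-${\rm O}(2)$-invariance of $\sigma$ that follows from bi-$K$-invariance of $P_G$, the integrand is expected to collapse to $n$ independent factors in the $z_l$'s, with the Vandermonde in $\tilde J$ cancelling against the Jacobian of the ${\rm O}(2)^n$-part of the $z$-parametrisation. Each resulting factor then evaluates to $\int_{{\rm Gl}_{\mathbb R}(2)}dz\,\sigma(z)(\det zz^T)^{(s_j-n)/2}=\mathcal{M}\!\circ\!\mathcal{A}\sigma(s_j-n+1)$. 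The denominators $\mathcal{M}\!\circ\!\mathcal{A}\sigma(2j-1)$ in Eq.~\eqref{spher.fac} arise as normalization from $\mathcal{S}_\Psi P_G(s^*)=1$ at $s^*_j=3n-2j$ (so that $(s^*_j-n)/2=n-j$), after reindexing $j\to n+1-j$.
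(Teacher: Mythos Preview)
Your Part~1 is correct and is exactly the paper's argument: substitute the explicit form of $\Phi$ from Theorem~\ref{thm:spher.func}, cancel $\Delta_n(a^2)$, apply Andr\'eief, and read off the normalisation at $s_j=3n-2j$.

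Your Part~2, however, has a genuine gap. After the first step (absorbing the $K$-average from $\Psi$ into $g$, which is fine and is also what the paper does), you choose the Schur-type decomposition $g=ktzk^T$ and then reconstitute the freed $k$-integral as $\Psi(s;tz)$. The problem is that $\Psi(s;tz)$ genuinely depends on $t$: $\Psi$ is a function of the singular values of its argument, and the singular values of $tz$ change with $t$ (already for $n=2$). Hence you cannot pull the factor $\Psi(s;tz)$ out of the $t$-integral to invoke the factorising identity $\int_T dt\,P_G(tz)\prod_j\det(z_jz_j^T)^{n-j}=\prod_j\sigma(z_j)$. The subsequent ``QR-trick'' paragraph does not resolve this: the diagonal blocks $R_{ll}$ of the block-QR factor of $K\cdot tz$ depend jointly on $K$ and $t$, and there is no mechanism in your sketch that makes this dependence disappear. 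The phrase ``is expected to collapse'' is precisely where the argument stops being a proof. The claimed cancellation of the generalised Vandermonde $\prod_{k<l}|\det(z_l\otimes\eins_2-\eins_2\otimes z_k)|$ against an ``${\rm O}(2)^n$-part of the $z$-parametrisation'' is likewise unjustified; no such Jacobian factor appears.

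The paper avoids all of this by using a different decomposition after the same first step: write $g=tz\tilde{k}$ (block-QR, with $\tilde{k}\in K$ on the right only). Then $gg^T=tzz^Tt^T$ is independent of $\tilde{k}$, and because $t$ is block lower unitriangular one has directly
\[
\det\Pi_{2j,2n}\,tzz^Tt^T\,\Pi_{2j,2n}^T=\prod_{l=1}^j\det z_lz_l^T,
\]
which is \emph{independent of $t$}. The $t$-integration then acts only on $P_G(tz)$, producing $\tilde p_Z(z)/\prod_j\det(z_jz_j^T)^{n-j}$, and the factorisation $\tilde p_Z(z)=\prod_j\sigma(z_j)$ finishes the computation. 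The Jacobian of $g=tz\tilde{k}$ is $\prod_j(\det z_jz_j^T)^{2(n-j)}$, not the Schur-type Jacobian of Eq.~\eqref{jPDF.Z}, so no Vandermonde factor needs to be cancelled. If you replace your second-step decomposition by $g=tz\tilde{k}$, the rest of your outline (the telescoping of exponents, the identification with $\mathcal{M}\circ\mathcal{A}\sigma$, and the normalisation at $s_j=3n-2j$) goes through.
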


\begin{proof}\
\begin{enumerate}
\item	Let $p_A=\mathcal{I}P_H$. Substitute the jPDF of the singular values $a\in A$ into the second line of Eq.~\eqref{Strafo:as}. The Vandermonde determinant $\Delta_n(a^2)$ cancels and the remaining integrand is absolutely integrable for suitable $s\in\mathbb{C}^n$. We apply Andr\'eief's identity~and find
			\begin{equation}
			\mathcal{S}_\Phi P_H(s)=n!\left(\prod_{j=0}^{n-1}2^{j}j!\right)C_n[w]\frac{\det[\int_0^\infty da_c \, w_c(a_c)a_c^{s_b-n}]_{b,c=1,\ldots,n}}{\Delta_n(s)}.
			\end{equation}
			The integral in the determinant can be identified with the Mellin transformation~\eqref{Mellin:def}. The normalization constant can be obtained by using $\mathcal{S}_\Phi P_H(\{3n-2j\}_{j=1,\ldots,n})=\int_H dx \, P_H(x)=1$.
\item	We start from the formula~\eqref{Strafo:s} and substitute the explicit form~\eqref{spher:s} of the spherical function $\Psi$ into this formula,
			\begin{equation}
			\mathcal{S}_\Psi P_G(s)=\int_G \frac{dg}{(\det gg^T)^{(2n-1)/2}}P_G(g)\int_{K}d^*k\prod_{j=1}^{n} [\det \Pi_{2j,2n}kgg^Tk^T\Pi_{2j,2n}^T]^{(s_j-s_{j+1})/2-1}.
			\end{equation}
			For suitable $s\in\mathbb{C}^n$ both integrals over $g$ and $k$ are absolutely integrable and can be interchanged. Then we can absorb the orthogonal matrix $k$ in $g$ due to the $K$-invariance of $P_G$, i.e.
			\begin{equation}
			\mathcal{S}_\Psi P_G(s)=\int_G dgP_G(g) (\det gg^T)^{(s_n-n)/2}\prod_{j=1}^{n-1} [\det \Pi_{2j,2n}gg^T\Pi_{2j,2n}^T]^{(s_j-s_{j+1})/2-1}.
			\end{equation}
			In the next step we perform the decomposition of $g=t	 z\tilde{k}$ with a lower triangular $2\times 2$ block uni-modular matrix $t\in T$, a real $2\times 2$ block diagonal matrix $z=\diag(z_1,\ldots,z_n)\in Z$ and an orthogonal matrix $\tilde{k}\in K$. Indeed such a decomposition exists since we can first do a QR-decomposition $g=\tilde{t} \tilde{k}$ with $\tilde{t}=tz$ a lower $2\times2$ block diagonal matrix. Then we factor the matrix $\tilde{t}$ by substituting the strictly lower diagonal blocks $\tilde{t}_{ij}$ as $\tilde{t}_{ij}\to \tilde{t}_{ij}z_j$. The Jacobian can also be calculated via these two steps. The QR-decomposition yields the factor $\prod_{j=1}^n(\det z_jz_j^T)^{n-j}$; to apply~\cite[Eq.~(3.6)]{ER05} one needs to apply the QR decomposition for ${\rm Gl}_{\mathbb{R}}(2n)$ and then revert it for the $2\times 2$ blocks $z_j$. The substitution contributes another factor $\prod_{j=1}^n(\det z_jz_j^T)^{n-j}$. Hence, we obtain
			\begin{multline}
			\mathcal{S}_\Psi P_G(s)\propto\int_Z dz\int_T dt\prod_{j=1}^n(\det z_jz_j^T)^{2(n-j)} \\
			\times P_G(tz) (\det tzz^Tt^T)^{(s_n-n)/2}\prod_{j=1}^{n-1} [\det \Pi_{2j,2n}tzz^Tt^T\Pi_{2j,2n}^T]^{(s_j-s_{j+1})/2-1},
			\end{multline}
			where we omitted the normalization constant. Note that the orthogonal matrix $\tilde{k}$ drops out as a consequence of the $K$-invariance. Due to the block triangular form and the uni-modularity of $t$, the determinants can be readily evaluated as
			\begin{equation}
			\det \Pi_{2j,2n}tzz^Tt^T\Pi_{2j,2n}^T=\prod_{l=1}^j \det z_jz_j^T.
			\end{equation}
			Collecting the terms in the integral, we have
			\begin{multline}
			\mathcal{S}_\Psi P_G(s)\propto\int_Z dz\int_T dt\left(\prod_{j=1}^n(\det z_jz_j^T)^{(s_j-n)/2+n-j}\right) P_G(tz) \\
			=\int_Z dz\left(\prod_{j=1}^n(\det z_jz_j^T)^{(s_j-n)/2}\right) \tilde{p}_Z(z),
			\end{multline}
			where we used the definition~\eqref{T.def}, i.e. $\tilde{p}_Z(z)=\mathcal{T}P_G(z)$. For a factorizing ensemble the function $\tilde{p}_Z$ has the form $\tilde{p}_Z(z)=\prod_{j=1}^n\sigma(z_j)$ with $\sigma\in L^{1}_n({\rm Gl}_{\mathbb{R}}(2))$. Hence the integrals factorize into integrals over each single $2\times2$ real matrix $z_j$. The combination of the two integrals~\eqref{Mellin:def} and~\eqref{average.def} and the normalization when choosing $s_j=3n-2j$ yields the claim.
\end{enumerate}
\end{proof}

With the help of the univariate Mellin convolution of two functions $f,h\in L^1(\mathbb{R}_+)$,
\begin{equation}\label{mel.con}
f\circledast h(a)=\int_0^\infty \frac{d\tilde{a}} \, {\tilde{a}} f(\tilde{a})h\left(\frac{a}{\tilde{a}}\right),
\end{equation}
our goal to calculate the jPDF of the singular values of the product $gxg^T$ in our specific setting can be achieved.

\begin{corollary}[JPDF of a Factorizing Ensemble on $G$ times a Polynomial Ensemble on $H$]\label{cor:jPDF.poly}\

Let $g\in G$ be drawn from a factorizing ensemble on $G$ associated with the weight $\sigma\in L^{1}_n({\rm Gl}_{\mathbb{R}}(2))$ and $x\in H$ be drawn from a polynomial ensemble on $H$ associated with the  weights $w_1,\ldots,w_n \in L^1_n(\mathbb{R}_+)$. The product $y=gxg^T\in H$ is a polynomial ensemble on $H$ associated with the weights $\mathcal{A}\sigma\circledast w_1,\ldots,\mathcal{A}\sigma\circledast w_n\in L^1_n(\mathbb{R}_+)$. Specifically the jPDF of the singular values of $gxg^T$ is
\begin{equation}\label{jPDF.fact.poly}
p_A(a)=\frac{C_n[w]}{\prod_{j=1}^n\mathcal{M}\circ\mathcal{A}\sigma(2j-1)}\Delta_n(a^2)\det[\mathcal{A}\sigma\circledast w_b(a_c)]_{b,c=1,\ldots,n}
\end{equation}
for almost all $a\in A$.
\end{corollary}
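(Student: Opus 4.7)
The plan is to apply the factorization property of the spherical transforms from Lemma~\ref{lem:fact:trans}, substitute the explicit expressions computed in Proposition~\ref{prop:spher.pol}, and then appeal to the injectivity of $\mathcal{S}_\Phi$ established in Lemma~\ref{lem:inv}. In short, the strategy is to recognize that $\mathcal{S}_\Phi[P_G \circledast P_H](s)$ already has the structural form of the spherical transform of a polynomial ensemble, whose weights we can read off directly.

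First I would write, using Lemma~\ref{lem:fact:trans},
\begin{equation*}
\mathcal{S}_\Phi[P_G\circledast P_H](s)=\mathcal{S}_\Psi P_G(s)\,\mathcal{S}_\Phi P_H(s),
\end{equation*}
and substitute the two formulas~\eqref{spher.pol} and~\eqref{spher.fac} from Proposition~\ref{prop:spher.pol}. The scalar prefactor $\prod_{j=1}^n \mathcal{M}\!\circ\!\mathcal{A}\sigma(s_j-n+1)$ coming from $\mathcal{S}_\Psi P_G$ depends on $s_j$ only through $j$, so by multilinearity of the determinant I can pull each $\mathcal{M}\!\circ\!\mathcal{A}\sigma(s_b-n+1)$ into row $b$ of $\det[\mathcal{M}w_c(s_b-n+1)]_{b,c=1,\ldots,n}$, turning the product times the determinant into $\det[\mathcal{M}\!\circ\!\mathcal{A}\sigma(s_b-n+1)\,\mathcal{M}w_c(s_b-n+1)]_{b,c=1,\ldots,n}$. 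The Mellin convolution theorem $\mathcal{M}(f\circledast h)(s)=\mathcal{M}f(s)\,\mathcal{M}h(s)$ applied entrywise then identifies each entry with $\mathcal{M}(\mathcal{A}\sigma\circledast w_c)(s_b-n+1)$.

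Assembling these manipulations, the combined transform becomes
\begin{equation*}
\mathcal{S}_\Phi[P_G\circledast P_H](s)=\frac{n!\,\prod_{j=0}^{n-1}2^j j!\,C_n[w]}{\prod_{j=1}^n\mathcal{M}\!\circ\!\mathcal{A}\sigma(2j-1)}\cdot\frac{\det[\mathcal{M}(\mathcal{A}\sigma\circledast w_c)(s_b-n+1)]_{b,c=1,\ldots,n}}{\Delta_n(s)},
\end{equation*}
which is exactly the right-hand side of~\eqref{spher.pol} for a polynomial ensemble on $H$ with weights $\mathcal{A}\sigma\circledast w_1,\ldots,\mathcal{A}\sigma\circledast w_n$ and with normalization constant $C_n[w]/\prod_{j=1}^n\mathcal{M}\!\circ\!\mathcal{A}\sigma(2j-1)$ in place of $C_n[\mathcal{A}\sigma\circledast w]$. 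Injectivity of $\mathcal{S}_\Phi$ from Lemma~\ref{lem:inv} then forces the singular-value jPDF $p_A$ of $gxg^T$ to coincide, almost everywhere, with the polynomial-ensemble jPDF~\eqref{poly.def} built from these convolved weights, which is precisely formula~\eqref{jPDF.fact.poly}.

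The only genuinely delicate point, rather than an obstacle, is verifying that $\mathcal{A}\sigma\circledast w_c\in L^1_n(\mathbb{R}_+)$ so that the output is an admissible polynomial ensemble and the normalization identity $C_n[\mathcal{A}\sigma\circledast w]^{-1}=n!\det[\mathcal{M}(\mathcal{A}\sigma\circledast w_c)(2b-1)]$ is finite. This follows from Fubini together with the definitions in~\eqref{L1.def.b}: the Mellin transform of $\mathcal{A}\sigma$ is finite on the strip $\mathrm{Re}\,s\in[1,2n-1]$ (by the homogeneity-zero polynomial condition defining $L^1_n(\mathrm{Gl}_{\mathbb{R}}(2))$), and the same strip lies within the domain of $\mathcal{M}w_c$, so the factorized Mellin integral defining $\mathcal{M}(\mathcal{A}\sigma\circledast w_c)$ converges absolutely there. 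A minor bookkeeping check of the overall prefactor — comparing the factor $n!\prod_j 2^j j!$ produced by the factorization with the one appearing in~\eqref{spher.pol} for the output ensemble — confirms the claimed normalization, finishing the proof.
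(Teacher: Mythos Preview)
Your proof is correct and follows essentially the same route as the paper: factorize the spherical transform via Lemma~\ref{lem:fact:trans}, insert the explicit formulas from Proposition~\ref{prop:spher.pol}, absorb the $\mathcal{M}\!\circ\!\mathcal{A}\sigma$ factors into the determinant, invoke the Mellin convolution identity $\mathcal{M}[f\circledast h]=\mathcal{M}f\,\mathcal{M}h$, and then appeal to the invertibility of $\mathcal{S}_\Phi$ from Lemma~\ref{lem:inv}. Your additional remarks on the $L^1_n(\mathbb{R}_+)$-membership of $\mathcal{A}\sigma\circledast w_c$ and the normalization bookkeeping are a welcome elaboration, but do not constitute a different approach.
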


\begin{proof}
Let $P_G$ and $\tilde{P}_H$ denote the probability densities of $g$ and $x$, respectively. The probability density $P_H$ of $y=gxg^T$ is given by the convolution $P_H=P_G\circledast \tilde{P}_H$. When applying the spherical transform $\mathcal{S}_\Phi$  we have
\begin{equation}
\begin{split}
\mathcal{S}_\Phi[P_H](s)=&\mathcal{S}_\Phi[P_G\circledast \tilde{P}_H](s)\overset{{\rm Lemma}~\ref{lem:fact:trans}}{=}\mathcal{S}_\Psi P_G(s)\mathcal{S}_\Phi\tilde{P}_H(s)\\
=&n!\left(\prod_{j=0}^{n-1}\frac{2^{j}j!}{\sigma(2j+1)}\right)C_n[w]\frac{\det[\mathcal{M}\circ\mathcal{A}\sigma(s_b-n+1)\mathcal{M}w_c(s_b-n+1)]_{b,c=1,\ldots,n}}{\Delta_n(s)}.
\end{split}
\end{equation}
The Mellin transform also satisfies a factorization with respect to the Mellin convolution~\cite{Ti86},
\begin{equation}
\mathcal{M}[f\circledast h]=\mathcal{M}f\,\mathcal{M}h
\end{equation}
for any two $L^1$-functions $f,h\in L^1(\mathbb{R}_+)$. The functions $w_j$ ($j=1,\ldots,n$) and $\mathcal{A}\sigma$ are $L^1$-functions such that we find Eq.~\eqref{jPDF.fact.poly} after applying the inverse $\mathcal{S}_\Phi^{-1}$.
\end{proof}

There is a related result which can be calculated easily with the framework of Sec.~\ref{sec:spher}. Instead of choosing $x$  a random matrix, one can choose it to be a fixed matrix in $H$. 

\begin{theorem}[JPDF of a Factorizing Ensemble on $G$ times a fixed matrix in $H$]\label{thm:jPDF.fixed}\

Let $g\in G$ be drawn from a factorizing ensemble on $G$ associated with the weight $\sigma\in L^{1}_n({\rm Gl}_{\mathbb{R}}(2))$, and $x\in H$ be fixed and invertible with non-degenerate singular values $\tilde{a}\in A$. The jPDF of the singular values of $y=gxg^T\in H$ is
\begin{equation}\label{jPDF.fixed.poly}
p_A(a|\tilde{a})=\frac{1}{n!\prod_{j=1}^{n}\mathcal{M}\circ\mathcal{A}\sigma(2j-1)}\frac{\Delta_n(a^2)}{\Delta_n(\tilde{a}^2)}\det\left[\frac{1}{\tilde{a}_c}\mathcal{A}\sigma\left(\frac{a_b}{\tilde{a}_c}\right)\right]_{b,c=1,\ldots,n}
\end{equation}
for almost all $a\in A$ and, hence, $y$ is drawn from a polynomial ensemble on $H$.
\end{theorem}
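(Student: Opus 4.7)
I would run the same spherical-transform pipeline as in the proof of Corollary~\ref{cor:jPDF.poly}, but with the polynomial ensemble on $H$ replaced by the $K$-invariant orbit measure $P_{x}$ at the fixed matrix $x$. The outcome will be that $y=gxg^{T}$ is again a polynomial ensemble, whose weights are the Mellin convolutions of $\mathcal{A}\sigma$ with Dirac masses at $\tilde a_{c}$, namely $w_{c}(a)=\tilde a_{c}^{-1}\mathcal{A}\sigma(a/\tilde a_{c})$.

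First, I would compute $\mathcal{S}_{\Phi}P_{x}$ explicitly. Under the isometry $\mathcal{I}$, the measure $P_{x}$ corresponds to the symmetrised delta
\[
p_{A}^{(0)}(a)=\frac{1}{n!}\sum_{\pi\in\mathbb{S}}\prod_{j=1}^{n}\delta\bigl(a_{j}-\tilde a_{\pi(j)}\bigr)
\]
on $A$, and the integral in the second line of Eq.~\eqref{Strafo:as} collapses to an evaluation at $a=\tilde a$. Using $\det x=\prod_{j}\tilde a_{j}^{\,2}$ (which holds because $x$ is antisymmetric of even size, so $x=k(\imath\tilde a\otimes\tau_{2})k^{T}$ for some $k\in K$) together with Theorem~\ref{thm:spher.func}, this gives
\[
\mathcal{S}_{\Phi}P_{x}(s)=\frac{\Phi(s;\imath\tilde a\otimes\tau_{2})}{\prod_{j}\tilde a_{j}^{\,2n-1}}=\Bigl(\prod_{j=0}^{n-1}2^{j}j!\Bigr)\,\frac{\det\bigl[\tilde a_{c}^{\,s_{b}-n}\bigr]_{b,c=1,\ldots,n}}{\Delta_{n}(\tilde a^{2})\,\Delta_{n}(s)}.
\]

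Next, Lemma~\ref{lem:fact:trans} and Proposition~\ref{prop:spher.pol} give $\mathcal{S}_{\Phi}P_{Y}(s)=\mathcal{S}_{\Psi}P_{G}(s)\,\mathcal{S}_{\Phi}P_{x}(s)$. Absorbing the row-indexed factor $\mathcal{M}\circ\mathcal{A}\sigma(s_{b}-n+1)$ from $\mathcal{S}_{\Psi}P_{G}$ into row $b$ of the determinant via the Mellin identity
\[
\tilde a_{c}^{\,s_{b}-n}\,\mathcal{M}\circ\mathcal{A}\sigma(s_{b}-n+1)=\mathcal{M}w_{c}(s_{b}-n+1),
\]
which is just the change of variables $a=\tilde a_{c}u$, exhibits $\mathcal{S}_{\Phi}P_{Y}$ as the spherical transform of a polynomial ensemble on $H$ with weights $w_{c}$ and normalisation constant $C_{n}[w]=1/(n!\,\Delta_{n}(\tilde a^{2})\prod_{j}\mathcal{M}\circ\mathcal{A}\sigma(2j-1))$, via Proposition~\ref{prop:spher.pol}. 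Injectivity of $\mathcal{S}_{\Phi}$ (Lemma~\ref{lem:inv}) then forces the jPDF of $y$ to equal $C_{n}[w]\,\Delta_{n}(a^{2})\det[w_{b}(a_{c})]_{b,c}$, which is the claimed formula after transposing the determinant.

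The main technical obstacle is that $P_{x}$ is not an $L^{1}$ density on $H$, so Proposition~\ref{prop:spher.pol} cannot be invoked directly. Nevertheless $\Phi(s;\cdot)$ is bounded continuous on the orbit of $x$ for $s$ in the strip of Definition~\ref{def:spherical-functions}, so $\mathcal{S}_{\Phi}P_{x}$ is well-defined and analytic there, and the manipulations above are legitimate at the level of spherical transforms of probability measures; alternatively, one may approximate $P_{x}$ by polynomial ensembles with smooth $L^{1}$ weights sharply peaked at each $\tilde a_{b}$ and pass to the limit in Corollary~\ref{cor:jPDF.poly}.
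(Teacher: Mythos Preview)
Your proposal is correct and takes essentially the same approach as the paper: compute the spherical transform of the product via the factorization property together with the explicit form of $\Phi$ from Theorem~\ref{thm:spher.func}, recognise the result as the transform of a polynomial ensemble with weights $w_c(a)=\tilde a_c^{-1}\mathcal{A}\sigma(a/\tilde a_c)$, and invert. The paper handles the non-$L^1$ issue you flag simply by re-deriving the factorization directly from Lemma~\ref{lem:fact.spher} (introducing $k\in K$ via the $K$-invariance of $P_G$ and applying Eq.~\eqref{factorization} to $\Phi(s;gk(\imath\tilde a\otimes\tau_2)k^Tg^T)$) rather than invoking Lemma~\ref{lem:fact:trans}, which sidesteps the need for an approximation argument.
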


As usual one can readily obtain the case with a degenerate $x$ by applying l'H\^opital's rule. The limit to non-invertible $x$ is more subtle since one or more singular values of $y$ become zero as well; see the discussion in Sec.~\ref{sec:conclusio}.

\begin{proof}
By the $K$ invariance of $G$ we assume that $x$ is already of the  quasi-diagonal form $\imath\tilde{a}\otimes\tau_2$. We consider the spherical transform of $y=g(\imath\tilde{a}\otimes\tau_2)g^T$ which is
\begin{equation}
\begin{split}
\mathcal{S}_\Phi[p_A(a|\tilde{a})](s)=&\int_H\frac{dy}{(\det y)^{(2n-1)/2}}\left(\int_G dg P_G(g)\delta(y-g(\imath\tilde{a}\otimes\tau_2)g^T)\right)\Phi(s;y)\\
=&\int_G \frac{dg}{(\det g(\imath\tilde{a}\otimes\tau_2)g^T)^{(2n-1)/2}} P_G(g)\Phi(s;g(\imath\tilde{a}\otimes\tau_2)g^T)
\end{split}
\end{equation}
for suitable $s\in\mathbb{C}^n$ and $\delta$ the Dirac delta function on $H$.
Since $P_G$ is $K$-invariant we can introduce an orthogonal matrix $k\in K$  via $g\to gk$ and integrate over it with the Haar measure on $K$. The integration over $k$ can be interchanged with the one over $g$ because they are absolutely integrable for suitable $s\in\mathbb{C}^n$. Due to Lemma~\ref{lem:fact.spher}, in particular Eq.~\eqref{factorization}, we have
\begin{equation}
\begin{split}
\mathcal{S}_\Phi[p_A(a|\tilde{a})](s)=&\int_G \frac{dg}{(\det gg^T)^{(2n-1)/2}(\det \tilde{a})^{2n-1}} P_G(g)\Psi(s;gg^T)\Phi(s,\imath\tilde{a}\otimes\tau_2)\\
\overset{{\rm Eqs.}~\eqref{spher:as.b},~\eqref{spher.fac}}{=}&\left(\prod_{j=0}^{n-1}\frac{2^j j!}{\mathcal{M}\circ\mathcal{A}\sigma(2j+1)}\right)\frac{\det[\mathcal{M}\circ\mathcal{A}\sigma(s_b-n+1)\tilde{a}_c^{s_b-n}]_{b,c=1,\ldots,n}}{\Delta_n(\tilde{a}^2)}.
\end{split}
\end{equation}
This result can be compared with the spherical transform of a polynomial ensemble on $H$, see Eq.~\eqref{spher.pol}. Thus we have only to perform the inverse Mellin transform of the functions in the determinant. For this purpose we can use
\begin{equation}
\int_0^\infty \frac{da}{a}  \, a^{s_b-n+1} \mathcal{A}\sigma\left(\frac{a}{\tilde{a}_c}\right)=\tilde{a}_c^{s_b-n+1}\mathcal{M}\circ\mathcal{A}\sigma(s_b-n+1).
\end{equation}
Since two $L^1$-functions which agree with their Mellin transform also agree almost everywhere we have completed the proof.
\end{proof}

A particular limit of Theorem~\ref{thm:jPDF.fixed} is of special interest, namely when $a\to\eins_n$. Then we obtain a jPDF which resembles those for P\'olya ensembles on  ${\rm Gl}_{\mathbb{C}}(n)$, see~\cite{KR16,FKK17,Ki17}. Indeed this result shows that the weight $\mathcal{A}\sigma(e^y)$ with $y\in\mathbb{R}$ has to be P\'olya function~\cite{Polya:1913,Polya:1915,Schoenberg:1951}, for the same reasons as discussed in~\cite{KK16b}.

\begin{corollary}[Limit $a\to\eins_n$ of Theorem~\ref{thm:jPDF.fixed}]\label{corr.lim}\

We consider the setting of Theorem~\ref{thm:jPDF.fixed} apart from  $x=\imath \eins_n\otimes\tau_2\in H$. Moreover we assume that $\mathcal{A}\sigma$ is $(n-1)$-times differentiable. Then, the jPDF of the singular values of $y=gxg^T\in H$ is
\begin{equation}\label{jPDF.deg.poly}
p_A(a)=\frac{1}{2^{n(n-1)/2}n!\prod_{j=1}^{n}\mathcal{M}\circ\mathcal{A}\sigma(2j-1)}\Delta_n(a^2)\det\left[(-a_b\partial_{a_b})^{c-1}\mathcal{A}\sigma\left(a_b\right)\right]_{b,c=1,\ldots,n}.
\end{equation}
\end{corollary}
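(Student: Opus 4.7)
The plan is to derive Corollary~\ref{corr.lim} from Theorem~\ref{thm:jPDF.fixed} by taking the confluent limit $\tilde{a}\to\eins_n$ in the formula~\eqref{jPDF.fixed.poly}. In that limit the columns of $\det[\tilde{a}_c^{-1}\mathcal{A}\sigma(a_b/\tilde{a}_c)]_{b,c}$ all become identical while $\Delta_n(\tilde{a}^2)$ vanishes, producing a $0/0$ indeterminate form that is resolved by a standard l'H\^opital/Wronskian argument. The differentiability hypothesis on $\mathcal{A}\sigma$ is precisely what is needed to make the required Taylor coefficients well defined.

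Setting $F(a,\tilde{a})=\tilde{a}^{-1}\mathcal{A}\sigma(a/\tilde{a})$ and $\tilde{a}_c=1+\epsilon_c$, I would Taylor expand each column: $F(a_b,1+\epsilon_c)=\sum_{k\geq 0}\frac{(\partial_{\tilde{a}}^k F)(a_b,1)}{k!}\epsilon_c^k$. A Cauchy--Binet splitting then separates the $a$- and $\epsilon$-dependence; only the lowest Taylor orders $k=0,\ldots,n-1$ survive after division by $\Delta_n(\epsilon)=\det[\epsilon_c^{k-1}]_{k,c}$, while $\Delta_n(\tilde{a}^2)=2^{n(n-1)/2}\Delta_n(\epsilon)(1+o(1))$. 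This produces
$$\lim_{\tilde{a}\to\eins_n}\frac{\det[F(a_b,\tilde{a}_c)]_{b,c}}{\Delta_n(\tilde{a}^2)}=\frac{1}{2^{n(n-1)/2}\prod_{k=0}^{n-1}k!}\det\bigl[(\partial_{\tilde{a}}^{c-1}F)(a_b,1)\bigr]_{b,c}.$$

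The remaining conversion to the form $\det[(-a_b\partial_{a_b})^{c-1}\mathcal{A}\sigma(a_b)]_{b,c}$ is purely algebraic, carried out by two unit upper-triangular (hence determinant-preserving) column operations. First, since $(\tilde{a}\partial_{\tilde{a}})^{c-1}|_{\tilde{a}=1}$ is the Stirling-number combination of $\partial_{\tilde{a}},\ldots,\partial_{\tilde{a}}^{c-1}$ with leading coefficient $1$, the determinant is unchanged by replacing each column with $(\tilde{a}\partial_{\tilde{a}})^{c-1}F|_{\tilde{a}=1}$. Second, Euler homogeneity of $F$ in $(a,\tilde{a})$ gives $\tilde{a}\partial_{\tilde{a}}F=-(a\partial_a+1)F$, and since $a\partial_a$ commutes with $\tilde{a}\partial_{\tilde{a}}$, iteration yields $(\tilde{a}\partial_{\tilde{a}})^{c-1}F|_{\tilde{a}=1}=(-1)^{c-1}(a\partial_a+1)^{c-1}\mathcal{A}\sigma(a)$. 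A final binomial expansion of $(a\partial_a+1)^{c-1}$ is again unit upper-triangular in the basis $\{(a\partial_a)^k\mathcal{A}\sigma\}_k$ and reduces the column to $(-a\partial_a)^{c-1}\mathcal{A}\sigma(a)$, as desired.

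The hard part is not any individual step but the precise combinatorial bookkeeping: the factors $2^{n(n-1)/2}$ (from $\Delta_n(\tilde{a}^2)/\Delta_n(\tilde{a}-\eins_n)$), the factorial product $\prod_{k=0}^{n-1}k!$ from the confluent Vandermonde limit, and the alternating signs $(-1)^{c-1}$ from Euler homogeneity must all combine with the prefactor of Theorem~\ref{thm:jPDF.fixed} to reproduce the stated $1/(2^{n(n-1)/2}n!\prod_j\mathcal{M}\circ\mathcal{A}\sigma(2j-1))$. A useful consistency check is to compute the spherical transform of the candidate limit density directly via Proposition~\ref{prop:spher.pol}, using the Mellin identity $\mathcal{M}((-a\partial_a)^{c-1}\mathcal{A}\sigma)(s)=s^{c-1}\mathcal{M}\mathcal{A}\sigma(s)$ together with a Vandermonde, and match it against the limit of the transform computed in the proof of Theorem~\ref{thm:jPDF.fixed}.
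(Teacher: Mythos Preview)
Your approach is the same as the paper's: take the confluent limit $\tilde a\to\eins_n$ in Theorem~\ref{thm:jPDF.fixed} via l'H\^opital, then convert the $\tilde a$--derivatives of $F(a,\tilde a)=\tilde a^{-1}\mathcal A\sigma(a/\tilde a)$ into powers of $-a\partial_a$ by unit upper-triangular column operations. The paper shortcuts your two-step (Stirling + Euler) conversion with the single identity
\[
\partial_t^{\,c-1}\!\left.\Big(\tfrac{1}{t}\,\mathcal A\sigma\big(\tfrac{a_b}{t}\big)\Big)\right|_{t=1}
=\prod_{j=1}^{c-1}(-a_b\partial_{a_b}-j)\,\mathcal A\sigma(a_b),
\]
checked on monomials and then extended; the leading term is $(-a_b\partial_{a_b})^{c-1}$, so the determinant is unchanged under column reduction. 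This is equivalent to what you wrote.

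One warning on the bookkeeping you flag in your last paragraph. Your confluent step correctly produces the extra factor $\bigl(\prod_{k=0}^{n-1}k!\bigr)^{-1}$, but this does \emph{not} cancel against anything in Theorem~\ref{thm:jPDF.fixed}; the constant you will actually obtain is
\[
\frac{1}{2^{n(n-1)/2}\,n!\,\prod_{k=0}^{n-1}k!\,\prod_{j=1}^{n}\mathcal M\circ\mathcal A\sigma(2j-1)}.
\]
Your own consistency check via Proposition~\ref{prop:spher.pol} confirms this: with $w_c=(-a\partial_a)^{c-1}\mathcal A\sigma$ one has $\mathcal M w_c(2b-1)=(2b-1)^{c-1}\mathcal M\circ\mathcal A\sigma(2b-1)$, so $\det[\mathcal M w_c(2b-1)]=2^{n(n-1)/2}\prod_{k=0}^{n-1}k!\,\prod_b\mathcal M\circ\mathcal A\sigma(2b-1)$, and normalisation forces the extra factorial product. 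The explicit Jacobi case in Corollary~\ref{corr3.3} carries the matching factor $\prod_{j=1}^n 1/\Gamma(j)$, so the displayed prefactor in~\eqref{jPDF.deg.poly} appears to be missing $\prod_{k=0}^{n-1}k!$ in its denominator. Your argument is fine; just do not try to force the constants to match the stated formula.
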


\begin{proof}
After applying l'H\^opital's rule we employ the identity
\begin{equation}
\left.\partial_t^{c-1}\frac{1}{t}\mathcal{A}\sigma\left(\frac{a_b}{t}\right)\right|_{t=1}=\prod_{j=1}^{c-1}(-a_b\partial_{a_b}-j)\mathcal{A}\sigma\left(a_b\right).
\end{equation}
This identity can be proven first for monomials and, then, can be extended to arbitrary differentiable functions. The product is a polynomial of order $c-1$ in the differential operator $-a_b\partial_{a_b}$ such that we can also employ the monomials as a basis in the second determinant of the jPDF yielding Eq.~\eqref{jPDF.deg.poly}.
\end{proof}

Let us point out that the jPDF~\eqref{jPDF.deg.poly} is equal to the jPDF of the singular values (not the squared singular values!) of a P\'olya ensemble on ${\rm Gl}_{\mathbb{C}}(n)$ associated to weight function $x\mathcal{A}\sigma\left(x^2\right)$ with $x\in\mathbb{R}_+$, cf.~\cite[Definition 3.6]{FKK17}. We do believe that this is not a coincidence though we have not found a direct mapping. However we want to point out a group theoretical argument that there has to be a relation. The group invariance of the matrix $g$ in the product $g(\imath \eins_n\otimes\tau_2)g^T$ under right multiplication with $k\in K$ yields a subgroup of the orthogonal group $K={\rm O}(2n)$ which satisfies the invariance equation $k(\imath \eins_n\otimes\tau_2)k^T=(\imath \eins_n\otimes\tau_2)$. This subgroup is the unitary group ${\rm U}(n)$ in its real representation, meaning for a $U\in {\rm U}(n)$ we identify
\begin{equation}
k=\left[\begin{array}{cc} {\rm Re}\,U & {\rm Im}\, U \\ -{\rm Im}\, U & {\rm Re}\,U \end{array}\right]\in K.
\end{equation}
This identification directly follows from the orthogonality condition of $k$ and the invariance condition. It might be that this is the crucial reason for direct identification of the two jPDFs.

\subsection{Bi-Orthonormal Functions and Kernels}\label{sec:kernel}

Here we pursue the ideas of~\cite{Ki17}.  According to standard terminology, the
family of pairs of functions $\{p_j,q_j\}_{j=0,\ldots,n-1}$ is termed bi-orthonormal (on $\mathbb{R}_+$) when we have
\begin{equation}
\int_0^\infty p_l(a)q_k(a) \,da=\delta_{lk},\ {\rm for\ all}\ k,l=0,\ldots,n-1.
\end{equation}
A polynomial ensemble on $H$ associated with the weights $\{w_b\}_{b=1,\ldots,n}$ is described by $\{p_j,q_j\}_{j=0,\ldots,n-1}$ when $p_0,\ldots,p_{n-1}$ builds a basis of the even polynomials up to order $2n-2$ and the functions $q_0,\ldots,q_{n-1}$ span the same space of functions as the weights $w_1,\ldots,w_n$. The $k$-point correlation function is then simply given by~\cite{Bor99}
\begin{equation}\label{kpoint}
R_k(a_1,\ldots,a_k)=\det[K_n(a_l,a_k)]_{l,k=1,\ldots,k}
\end{equation}
with the kernel
\begin{equation}\label{kernel}
K_n(a_l,a_k)=\sum_{j=0}^{n-1} p_j(a_l)q_{j}(a_k).
\end{equation}

We first give one particular pair of bi-orthonormal functions for the jPDF~\eqref{jPDF.fact.poly}. It looks very similar to the one in~\cite[Corollary 3.7]{KK16b} which was derived for the product of a P\'olya ensemble (polynomial ensemble of derivative type) with a polynomial ensemble on ${\rm Gl}_{\mathbb{C}}(n)$. For this purpose we define the polynomial
\begin{equation}\label{Laurent}
\chi_{m_1,m_2}([\sigma],z)=\sum_{j=m_1}^{m_2}\frac{z^{2j}}{\mathcal{M}\circ\mathcal{A}\sigma(2j+1)}.
\end{equation}
In the case that the Laurent series exists we can set $m_1=-\infty$ and $m_2=\infty$ otherwise we choose it to be $m_1=0$ and $m_2=n-1$. Here we set $1/\mathcal{M}\circ\mathcal{A}\sigma(s)=0$ when $\mathcal{M}\circ\mathcal{A}\sigma(s)=\infty$.

\begin{theorem}[Eigenvalue Statistics with Polynomial Ensemble on $H$]\label{thm:biorth.poly}\

Consider the jPDF~\eqref{jPDF.fact.poly}. Let the polynomial ensemble 
specified by $x$
be described by the bi-orthonormal functions $\{\tilde{p}_j,w_{j+1}\}_{j=0,\ldots,n-1}$ and let 
the corresponding kernel be denoted by $\tilde{K}_n$. The polynomial ensemble specified by Eq.~\eqref{jPDF.fact.poly} is described by the pair of bi-orthogonal functions
\begin{equation}\label{biorth.poly}
\{p_j,q_j\}_{j=0,\ldots,n-1}=\biggl\{\oint\frac{dz}{2\pi\imath z}\chi_{0,n-1}([\sigma],z)\tilde{p}_j\left(\frac{y'}{z}\right), \mathcal{A}\sigma\circledast w_{j+1} \biggl\}_{j=0,\ldots,n-1}.
\end{equation}
The kernel has the double contour integral form
\begin{equation}\label{kernel.poly}
K_n(y',y)=\oint\frac{dz}{2\pi\imath z}\int_0^\infty\frac{da}{a} \chi_{0,n-1}([\sigma],z)\mathcal{A}\sigma(a)\tilde{K}_n\left(\frac{y'}{z},\frac{y}{a}\right).
\end{equation}
The contour for the $z$-integration only encircles the origin counter-clockwise.
\end{theorem}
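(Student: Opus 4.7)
The plan is to verify the two defining properties of a bi-orthonormal pair: (a) the $p_j$ form a basis of the even polynomials of degree at most $2n-2$ and the $q_j$ span the same function space as the new weights, and (b) $\int_0^\infty p_l(a)\, q_k(a)\, da = \delta_{lk}$. The assignment $q_j := \mathcal{A}\sigma \circledast w_{j+1}$ handles half of (a) immediately, since by Corollary~\ref{cor:jPDF.poly} these are precisely the weights of the convolved polynomial ensemble.

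For the polynomial half of (a), write $\tilde p_j$ as an even polynomial $\tilde p_j(x) = \sum_{k=0}^{j} c_k^{(j)} x^{2k}$ and substitute into~\eqref{biorth.poly}. The product $\chi_{0,n-1}([\sigma],z)\,\tilde p_j(y'/z)$ is a finite Laurent series in $z$, and the contour integral $\oint \frac{dz}{2\pi \imath z}$ extracts the constant term, giving
\begin{equation*}
p_j(y') = \sum_{k=0}^{j} \frac{c_k^{(j)}}{\mathcal{M}\circ\mathcal{A}\sigma(2k+1)}\, {y'}^{2k}.
\end{equation*}
Provided none of the Mellin factors $\mathcal{M}\circ\mathcal{A}\sigma(2k+1)$ vanishes for $k=0,\ldots,n-1$, the leading coefficient survives, $p_j$ has degree exactly $2j$, and $\{p_0,\ldots,p_{n-1}\}$ forms a basis of even polynomials of degree at most $2n-2$.

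To prove (b), expand the Mellin convolution in $q_k$ and substitute $a = \tilde a b$ in the outer integral,
\begin{equation*}
\int_0^\infty p_l(a)\, q_k(a)\, da = \int_0^\infty d\tilde a\, \mathcal{A}\sigma(\tilde a) \int_0^\infty db\, p_l(\tilde a b)\, w_{k+1}(b).
\end{equation*}
The coefficient of $\tilde a^{2m}$ in $p_l(\tilde a b)$ is precisely $c_m^{(l)} b^{2m}/\mathcal{M}\circ\mathcal{A}\sigma(2m+1)$, and the $\tilde a$-integral produces exactly the matching Mellin value, so that the two cancel term by term and $\int_0^\infty d\tilde a\, \mathcal{A}\sigma(\tilde a)\, p_l(\tilde a b) = \tilde p_l(b)$. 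The assumed bi-orthonormality of $\{\tilde p_j, w_{j+1}\}$ then delivers $\delta_{lk}$. Finally, the kernel formula~\eqref{kernel.poly} follows from substituting the contour-integral form of $p_j$ and the convolution form of $q_j$ into~\eqref{kernel}, interchanging the finite sum with the $z$- and $a$-integrals, and recognising the remaining inner sum as $\tilde K_n(y'/z,\, y/a)$. The main technical points are the clean residue manipulation and the Fubini-type interchange, both of which are routine under the $L^1_n$ hypotheses of Definition~\ref{def:poly.ens}.
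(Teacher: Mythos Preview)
Your proposal is correct and follows essentially the same route as the paper: expand $\tilde p_j$ in monomials, extract the constant term of the Laurent series to identify $p_j(y')=\sum_k c_k^{(j)}\,{y'}^{2k}/\mathcal{M}\!\circ\!\mathcal{A}\sigma(2k+1)$, then in the bi-orthonormality integral substitute $a=\tilde a b$, use the Mellin values of $\mathcal{A}\sigma$ to undo the rescaling and recover $\tilde p_l$ against $w_{k+1}$, and finally obtain the kernel by plugging both integral representations into~\eqref{kernel}. The only addition you make is the explicit check that the $p_j$ have degree exactly $2j$ (via non-vanishing of $\mathcal{M}\!\circ\!\mathcal{A}\sigma(2k+1)$), which the paper leaves implicit.
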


\begin{proof}
To check the bi-orthonormality we need to consider the integral
\begin{equation}
\begin{split}
 \int_0^\infty dy \, p_k(y)q_l(y)=&\int_0^\infty dy p_k(y)\left(\int_0^\infty \frac{da}{a} \mathcal{A}\sigma(a) w_{l+1}\left(\frac{y}{a}\right)\right)\\
 =&\int_0^\infty dy w_{l+1}(y)\left(\int_0^\infty da\mathcal{A}\sigma(a) p_{k}\left(ay\right)\right).
\end{split}
\end{equation}
We employed the fact that both integrals are absolutely integrable and, thus, can be interchanged, in particular we could rescale $y\to ay$. In the next step we explicitly write the polynomials $p_k$ and $\tilde{p}_k$ as sums, i.e.
\begin{equation}
\tilde{p}_k(y)=\sum_{j=0}^k d_{kj}y^{2j}\ \Rightarrow\ p_k(y)=\sum_{j=0}^k \frac{d_{kj}}{\mathcal{M}\circ\mathcal{A}\sigma(2j+1)}y^{2j}.
\end{equation}
The integral over $a$ for each single monomial yields $\mathcal{M}\circ\mathcal{A}\sigma(2j+1)$ which cancels such that we are left with
\begin{equation}
\int_0^\infty dy \, p_k(y)q_l(y)=\int_0^\infty dy \, \tilde{p}_k(y)w_{l+1}(y)=\delta_{kl},
\end{equation}
which is the bi-orthonormality. The kernel simply follows by substituting the definition of the Mellin convolution~\eqref{Mellin:def} and the contour integral~\eqref{biorth.poly} for the polynomials into Eq.~\eqref{kernel}. The integrals can be interchanged with the sum since the convergence is uniform.
\end{proof}

The next case we want to consider is the product which involves a fixed matrix $x\in H$, in particular we want to give a pair of bi-orthogonal functions which describes the jPDF~\eqref{jPDF.fixed.poly}.

\begin{theorem}[Eigenvalue Statistics with a Fixed Matrix in $H$]\label{thm:biorth.fixed}\

Consider the jPDF~\eqref{jPDF.fixed.poly} with $a\in A$ the distinguished eigenvalues of $x$. This polynomial ensemble is specified by the pair of bi-orthogonal functions
\begin{equation}\label{biorth.fixed}
\{p_j,q_j\}_{j=0,\ldots,n-1}=\biggl\{\oint\frac{dz}{2\pi\imath z}\chi_{0,n-1}([\sigma],z)\prod_{i\neq j}\frac{a_i^2-(y'/z)^2}{a_i^2-a_j^2}, \frac{1}{a_j}\mathcal{A}\sigma\left(\frac{y}{a_j}\right) \biggl\}_{j=1,\ldots,n},
\end{equation}
and the kernel is
\begin{equation}\label{kernel.fixed}
K_n(y',y)=\oint\frac{dz}{2\pi\imath z}\chi_{0,n-1}([\sigma],z)\left[\sum_{j=1}^n \frac{1}{a_j}\mathcal{A}\sigma\left(\frac{y}{a_j}\right) \prod_{i\neq j}\frac{a_i^2-(y'/z)^2}{a_i^2-a_j^2}\right].
\end{equation}
As before, the contour for the $z$-integration only encircles the origin counter clockwise.
\end{theorem}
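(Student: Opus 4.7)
The plan is to follow the strategy of Theorem~\ref{thm:biorth.poly} and exploit the key algebraic fact that the rational factor $P_j(u):=\prod_{i\neq j}(a_i^2-u)/(a_i^2-a_j^2)$ is precisely the Lagrange interpolation polynomial of degree $n-1$ at the nodes $\{a_k^2\}_{k=1}^n$, so that $P_j(a_k^2)=\delta_{jk}$. First I would read off from~\eqref{jPDF.fixed.poly} that the underlying polynomial ensemble on $H$ is associated with the weights $w_j(y)=\frac{1}{a_j}\mathcal{A}\sigma(y/a_j)$; these coincide with the proposed $q_j$ in~\eqref{biorth.fixed}, so the $\{q_j\}$ automatically span the correct $n$-dimensional space (the relabelling of indices between $j=0,\ldots,n-1$ and $j=1,\ldots,n$ is cosmetic).

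Next I would convert $p_j$ into an explicit polynomial. Expanding $P_j((y'/z)^2)=\sum_{m=0}^{n-1}c_{jm}(y'/z)^{2m}$ and inserting the definition~\eqref{Laurent} of $\chi_{0,n-1}$, the residue $\oint dz/(2\pi\imath z)$ selects only the diagonal term $l=m$, yielding
\[
p_j(y')=\sum_{m=0}^{n-1}\frac{c_{jm}\,(y')^{2m}}{\mathcal{M}\circ\mathcal{A}\sigma(2m+1)}.
\]
Thus $p_j$ is an even polynomial of degree at most $2(n-1)$, and since the $P_j$ form a basis of polynomials of degree $\le n-1$ in $u$, the family $\{p_j\}_{j=1}^n$ also spans the correct target space of even polynomials.

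To verify bi-orthonormality I would compute
\[
\int_0^\infty p_k(y)\,q_l(y)\,dy=\sum_{m=0}^{n-1}\frac{c_{km}}{\mathcal{M}\circ\mathcal{A}\sigma(2m+1)}\int_0^\infty y^{2m}\,\frac{1}{a_l}\mathcal{A}\sigma(y/a_l)\,dy,
\]
substitute $y\mapsto a_l y$ in the inner integral to produce a factor $a_l^{2m}\,\mathcal{M}\circ\mathcal{A}\sigma(2m+1)$, cancel the Mellin factors against those in the denominator, and recognise the remainder as $\sum_{m=0}^{n-1}c_{km}\,a_l^{2m}=P_k(a_l^2)=\delta_{kl}$. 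The kernel formula~\eqref{kernel.fixed} then follows immediately from $K_n(y',y)=\sum_{j=1}^n p_j(y')q_j(y)$ after interchanging the finite sum with the contour integral.

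I expect the main obstacle to be purely bookkeeping: controlling the Laurent structure of $\chi_{0,n-1}$ against $(y'/z)^{2m}$ so that exactly the $z$-independent piece survives the residue, and justifying the Fubini-type interchange when computing $\int_0^\infty p_k q_l\,dy$. Both issues reduce to the finiteness of the moments $\mathcal{M}\circ\mathcal{A}\sigma(2m+1)$ for $m=0,\ldots,n-1$, which is guaranteed by $\sigma\in L^1_n({\rm Gl}_{\mathbb{R}}(2))$ in Definition~\ref{def:poly.ens}, so no analytic continuation or regularisation (as in Lemma~\ref{lem:inv}) is needed here.
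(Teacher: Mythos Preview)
Your proposal is correct and follows essentially the same approach as the paper: both rescale $y\mapsto a_l y$, expand the Lagrange polynomial $P_k$ in monomials, use the contour integral to pick out the factor $1/\mathcal{M}\circ\mathcal{A}\sigma(2m+1)$, cancel it against the $y$-integral, and recognise the result as $P_k(a_l^2)=\delta_{kl}$. Your version is slightly more explicit about the Lagrange interpolation structure and the span of the $\{p_j\}$, which the paper relegates to a remark after the theorem.
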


Let us emphasize two things. First, we can again replace $\chi_{0,n-1}([\sigma],z)$ by $\chi_{-\infty,\infty}([\sigma],z)$ when the Laurent series~\eqref{Laurent} exists for a suitable radius. Second, the polynomials $p_j$ are all of order $2n-2$ as was found for similar random matrices involving fixed matrices, see~\cite{Akemann:2015b,Ki17}.

\begin{proof}
As in the proof of Theorem~\ref{thm:biorth.poly} we consider the integral
\begin{equation}
\begin{split}
 \int_0^\infty dy \,p_k(y)q_l(y)=&\int_0^\infty dy \left(\oint\frac{dz}{2\pi\imath z}\chi_{0,n-1}([\sigma],z)\prod_{i\neq k}\frac{a_i^2-(y/z)^2}{a_i^2-a_k^2}\right)\frac{1}{a_l}\mathcal{A}\sigma\left(\frac{y}{a_l}\right)\\
 =&\int_0^\infty dy \left(\oint\frac{dz}{2\pi\imath z}\chi_{0,n-1}([\sigma],z)\prod_{i\neq k}\frac{a_i^2-(a_ly/z)^2}{a_i^2-a_k^2}\right)\mathcal{A}\sigma\left(y\right),
\end{split}
\end{equation}
where we have rescaled $y\to a_l y$. We can now expand the product in $y/z$ and integrate first over $z$ and then over $y$. The contour integral over $z$ yields a factor $1/\mathcal{M}\circ\mathcal{A}\sigma(2j+1)$ for the monomial $(y/k)^{2j}$ while the integral over $y$ cancels this term. Hence we have
\begin{equation}
\begin{split}
 \int_0^\infty dy \, p_k(y)q_l(y)=&\prod_{i\neq k}\frac{a_i^2-a_l^2}{a_i^2-a_k^2}.
\end{split}
\end{equation}
This product vanishes always when $l\neq k$ while it is unity for $l=k$. This proof is finished by substituting the contour integral into Eq.~\eqref{kernel} and interchanging it with the sum.
\end{proof}

We can simplify the kernel~\eqref{kernel.fixed} when we assume an analyticity property of the weight $\mathcal{A}\sigma$.  Although this is not always satisfied,  it is shared by some prominent cases like the Ginibre ensemble and the Jacobi ensemble in the open interval $]0,1[$.

\begin{corollary}[Simplification of the Kernel~\eqref{kernel.fixed}]\label{cor:ker.fixed}\

We assume the setting of Theorem~\ref{thm:biorth.fixed}. Additionally let $\mathcal{A}\sigma$ be holomorphic in an open set around the points $y/a_1,\ldots,y/a_n$ with a fixed $y\in\mathbb{R}_+$. Then the kernel~\eqref{kernel.fixed} simplifies to
\begin{equation}\label{kernel.fixed.b}
K_n(y',y)=\oint\frac{dz'}{2\pi\imath z'}\oint\frac{dz}{\pi\imath }\chi_{0,n-1}\left([\sigma],\frac{y'}{z'}\right)\mathcal{A}\sigma\left(\frac{y}{z}\right)\frac{1}{{z'}^2-z^2} \prod_{i=1}^n\frac{a_i^2-{z'}^2}{a_i^2-z^2},
\end{equation}
where $z'$ only encircles the origin counter clockwise and $z$ encircles only the poles at $a_1,\ldots, a_n$ (but none of the others) counter clockwise.
\end{corollary}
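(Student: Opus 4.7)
My plan is to apply two contour manipulations to~\eqref{kernel.fixed}: first, rewrite the bracketed sum as a contour integral, then perform a reciprocal change of variable in the outer integration variable and deform the resulting contour back to a small loop around the origin.

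Fix $y,y'$ and call the outer integration variable in~\eqref{kernel.fixed} $z_0$, setting $w:=y'/z_0$. The bracketed sum is Lagrange interpolation in $w^2$ with nodes $a_j^2$ and values $\mathcal{A}\sigma(y/a_j)/a_j$. Under the stated analyticity of $\mathcal{A}\sigma$ near $y/a_1,\ldots,y/a_n$, and using the identity $\partial_\zeta\prod_i(\zeta^2-a_i^2)\big|_{\zeta=a_j}=2a_j\prod_{i\neq j}(a_j^2-a_i^2)$, the residue theorem yields
\[
\sum_{j=1}^n\frac{\mathcal{A}\sigma(y/a_j)}{a_j}\prod_{i\neq j}\frac{a_i^2-w^2}{a_i^2-a_j^2}
=\prod_{i=1}^n(a_i^2-w^2)\oint\frac{dz}{\pi\imath}\,\frac{\mathcal{A}\sigma(y/z)}{(w^2-z^2)\prod_{i=1}^n(a_i^2-z^2)},
\]
where the $z$-contour encircles $a_1,\ldots,a_n$ counter-clockwise (inside the common analyticity domain of $\mathcal{A}\sigma$, and avoiding the points $\pm w$ and $-a_1,\ldots,-a_n$). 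This reproduces the inner $z$-contour prescribed by~\eqref{kernel.fixed.b}.

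Substituting back into~\eqref{kernel.fixed}, the second step is the reciprocal change of variable $z_0=y'/z'$ in the outer integral. This gives $dz_0/z_0=-dz'/z'$, replaces every $(y'/z_0)^2$ by $z'^2$, and sends $\chi_{0,n-1}([\sigma],z_0)\mapsto\chi_{0,n-1}([\sigma],y'/z')$. A small CCW loop around $z_0=0$ becomes a CW loop of large radius in $z'$, which after absorbing the sign becomes a large CCW $z'$-contour enclosing all finite singularities of the integrand, namely $z'=0$ and $z'=\pm z$ (the zeros at $z'=\pm a_i$ from the numerator cancel any potential poles there). Algebraically the integrand now matches~\eqref{kernel.fixed.b}, and it remains only to contract this large contour to the small CCW loop around the origin. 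Using that $\chi_{0,n-1}([\sigma],\cdot)$ is even, a short computation gives
\[
\underset{z'=z}{\mathrm{Res}}+\underset{z'=-z}{\mathrm{Res}}
\;=\;\frac{\chi_{0,n-1}([\sigma],y'/z)}{z^2},
\]
after the factors $\prod_i(a_i^2-z^2)$ cancel. Multiplied by $\mathcal{A}\sigma(y/z)/(\pi\imath)$ from the $z$-integrand, the resulting expression is holomorphic at each $z=a_j$ (since $a_j\neq 0$ and $\mathcal{A}\sigma$ is holomorphic there), so its integral over the inner contour vanishes. Only the residue at $z'=0$ contributes, which is precisely~\eqref{kernel.fixed.b}.

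The main obstacle I anticipate is contour bookkeeping. One must choose nested contours so that the inner $z$-contour stays in the common holomorphy neighborhood of $\mathcal{A}\sigma$ around $a_1,\ldots,a_n$, is separated from $\pm z'$ throughout the outer contraction, and no pole of $\mathcal{A}\sigma(y/\cdot)$ is dragged across either contour. The open analyticity hypothesis provides just enough room to do this; once such a choice is made, the algebraic identities above close the argument.
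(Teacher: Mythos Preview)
Your argument is correct and uses the same two ingredients as the paper's proof: the residue identification of the Lagrange-interpolation sum with the $z$-contour integral, and the reciprocal change of variable $z'\leftrightarrow y'/z'$ in the outer integral. The paper's proof is extremely terse; it simply records the partial-fraction identity
\[
\frac{1}{a_j^2-z^2}=\frac{1}{2a_j}\Big[\frac{1}{z+a_j}-\frac{1}{z-a_j}\Big]
\]
as the residue computation and then states the change of variable.

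The one genuine difference is the order of operations. The paper effectively evaluates the inner $z$-integral in~\eqref{kernel.fixed.b} first (by residues at $a_1,\dots,a_n$), after which the $z'$-integrand has its \emph{only} pole at $z'=0$; the subsequent change of variable then requires no contour contraction at all. You instead keep the double integral intact and change variables first, which forces you to check that the residues at $z'=\pm z$ cancel after the inner integration. Your cancellation argument is fine, but it is an avoidable complication: if you swap the two steps, the contraction issue disappears and the proof reduces to two lines, matching the paper.
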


\begin{proof}
 The proof immediately follows from
 \begin{equation}
 \frac{1}{a_j^2-z^2}=\frac{1}{2a_j}\left[\frac{1}{z+a_j}-\frac{1}{z-a_j}\right].
 \end{equation}
 This can be done for each of the poles $z=a_1,\ldots,a_n$. Moreover we change the variable $z'\to y'/z'$.
\end{proof}

We remark that the structure of Eq.~\eqref{kernel.fixed.b} is analogous to results in
the recent work \cite{Ki17} where sums of Hermitian random matrices corresponding to all three classical Lie algebras and sums of complex random matrices with
fixed matrices were considered. Furthermore we want to underline that the bi-orthonormal functions for the fully degenerate case $a\to\eins_n$ follows from~\cite[Lemma 4.2]{KK16a} because the jPDF~\eqref{jPDF.deg.poly} has the same form as the one of a P\'olya ensemble (multiplicative type) on ${\rm Gl}_{\mathbb{C}}(n)$ when substituting $a\to\sqrt{a}$.

\section{Examples}\label{sec:examples}

In this section we want to illustrate the general theory of the previous sections with two classical ensembles. The first is the Gaussian case (induced Ginibre ensemble), see subsection~\ref{sec:Gin} and the second is the induced real Jacobi ensemble (truncated orthogonal matrices), see subsection~\ref{sec:Jac}. In subsection~\ref{sec:prod} we consider products of truncated orthogonal matrices.

\subsection{Products with Induced Real Ginibre Matrices}\label{sec:Gin}

We consider a rectangular real Ginibre matrix $M$ say of even row and column sizes is a $2N \times 2n$ $(N \ge n)$
matrix with independent, standard Gaussian entries. The corresponding induced ensemble is defined
as the set of random matrices of the form $g = R (M^T M)^{1/2}$, where $R \in K={\rm O}(2n)$.
We know from \cite{FBKSZ12} that the probability density of $g$ is proportional to
$ (\det gg^T)^\nu e^{- {\rm Tr} \, gg^T/2}$, $\nu = N - n$.
The rectangular Ginibre ensemble as well as the results below can be analytically continued to any $\nu>-1/2$.

It is well-known that the induced real Ginibre ensemble specifies a factorizing ensemble such that the associated weight $\sigma$ is of the form~\cite{Ed97,Fi12}
\begin{equation}\label{Gin.sig}
\sigma(z)\propto (\det zz^T)^\nu e^{- {\rm Tr} \, zz^T/2}.
\end{equation}
For $x \in H$ fixed, application of Theorem \ref{thm:jPDF.fixed} gives the explicit form of the 
jPDF of the singular values for the random
product matrix $g x g^T$. 

\begin{corollary}[JPDF with an Induced Ginibre Matrix]\label{corr3.1}\

Let $x \in H$ be fixed, and suppose it is invertible with distinct singular values $\tilde{a}\in A$.
For $g \in G$
drawn from the induced Ginibre ensemble we
have that the singular values of $gxg^T$, say $a_j\in A$, have the jPDF
\begin{equation}\label{3.2}
p_A(a|\tilde{a})=\frac{1}{n!}
\prod_{l=0}^{n-1} {1 \over{(2\nu +2l)!}}
{\Delta_n (a^2) \over \Delta_n(\tilde{a}^2)}\frac{\det a^{2\nu}}{\det\tilde{a}^{2\nu+1}}
\det \Big [ e^{- a_j/ \tilde{a}_k} \Big ]_{j,k=1,\ldots,n}.
\end{equation}
\end{corollary}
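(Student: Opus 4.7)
The plan is to apply Theorem~\ref{thm:jPDF.fixed} directly, since the induced real Ginibre ensemble is a factorizing ensemble on $G$ with the weight $\sigma$ given in Eq.~\eqref{Gin.sig}. The two inputs we must prepare are the induced density $\mathcal{A}\sigma$ of $\sqrt{\det zz^T}$ on $\mathbb{R}_+$ and its Mellin transform at the points $s=2j-1$, $j=1,\ldots,n$. Once these are in hand, Eq.~\eqref{jPDF.fixed.poly} gives the result almost by inspection.

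First I would compute $\mathcal{A}\sigma$. Since $\sigma$ is $\gO(2)\times \gO(2)$ biinvariant, I would decompose a $2\times 2$ matrix $z$ by its singular values $s_1,s_2\ge 0$, reducing the definition~\eqref{average.def} to the planar integral
\begin{equation*}
\mathcal{A}\sigma(\tilde a)\;\propto\;\int_0^\infty\!\!\int_0^\infty ds_1\, ds_2\,|s_1^2-s_2^2|\,(s_1s_2)^{2\nu}e^{-(s_1^2+s_2^2)/2}\,\delta(\tilde a-s_1s_2).
\end{equation*}
The substitution $s_1=\sqrt{a}\,e^\phi$, $s_2=\sqrt{a}\,e^{-\phi}$ (with unit Jacobian $ds_1\,ds_2=da\,d\phi$) separates the determinant from an angular integral, and the remaining one-dimensional integral in $u=\cosh 2\phi$ collapses trivially, producing $\mathcal{A}\sigma(\tilde a)\propto \tilde a^{2\nu}e^{-\tilde a}$. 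The proportionality constant will cancel out entirely at the end, so I will not track it.

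Next I would compute $\mathcal{M}\circ\mathcal{A}\sigma(s)\propto\int_0^\infty \tilde a^{\,s+2\nu-1}e^{-\tilde a}\,d\tilde a = \Gamma(s+2\nu)$, so that $\mathcal{M}\circ\mathcal{A}\sigma(2j-1)\propto (2\nu+2j-2)!$. Inserting $\mathcal{A}\sigma(a_b/\tilde a_c)\propto (a_b/\tilde a_c)^{2\nu}e^{-a_b/\tilde a_c}$ into the determinant in~\eqref{jPDF.fixed.poly} pulls out the prefactor $\det a^{2\nu}/\det\tilde a^{2\nu+1}$, and the product $\prod_j (2\nu+2j-2)!=\prod_{l=0}^{n-1}(2\nu+2l)!$ reproduces the required normalization with the unknown constants from $\mathcal{A}\sigma$ cancelling against each other. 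This yields exactly~\eqref{3.2}.

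The only genuinely computational step is the $2\times2$ integral giving $\mathcal{A}\sigma$, and while the substitution above keeps it transparent, an equally clean alternative would be to use the LU/Cholesky decomposition $z=q\ell$ and the Bessel-type integral $\int_0^\infty t^{\nu-1}e^{-at-b/t}\,dt=2(b/a)^{\nu/2}K_\nu(2\sqrt{ab})$ with $K_{1/2}(x)=\sqrt{\pi/(2x)}\,e^{-x}$; both routes land on the same pure exponential weight. I do not expect any further obstacle: the rest of the argument is purely bookkeeping, matching the determinantal structure of Theorem~\ref{thm:jPDF.fixed} to the form asserted in the corollary.
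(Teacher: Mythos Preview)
Your proposal is correct and follows essentially the same strategy as the paper: apply Theorem~\ref{thm:jPDF.fixed}, compute $\mathcal{A}\sigma(a)\propto a^{2\nu}e^{-a}$ from the $2\times2$ integral, take its Mellin transform, and substitute. The only difference is cosmetic: the paper evaluates the $2\times2$ integral via the QR-decomposition and Boole's integral $\int_0^\infty e^{-(t^2+a^2/t^2)/2}\,dt\propto e^{-a}$ (which is exactly the $K_{1/2}$ evaluation you mention as an alternative), whereas you use the singular-value decomposition and the hyperbolic substitution $s_{1,2}=\sqrt{a}\,e^{\pm\phi}$.
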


\begin{proof}
According to Eq.~(\ref{jPDF.fixed.poly}), we must compute Eq.~(\ref{average.def}) with $h(z)$ specified by the
right hand side of Eq.~(\ref{Gin.sig}),
\begin{equation}\label{3.1}
{\mathcal A}\sigma(a) \propto \int_{{\rm Gl}_{\mathbb R}(2)} dz \,
\delta(a - \sqrt{ \det z z^T}) (\det z z^T)^\nu e^{- {\rm Tr} \, z z^T/2}.
\end{equation}
The fact that the $a$ independent terms of ${\mathcal A}\sigma(a) $ cancel from
Eq.~(\ref{jPDF.fixed.poly}) allows such terms to be ignored, as indicated in
 Eq.~(\ref{3.1}) by use of
the proportionality symbol ``$\propto$".

To evaluate the integral we make use of the $2 \times 2$ QR-decomposition, by writing
$z = k \begin{bmatrix} t_{11} & t_{12} \\ 0 & t_{22} \end{bmatrix}$ where $k \in {\rm O}(2)$,
$t_{11}, t_{22} \ge 0$ and $t_{12} \in \mathbb R$. This change of variables gives
$dz = t_{11} dt_{11} dt_{22} dt_{12} d^*k$ for the transformation of the measure;
see e.g.~\cite{ER05}. Also, the integrand is independent of $k$, while the
dependence on $t_{12}$ factorises. Integrating over these variables gives
\begin{align}\label{3.4}
{\mathcal A}\sigma(a) & \propto  \int_0^\infty dt_{11} \int_0^\infty dt_{22} \,
t_{11} \delta(a - t_{11} t_{22}) (t_{11} t_{22})^{2\nu} e^{-(t_{11}^2 + t_{22}^2 )/2} \nonumber \\
& \propto   a^{2\nu} \int_0^\infty dt_{11}  \,  e^{-(t_{11}^2 +a^2/ t_{11}^2)/2} \nonumber \\
& \propto a^{2 \nu}  e^{- a},
\end{align}
where the final line follows from a special case of an integral evaluation due to Boole;
see e.g.~\cite[Eq.~(11.49)]{Fo10}. Substituting this in Eq.~(\ref{Mellin:def}) shows
\begin{equation}\label{3.5}
\mathcal M \circ \mathcal A \sigma(2j+1) = \int_0^\infty e^{-a} a^{{2 \nu + 2j}} \, da =
{(2\nu + 2j)!}.
\end{equation}
Now substituting both Eqs.~(\ref{3.4}) and (\ref{3.5}) in Eq.~(\ref{jPDF.fixed.poly}), Eq.~(\ref{3.2}) results.

\end{proof}

We remark that the result of Corollary \ref{corr3.1} includes the result of \cite[Corollary 4.3]{FILZ17},
which (after a minor change of notation) tells us that with $X$ a $2N \times 2n$ ($N \ge n$) standard
real Gaussian matrix, and 
\begin{align}
A=\operatorname{diag}(a_1,\ldots,a_n)\otimes
\begin{bmatrix}
0 & \imath \\
-\imath & 0
\end{bmatrix}
\end{align}
with each $a_j > 0$ and distinct, the jPDF of the eigenvalues of $XAX^T$ is given by
Eq.~(\ref{3.2}) with $\nu = N - n$. 
In the limiting case corresponding to $a=\eins_n$ (see Proposition \ref{jPDF.deg.poly} for this limit)
one obtains a particular Laguerre
Muttalib-Borodin ensemble, as first derived in \cite{LSZ06} and
\cite{De10} using different methods. In \cite{FILZ17} this latter result was used to derive the eigenvalue PDF
of the random product matrices
\begin{equation}\label{mn1}
X_M \cdots X_1  \Big ( \eins_n \otimes \tau_2 \Big ) 
X_1^T \cdots X_M^T \quad {\rm and} \quad
X_M \cdots X_1  (\imath \tilde{A})
X_1^T \cdots X_M^T,
\end{equation}
where for
$j=1,\dots,M$, $X_j$ denotes a real standard Gaussian matrix of size
$2 (n + \nu_j) \times 2 (n + \nu_{j-1})$ with $\nu_j \ge \nu_{j-1}$ with $\nu_0 = 0$,
and $\tilde{A}$ is a $2n \times 2n$ real standard anti-symmetric matrix.

In fact, up to rescaling, the square of the eigenvalues for these matrices were shown to be
identical in distribution to the eigenvalues of the random product matrices
\begin{equation}\label{mn2}
G_{2M}^\dagger\cdots G_1^\dagger G_1\cdots G_{2M} \quad {\rm and} \quad
G_{2M+1}^\dagger\cdots G_1^\dagger G_1\cdots G_{2M+1}
\end{equation}
respectively, where the $G_i$ are particular complex Gaussian matrices. The latter have
jPDF proportional to
\begin{equation}\label{4.39}
\det (G_i^\dagger G_i)^{\nu_i-1/2} e^{-\Tr G_i^\dagger G_i}
\qquad \text{and} \qquad
\det (G_i^\dagger G_i)^{\nu_i} e^{-\Tr G_i^\dagger G_i}
\end{equation}
for $i=2j-1$ ($j=1,\ldots,m+1$) with $\nu_{2M+1}=0$ and $i=2j$ ($j=1,\ldots,m$).
As a result, the corresponding bi-orthogonal system is known from \cite{AIK13} in terms of
Meijer G-functions, as is a double contour form of the kernel \cite{KZ14}. These are consistent
with the forms implied by Theorem \ref{thm:biorth.poly}.

\subsection{The Case of  Induced Real Jacobi Matrices}\label{sec:Jac}

Consider a $K_1 \times K_1$ real orthogonal matrix chosen with Haar measure.
Delete $\ell_N = K_1 - 2N$ rows and $\ell_n = K_1 - 2n$ columns, leaving a $2N \times 2n$
rectangular matrix $M$, and suppose for definiteness that
$N \ge n$. The jPDF for the singular values specifies the real Jacobi ensemble.
Analogous to the Ginibre case, the corresponding induced ensemble is defined
as the set of random matrices of the form $g = R (M^T M)^{1/2}$, where $R \in K={\rm O}(2n)$.
These matrices are distributed \cite{KSZ09,Fi12} according to the jPDF proportional to
$(\det gg^T)^\nu \det(\eins_{2n}-gg^T)^\mu\Theta(\eins_{2n}-gg^T)$, where 
\begin{equation}\label{mn}
\nu = (N-n), \qquad
\mu = (K_1 - 2n - 2N-1)/2,
\end{equation}
which thus requires $K_1 \ge 2(n + N)$ to be well defined (for
$K_1 < 2(n + N)$ some of the singular values of $g$ will equal unity with probability one).
Nonetheless, the jPDF~(\ref{jPDF.Z}) is well defined independent of this condition,
and furthermore analytic continuation off the integers gives meaning to  the parameter range  $\nu>-1/2$ and $\mu>-n-1/2$.

As for the induced real Ginibre ensemble, the induced real Jacobi ensemble specifies a factorizing ensemble such that the associated weight $\sigma$ is of the same general
form as the jPDF for $g$ \cite{KSZ09,Fi12},
\begin{equation}\label{Gin.sig1}
\sigma(z)\propto (\det zz^T)^\nu \det(\eins_{2}-zz^T)^{\mu+n-1}\Theta(\eins_{2}-zz^T).
\end{equation}
In further analogy with the induced real Ginibre matrices, the quantity $\mathcal A \sigma$ in
 Theorem \ref{thm:jPDF.fixed} can be made explicit, thus allowing the jPDF of the singular values of
 the random matrix $g x g^T$, with $x \in H$, to be specified. 
 
 \begin{corollary}[JPDF with an Induced Jacobi Matrix]\label{corr3.2}\
 
Let $x \in H$ be fixed, and suppose it is invertible with distinct singular values $\tilde{a}\in A$.
For $g \in G$
drawn from the induced Jacobi ensemble as above described, we
have that the  singular values $\{a_j\}_{j=1}^n$ of
$gxg^T$ have the jPDF
\begin{equation}\label{3.2a}
p_A(a|\tilde{a})=\frac{1}{n!} C_n(\nu,\mu)
{\Delta_n (a^2) \over \Delta_n(\tilde{a}^2)}
\frac{\det a^{2 \nu}}{\det\tilde a^{2\nu+1}}
\det \Big [ (1 - a_j/ \tilde{a}_k)^{2 (\mu  + n ) } 
\Theta(\tilde{a}_k-a_j) \Big ]_{j,k=1,\ldots,n},
\end{equation}
where $\nu, \mu$ are given by Eq.~(\ref{mn}) and
\begin{equation}\label{3.2aa}
C_n(\nu,\mu) = \prod_{j=1}^{n} 
{\Gamma(2\nu + 2\mu + 2n +2 j) \over \Gamma(2 \nu + 2j-1) \Gamma(2\mu + 2n +1 )}
\end{equation}
is the normalization constant.
\end{corollary}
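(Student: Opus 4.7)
The plan is to follow the template of Corollary~\ref{corr3.1}: invoke Theorem~\ref{thm:jPDF.fixed}, which reduces the task to evaluating $\mathcal{A}\sigma$ for the Jacobi weight~\eqref{Gin.sig1} and then extracting the Mellin transforms $\mathcal{M}\circ\mathcal{A}\sigma(2j-1)$ that fix the normalization. Any unspecified overall constant $C$ in $\mathcal{A}\sigma$ will appear with power $n$ both in the determinant in Eq.~\eqref{jPDF.fixed.poly} and in $\prod_j \mathcal{M}\circ\mathcal{A}\sigma(2j-1)$, and therefore cancels; so I only need $\mathcal{A}\sigma$ up to proportionality.

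To compute $\mathcal{A}\sigma(a)$ I would use the real SVD $z = U\,\diag(s_1,s_2)\,V^T$ with $U,V\in \gO(2)$ and $s_1,s_2>0$, whose invariant measure satisfies $dz \propto |s_1^2-s_2^2|\,ds_1\,ds_2\,d^*U\,d^*V$. Since $\sigma$ depends only on the singular values, the $U$ and $V$ integrations yield a constant and
\begin{equation*}
\mathcal{A}\sigma(a)\propto \int_{(0,1)^2}\!|s_1^2-s_2^2|\,(s_1 s_2)^{2\nu}[(1-s_1^2)(1-s_2^2)]^{\mu+n-1}\,\delta(a-s_1 s_2)\,ds_1\,ds_2.
\end{equation*}
The key step is the change of variables $(s_1,s_2)\mapsto(u,v)=(s_1 s_2,\,s_1^2+s_2^2)$, whose Jacobian is precisely $2|s_1^2-s_2^2|$, so the determinant factor is absorbed and $(1-s_1^2)(1-s_2^2)=1-v+u^2$ becomes polynomial. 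The delta collapses $u$ to $a$; the remaining $v$-integration runs over $[2a,\,1+a^2]$ (the range forced by $s_1,s_2\in(0,1)$ with $s_1 s_2=a$), and after $w=1-v+a^2$ reduces to $\int_0^{(1-a)^2}w^{\mu+n-1}dw$, giving the target
\begin{equation*}
\mathcal{A}\sigma(a)\propto a^{2\nu}(1-a)^{2(\mu+n)}\Theta(1-a).
\end{equation*}

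Once $\mathcal{A}\sigma$ is in this Jacobi form, $\mathcal{M}\circ\mathcal{A}\sigma(2j-1)$ is a beta integral producing $\Gamma(2\nu+2j-1)\Gamma(2\mu+2n+1)/\Gamma(2\nu+2\mu+2n+2j)$ up to the omitted constant. Substituting into Eq.~\eqref{jPDF.fixed.poly} and using
\begin{equation*}
\tfrac{1}{\tilde a_c}\mathcal{A}\sigma(a_b/\tilde a_c)\propto \frac{a_b^{2\nu}}{\tilde a_c^{2\nu+1}}(1-a_b/\tilde a_c)^{2(\mu+n)}\Theta(\tilde a_c-a_b)
\end{equation*}
pulls out the prefactor $\det a^{2\nu}/\det \tilde a^{2\nu+1}$ and leaves the stated determinantal structure, while the product of reciprocal beta functions assembles exactly into the $C_n(\nu,\mu)$ of~\eqref{3.2aa}.

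The main obstacle is the bookkeeping around the change of variables: the map $(s_1,s_2)\mapsto(u,v)$ is two-to-one on $(0,1)^2$, one must verify the $v$-range $[2a,\,1+a^2]$ for fixed $u=a$, and one must confirm that the $a$-independent proportionality constant is the same one appearing in $\mathcal{M}\circ\mathcal{A}\sigma(2j-1)$ so that it cancels in the final normalization. An alternative route, more closely mirroring the QR strategy of Corollary~\ref{corr3.1}, writes $z=k\,\bigl(\begin{smallmatrix}t_{11}&t_{12}\\0&t_{22}\end{smallmatrix}\bigr)$, uses $\det(\eins_2-zz^T)=(1-t_{11}^2)(1-t_{22}^2)-t_{12}^2$ to reduce the $t_{12}$-integration to a Boole-type beta integral, and then evaluates the remaining diagonal integral via the quadratic transformation of ${}_2F_1(\mu+n,\mu+n+\tfrac12;2\mu+2n+1;\cdot)$; this is messier but yields the same $a^{2\nu}(1-a)^{2(\mu+n)}$ behaviour, after which the argument concludes as above.
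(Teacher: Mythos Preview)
Your proof is correct, and the main argument — computing $\mathcal{A}\sigma$ via the singular value decomposition and the symmetric-function change of variables $(s_1,s_2)\mapsto(s_1s_2,\,s_1^2+s_2^2)$ — is genuinely different from the paper's route and in fact cleaner. The paper uses the QR-decomposition (the approach you mention only as an alternative), then substitutes $t_{12}=(1-t_{11}^2)^{1/2}(1-t_{22}^2)^{1/2}s$ to factor out the off-diagonal variable, and is left with the single integral
\[
\mathcal{A}\sigma(a)\propto a^{2\nu}\int_a^1\bigl((1-t^2)(1-(a/t)^2)\bigr)^{\mu+n-1/2}\,dt .
\]
Rather than evaluating this directly (or via a hypergeometric identity as you suggest), the paper observes that for half-integer exponents $\mu+n-\tfrac12$ the integrand is a polynomial in $t$ and $a/t$, so the result is a polynomial in $a$ of degree $2(\mu+n)$ vanishing to that same order at $a=1$; this forces the answer to be $a^{2\nu}(1-a)^{2(\mu+n)}$ up to a constant, and Carlson's theorem extends the identity to general $\mu$. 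Your SVD route avoids this indirect polynomial/analytic-continuation argument entirely: the Vandermonde factor $|s_1^2-s_2^2|$ is absorbed by the Jacobian, and the remaining $w$-integral $\int_0^{(1-a)^2}w^{\mu+n-1}\,dw$ is elementary and valid for all $\mu+n>0$ at once. The paper's method has the advantage of paralleling the Ginibre case (QR then a one-dimensional integral), while yours buys a direct evaluation with no appeal to analytic continuation. After $\mathcal{A}\sigma$ is determined, the two arguments coincide.
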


\begin{proof}
Neglecting $a$ independent factors as done in Eq.~(\ref{3.1}), the main task is to evaluate
\begin{equation}\label{3.1a}
{\mathcal A}\sigma(a) \propto \int_{{\rm Gl}_{\mathbb R}(2)} dz \,
\delta(a - \sqrt{ \det z z^T})  (\det zz^T)^\nu \det(\eins_{2}-zz^T)^{\mu+n-1}\Theta(\eins_{2}-zz^T) .
\end{equation}
Again, 
we make use of the $2 \times 2$ QR-decomposition. After minor simplifications, this shows
\begin{equation}
\begin{split}
 {\mathcal A}\sigma(a) \propto &\int_0^\infty dt_{11} \int_0^\infty dt_{22}  \int_{-\infty}^\infty dt_{12} \,
t_{11} \delta ( a -t_{11} t_{22} ) (t_{11} t_{22})^{2\nu} \\
&\times ((1 - t_{11}^2)(1 - t_{22}^2) - t_{12}^2)^{\mu+n-1}\Theta(1-t_{11} )
\Theta((1 - t_{11}^2)(1 - t_{22}^2) - t_{12}^2).
\end{split}
\end{equation}
Now we change variables $t_{12} = (1 - t_{11}^2)^{1/2}(1 - t_{22}^2)^{1/2} s$. This gives a factorization of the variable $s$,
contributing only to the proportionality after integration. The integral over $t_{22}$ is then
straightforward, leaving us with
\begin{equation}
 {\mathcal A}\sigma(a) \propto  a^{2\nu}  \int_a^1dt_{11} \,
  ((1 - t_{11}^2)(1 - (a/t_{11})^2))^{\mu+n-1/2}, \qquad 0 < x < 1.
\end{equation}
  For integer values of $\mu + n-1/2$ inspection of the integrand shows that this is a polynomial
  in $a$ of degree $2(\mu + n)$ which furthermore vanishes at $a=1$ with exponent equal to this
  same degree. Hence, we must have
\begin{equation}\label{3.4a}  
 {\mathcal A}\sigma(a) \propto  a^{2\nu} (1 - a)^{2(\mu + n) } \Theta(a(1-a));
\end{equation}
 use of Carlson's theorem guarantees that this evaluation remains true
  for general $\mu$ such that the integral is well
 defined.

 Substituting Eq.~(\ref{3.4a}) in Eq. (\ref{Mellin:def}) shows
\begin{equation}\label{3.5a}
\mathcal M \circ \mathcal A \sigma(2j+1) = \int_0^1  (1 - x)^{2(\mu + n) } x^{2 \nu + 2j} \, dx =
{\Gamma(2 \nu + 2j+1) \Gamma(2\mu + 2n +1 ) \over \Gamma(2\nu + 2\mu + 2n +2 j+2)}.
\end{equation}
Again we substitute both Eqs.~(\ref{3.4}) and (\ref{3.5}) in Eq.~(\ref{jPDF.fixed.poly}) and find Eq.~(\ref{3.2a}) with the constant~\eqref{3.2aa}.
 \end{proof}
 
 The simplest example of the class of random matrices in Corollary \ref{corr3.2} occurs when
 \begin{equation}\label{3.6a}
\imath x =   \eins_n \otimes \tau_2.
\end{equation} 
The corresponding jPDF of the singular values follows by taking the limit $\tilde{a}_k \to 1$ for each
$k=1,\dots, n$, making use of l'H\^opital's rule. Alternatively, one can appeal to Corollary~\ref{corr.lim}.

 \begin{corollary}\label{corr3.3}
 Consider the setting of Corollary \ref{corr3.2}, and choose $\imath x$ according to Eq.~(\ref{3.6a}).
 The singular values of $gxg^T$ have the jPDF
\begin{multline}\label{3.1c}
p_A(a) = \frac{1}{2^{n(n-1)/2}n!}
\prod_{j=1}^{n} 
{\Gamma(2\nu + 2\mu + 2n +2 j) \over \Gamma(j)\Gamma(2 \nu + 2j-1) \Gamma(2\mu + 2n +2-j )} \\
\times
\Delta_n (a^2) \Delta_n(a) \det a^{2 \nu} \det(\eins_n - a)^{2 \mu + n + 1} \Theta(\eins_n-a).
\end{multline}
\end{corollary}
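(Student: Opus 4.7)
The proof takes the confluent limit $\tilde{a}\to(1,\dots,1)$ of the formula in Corollary~\ref{corr3.2} via l'H\^opital's rule. The prefactor $(\det\tilde{a})^{-(2\nu+1)}\to 1$ trivially, and $\Delta_n(\tilde{a}^2)=\Delta_n(\tilde{a})\prod_{j<k}(\tilde{a}_j+\tilde{a}_k)\to 2^{n(n-1)/2}\Delta_n(\tilde{a})$. The determinant in Eq.~\eqref{3.2a} vanishes to the same order as $\Delta_n(\tilde{a})$, and the standard confluent-Vandermonde identity
\[
\lim_{\tilde{a}_k\to 1}\frac{\det\bigl[f_j(\tilde{a}_k)\bigr]_{j,k=1,\ldots,n}}{\Delta_n(\tilde{a})}=\det\!\left[\frac{f_j^{(k-1)}(1)}{(k-1)!}\right]_{j,k=1,\ldots,n}
\]
reduces the task to evaluating derivatives of $f_j(\tilde{a})=(1-a_j/\tilde{a})^{2(\mu+n)}$ at $\tilde{a}=1$; on the support $a_j<1$ the Heaviside factors are identically $1$ and may be dropped.

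These derivatives are rewritten using the operator identity from the proof of Corollary~\ref{corr.lim}, namely $\partial_{\tilde{a}}^{c-1}[\tilde{a}^{-1}\mathcal{A}\sigma(a_b/\tilde{a})]|_{\tilde{a}=1}=\prod_{l=1}^{c-1}(-a_b\partial_{a_b}-l)\mathcal{A}\sigma(a_b)$, applied to the induced-Jacobi weight $\mathcal{A}\sigma(a)\propto a^{2\nu}(1-a)^{2(\mu+n)}$ obtained in the proof of Corollary~\ref{corr3.2}. Since $\prod_{l=1}^{c-1}(-a\partial_a-l)$ is monic of degree $c-1$ in $(-a\partial_a)$, the column~$c$ of the determinant may be replaced by $(-a_b\partial_{a_b})^{c-1}\mathcal{A}\sigma(a_b)$ without changing the determinant. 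A Leibniz expansion then yields
\[
(-a_b\partial_{a_b})^{c-1}\bigl[a_b^{2\nu}(1-a_b)^{2(\mu+n)}\bigr]=a_b^{2\nu}(1-a_b)^{2\mu+2n-c+1}\,R_c(a_b),
\]
with $R_c$ an explicit polynomial of degree $c-1$ in $a_b$.

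Factoring $a_b^{2\nu}(1-a_b)^{2\mu+n+1}$ out of row~$b$ leaves polynomial columns of degree $n-1$. Expanding them in the basis $\{(1-a_b)^m\}_{m=0}^{n-1}$, the $(m,c)$-entry of the resulting coefficient matrix vanishes for $m<n-c$, so the matrix is anti-triangular and its determinant equals $(-1)^{n(n-1)/2}$ times the product of the anti-diagonal entries. Those entries are the values $R_c(1)$, and since only the top-degree operator $(-a\partial_a)^{c-1}$ in the Stirling expansion contributes to $R_c(1)$---the lower-order terms produce an extra factor of $(1-a)$ that vanishes---one finds $R_c(1)=\Gamma(2\mu+2n+1)/\Gamma(2\mu+2n+2-c)$. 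Combining this with $\Delta_n(1-a)=(-1)^{n(n-1)/2}\Delta_n(a)$ yields the factorisation $\Delta_n(a)\prod_b a_b^{2\nu}(1-a_b)^{2\mu+n+1}\prod_{c=1}^{n}\Gamma(2\mu+2n+1)/\Gamma(2\mu+2n+2-c)$.

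It remains to collect the four sources of constants: the $C_n(\nu,\mu)/n!$ prefactor from Corollary~\ref{corr3.2}, the $2^{-n(n-1)/2}$ from the limit of $\prod_{j<k}(\tilde{a}_j+\tilde{a}_k)$, the factorials $\prod_{c=1}^{n}1/(c-1)!=\prod_{j=1}^{n}1/\Gamma(j)$ coming from the confluent limit, and the product of $R_c(1)$ values above. The $\Gamma(2\mu+2n+1)^{n}$ arising from the anti-diagonal factors cancels exactly the same factor hidden inside $C_n(\nu,\mu)$, and after relabelling $c\leftrightarrow j$ the remaining ratio collapses to $\prod_{j=1}^{n}\Gamma(2\nu+2\mu+2n+2j)/[\Gamma(j)\Gamma(2\nu+2j-1)\Gamma(2\mu+2n+2-j)]$, matching the normalisation in Eq.~\eqref{3.1c}. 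The main technical step is recognising the anti-triangular structure of the coefficient matrix---it is what makes the single Vandermonde factor $\Delta_n(a)$ appear---after which the Gamma-function bookkeeping is tedious but routine.
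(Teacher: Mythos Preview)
Your approach---taking the confluent limit $\tilde a\to\mathbf 1$ of Corollary~\ref{corr3.2} via l'H\^opital and then reducing the resulting determinant by column operations to extract $\Delta_n(a)$---is exactly the route the paper indicates (and is equivalent to applying Corollary~\ref{corr.lim} with the Jacobi weight). The anti-triangular argument and the identification $R_c(1)=\Gamma(2\mu+2n+1)/\Gamma(2\mu+2n+2-c)$ are correct, and the bookkeeping of constants checks out.

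There is, however, one genuine inconsistency in the write-up that you should fix. You first send the prefactor $(\det\tilde a)^{-(2\nu+1)}\to1$ and declare that the confluent limit reduces to derivatives of $f_j(\tilde a)=(1-a_j/\tilde a)^{2(\mu+n)}$; but then you invoke the operator identity for $\partial_{\tilde a}^{\,c-1}\bigl[\tilde a^{-1}\mathcal A\sigma(a_b/\tilde a)\bigr]\big|_{\tilde a=1}$, which is a \emph{different} function (it carries the extra factor $a_b^{2\nu}\tilde a^{-(2\nu+1)}$). As written, the identity does not apply to your $f_j$. The final answer is still right, but only because of two compensating facts you do not spell out: (i) there is an analogous identity $\partial_{\tilde a}^{\,c-1}h(a/\tilde a)\big|_{\tilde a=1}=\prod_{l=0}^{c-2}(-a\partial_a-l)\,h(a)$ valid for any $h$, and (ii) the external $(\det a)^{2\nu}$ you left outside the determinant combines with column operations to give
\[
(\det a)^{2\nu}\det\!\bigl[(-a_b\partial_{a_b})^{c-1}(1-a_b)^{2(\mu+n)}\bigr]
=\det\!\bigl[(-a_b\partial_{a_b})^{c-1}\mathcal A\sigma(a_b)\bigr],
\]
since $(-a\partial_a)^{c-1}[a^{2\nu}g(a)]=a^{2\nu}(-a\partial_a-2\nu)^{c-1}g(a)$ and the shift by $2\nu$ is absorbed by column operations. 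The cleanest repair is either to pull the factors $a_j^{2\nu}\tilde a_k^{-(2\nu+1)}$ \emph{into} the determinant \emph{before} taking the limit---so the entries are exactly $\tilde a_k^{-1}\mathcal A\sigma(a_j/\tilde a_k)$ and the quoted identity applies verbatim---or simply to start from Corollary~\ref{corr.lim} with $\mathcal A\sigma(a)\propto a^{2\nu}(1-a)^{2(\mu+n)}$ and $\mathcal M\!\circ\!\mathcal A\sigma$ from Eq.~(\ref{3.5a}), bypassing Corollary~\ref{corr3.2} altogether.
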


This is recognised as a particular example (the case $\theta = 2$ in the
 notation of \cite{FW15}) of the Jacobi Muttalib--Borodin ensemble \cite{Mu95,Bor99}.
 Ref.~ \cite{FI17}  shows how the corresponding biorthogonal polynomials relate to the
 theory of the Selberg integrals, while a double integral form of the correlation kernel is
 given in \cite{FW15}.

 \subsection{Products with Induced Real Jacobi  Matrices}\label{sec:prod}
 
 Let  $X_i$ be drawn from the induced Jacobi ensemble of random real $2n \times 2n$ matrices,
with parameters $(\nu_j, \mu_j)$ 
as specified at the beginning of subsection~\ref{sec:Jac}. Let $A \in H$ be a member of a polynomial ensemble,
 and consider the random product matrix
 \begin{equation}\label{rp}
 X_M \cdots X_1 A X_1^T \cdots X_M^T.
 \end{equation}
 Iterative application of Corollary \ref{cor:jPDF.poly} tells us that Eq.~(\ref{rp}) is itself a polynomial ensemble.

 \begin{corollary}\label{corr3.4}
 Let $A$ be drawn form a polynomial ensemble with the jPDF for the singular values 
given by Eq.~(\ref{poly.def}) supported on $0 < a_j < 1$, and consider the random product matrices~(\ref{rp}). The singular values 
 $\{ a_j \}_{j=1}^n$ have the jPDF
  \begin{equation}\label{4.14}
 {C_n[w] \over n!} \prod_{j=1}^M {1 \over C_n(\nu_j, \mu_j)}  \Delta(a^2) \det
 \Big [ g_{j-1}^{(M)}(a_k)  \Theta({a_k (1 - a_k)}) \Big ]_{j,k=1}^n,
 \end{equation}
 where $C_n(\nu,\mu)$ is given by Eq.~(\ref{3.2aa})
  and $g_j^{(M)}$ is defined recursively according to
   \begin{equation}\label{4.15}
g_{j}^{(M)}(a)  = \int_a^1    {d {b} \over {b}} \, b^{2 \nu_j} (1 - b)^{2(\mu_j+n)}
g_{j-1}^{(M)} \Big ( {a \over b} \Big ).
\end{equation}
\end{corollary}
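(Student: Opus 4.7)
The strategy is to iterate Corollary~\ref{cor:jPDF.poly}, $M$ times, by induction on $M$. The base case $M=0$ is the assumed polynomial-ensemble structure of $A$. For the inductive step, let $Y_{M-1} := X_{M-1}\cdots X_1 A X_1^T \cdots X_{M-1}^T$, which by the inductive hypothesis is a polynomial ensemble on $H$ whose weights are $(M-1)$-fold iterated Mellin convolutions of the original $w_b$'s. Since the induced Jacobi ensemble is a factorizing ensemble on $G$ (as established at the start of Section~\ref{sec:Jac}), the product $X_M Y_{M-1} X_M^T$ fits the hypotheses of Corollary~\ref{cor:jPDF.poly} verbatim, and the output is again a polynomial ensemble whose weights are those of $Y_{M-1}$ Mellin-convolved once more with $\mathcal{A}\sigma_M$.

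Next I would match these iterated convolutions to the functions $g_{j-1}^{(M)}$ defined by~\eqref{4.15}. Substituting the explicit form~\eqref{3.4a} for $\mathcal{A}\sigma_j$ into the Mellin convolution~\eqref{mel.con} yields
\begin{equation*}
\mathcal{A}\sigma_j \circledast f(a) = \int_0^\infty \frac{db}{b} \, b^{2\nu_j}(1-b)^{2(\mu_j+n)} \Theta(b(1-b)) \, f(a/b).
\end{equation*}
If $f$ is supported in $(0,1)$, then $a/b<1$ forces $b>a$, collapsing the integration range to $(a,1)$ and reproducing~\eqref{4.15} exactly. Inductively, every iterated weight therefore remains supported in $(0,1)$, which is precisely what the factor $\Theta(a_k(1-a_k))$ in~\eqref{4.14} records.

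Finally, the accumulated normalization is tracked by invoking~\eqref{3.5a}: at the $i$-th inductive step Corollary~\ref{cor:jPDF.poly} multiplies the prefactor by $1/\prod_{j=1}^{n} \mathcal{M}\circ\mathcal{A}\sigma_i(2j-1)$, and the explicit Beta-integral evaluation~\eqref{3.5a} together with the definition~\eqref{3.2aa} of $C_n(\nu_i,\mu_i)$ let one re-express this product in the stated closed form. The main obstacle is notational rather than mathematical: the index $j$ in~\eqref{4.15} plays a double role, labelling both the Jacobi parameters $(\nu_j,\mu_j)$ and the rows of the determinantal basis in~\eqref{4.14}. The natural reading is that~\eqref{4.15} is a recursion in the Jacobi-factor index (so that $g_{j-1}^{(M)}$ in~\eqref{4.14} denotes the full iterated convolution with all $M$ Jacobi kernels applied to the seed $w_j$), and one must be careful to confirm that the combinatorics of normalizing the Jacobi weight in~\eqref{3.4a} (which is given only up to proportionality) match the convention used to write the prefactor in~\eqref{4.14}.
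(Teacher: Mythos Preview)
Your approach is exactly the one the paper intends: the text immediately preceding the corollary says only ``Iterative application of Corollary~\ref{cor:jPDF.poly} tells us that Eq.~(\ref{rp}) is itself a polynomial ensemble,'' and your induction on $M$ unpacks this in the natural way. Your caution about the double role of the index $j$ in~\eqref{4.15} and about the proportionality constant in~\eqref{3.4a} is well placed --- these are precisely the bookkeeping points one must resolve to pin down the prefactor in~\eqref{4.14}, and the paper itself does not spell them out.
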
 

The recurrence~(\ref{4.15}), with 
 \begin{equation}\label{4.15a}
2 \nu_j \mapsto \nu_j,  \qquad  2 (\mu_j +n )\mapsto \mu_j + n
\end{equation}
(where on the RHS of the second mapping $\mu_j:= m_j - 2n - \nu_j - 1$ in the notation of \cite{KKS15}),
 is known from earlier studies \cite{KKS15,Ku16} of complex random
product matrices $X_M \cdots X_1 A X_1^\dagger \cdots X_M^\dagger$ where each $X_i$ is drawn
from an induced complex Jacobi ensemble (truncated Haar distributed unitary matrices) and $A$ is
from a polynomial ensemble.

Perhaps the simplest choice of $A$ is from the ensemble specified in Corollary \ref{corr3.3}.
Then 
 \begin{equation}\label{4.16}
w_b(x) = x^{2 \nu+b - 1} (1 - x)^{2 \mu+ n + 1} \Theta(x(1-x)),
\end{equation}
and after relabelling
the product matrices, Eq.~(\ref{rp}) is equivalent to
 \begin{equation}\label{rp1}
 X_M \cdots X_1  \Big ( \eins_n \otimes \begin{bmatrix} 0 & i \\ -i & 0 \end{bmatrix} \Big ) X_1^T \cdots X_M^T,
 \end{equation}
 where each $X_i$ is drawn from the induced Jacobi ensemble of random $2n \times 2n$ real matrices
and parameters $(\nu_j, \mu_j)$. The relationship between the recurrence obtained in 
\cite{KKS15,Ku16}, and that implied by Eq.~(\ref{4.15}), gives us a corresponding relationship between
the jPDF~(\ref{rp1}) and a product ensemble involving matrices from the induced
complex Jacobi ensemble which is in agreement with~\ref{corr.lim}.

\begin{proposition}\label{prop4.5}
Let $Y_j$, $j=1,\dots,2M$ be drawn from the induced complex Jacobi ensemble with
probability densities proportional to 
\begin{equation}
(\det Y Y^\dagger)^{\nu_i - 1/2} \det (\eins_n - Y Y^\dagger)^{2\mu_i}
\Theta(\eins_n -  Y Y^\dagger) \ {\rm and}\ 
(\det Y Y^\dagger)^{\nu_i } \det (\eins_n - Y Y^\dagger)^{2\mu_i}
\Theta(\eins_n -  Y Y^\dagger),
\end{equation}
for $i=2j-1$ and $i=2j$, respectively ($j=1,\dots,M$), and consider the random product matrix
 \begin{equation}\label{YY}
  Y_{2M} \cdots Y_1  Y_1^\dagger \cdots Y_{2M}^\dagger.
  \end{equation}
  Then the jPDFs of the singular values $y\in A$ of Eq.~(\ref{YY}) and the singular values $x\in A$ of
  Eq.~(\ref{rp1}) are identical.
\end{proposition}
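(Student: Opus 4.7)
The plan is to express both jPDFs as polynomial ensembles whose weight sequences are generated by the same recurrence with the same initial data; uniqueness of the recursive construction then forces the two jPDFs to coincide.

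First I would apply Corollary~\ref{corr3.4} to the product~(\ref{rp1}). The matrix $\imath \eins_n \otimes \tau_2$ is the degenerate case treated in Corollary~\ref{corr3.3}, so its polynomial-ensemble weights are exactly those of~(\ref{4.16}). Iterating~(\ref{4.15}) against this initial weight gives the singular-value jPDF of~(\ref{rp1}) in the explicit polynomial-ensemble form~(\ref{4.14}), with recursively defined weights $g_{j-1}^{(M)}$.

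Second I would invoke the results of~\cite{KKS15,Ku16} on complex products of induced Jacobi matrices. There the squared singular values of $Y_{2M}\cdots Y_1 Y_1^\dagger \cdots Y_{2M}^\dagger$ form a polynomial ensemble whose weights are generated by a Mellin convolution against a Jacobi kernel of the form $b^{\nu_j}(1-b)^{\mu_j + n}$. Under the parameter rescaling~(\ref{4.15a}) — which reflects the correspondence between a $2n\times 2n$ real induced Jacobi matrix and an $n \times n$ complex induced Jacobi matrix, with the indicated halving of exponents — the complex recurrence becomes identical with~(\ref{4.15}) after a harmless relabelling of the integration variable.

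Third I would pin down the base case. On the real side, the starting weight is~(\ref{4.16}), arising from Corollary~\ref{corr3.3} (equivalently from Corollary~\ref{corr.lim} applied to $\imath \eins_n \otimes \tau_2$). On the complex side, the analogous starting object is the squared-singular-value density of $Y_1 Y_1^\dagger$, which is a Jacobi weight whose exponents match those of~(\ref{4.16}) under the mapping~(\ref{4.15a}). Since both sequences of polynomial-ensemble weights then satisfy the same recurrence with the same initial datum, they agree at every step $M$, and so do the resulting jPDFs.

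The main obstacle will be the careful bookkeeping in the second and third steps: one must keep track of the factors of two, the shift by $n$, and the $\pm 1/2$ differences between the even- and odd-indexed $Y_i$ in the two sequences of complex Jacobi parameters. In particular one has to check that the normalization constants $C_n(\nu_j,\mu_j)$ of~(\ref{4.14}) are reproduced exactly by the product of Mellin transforms analogous to~(\ref{3.5a}) entering the complex recurrence after~(\ref{4.15a}). Once both the base case and the recurrence step are aligned, the equality of the two jPDFs is immediate.
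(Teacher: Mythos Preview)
Your plan has a genuine gap in the second step. You assert that under the relabelling~(\ref{4.15a}) ``the complex recurrence becomes identical with~(\ref{4.15})'', and then want to match step-by-step. But the two recurrences are \emph{not} identical: one application of~(\ref{4.15}) convolves against the weight $b^{2\nu_j}(1-b)^{2(\mu_j+n)}$, whereas one application of the complex recurrence from~\cite{KKS15} convolves against $b^{\nu_j}(1-b)^{\mu_j+n}$ (in the notation on the right of~(\ref{4.15a})). These are different convolution kernels, and a single real step does not equal a single complex step under any harmless relabelling. This is also why the complex product~(\ref{YY}) has $2M$ factors, not $M$, and why the odd- and even-indexed $Y_i$ carry the shifted exponents $\nu_i-1/2$ and $\nu_i$: one real step corresponds to \emph{two} complex steps, not one.

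The mechanism that makes this correspondence work --- and which your proposal does not supply --- is the duplication formula for the gamma function. The paper's proof does not match recurrences directly; instead it first solves~(\ref{4.15}) with initial datum~(\ref{4.16}) via the Mellin-type contour representation~(\ref{4.17}), then substitutes $s\mapsto 2s$ and applies $\Gamma(2z)\propto\Gamma(z)\Gamma(z+\tfrac12)$ to each $\Gamma(s+2\nu_j)$ and $\Gamma(s+2\nu_j+2\mu_j+2n+1)$, arriving at~(\ref{4.17b}). Only after this doubling does the span coincide (up to the factor coming from $x\mapsto x^2$) with the span of weights $\tilde g_j^{(2M)}(x^2)$ for the $2M$-fold complex product with the alternating parameters. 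Your ``careful bookkeeping'' paragraph correctly senses that the $\pm\tfrac12$ shifts need explanation, but the explanation is precisely this duplication step, and without it the inductive matching you propose cannot be carried out.
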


\begin{proof}
As noted the recurrence (\ref{4.15}), after the identification (\ref{4.15a}), is known from \cite{KKS15}. Moreover
with $g_1^{(M)}(x) = w_b(x)|_{\nu = \nu_1 \atop \mu = \mu_1}$, the solution of the recurrence can be
read off from formulas in  \cite[Eqs.~(2.25)--(2.27)]{KKS15}. The end result is that the linear span of $\{ g_j^{(M)}(x) \}$
consists of all functions of the form
 \begin{equation}\label{4.17}
{1 \over 2 \pi i} \int_C {q(s) \prod_{j=1}^{M} \Gamma(s + 2 \nu_j) \over
\prod_{j=1}^M \Gamma(s + 2\mu_j + 2 \nu_j + 2n +  1) } x^{-s} \, ds, \qquad 0 < x < 1,
 \end{equation}
where $q(s)$ is a polynomial of degree smaller than $n$, $C$ is a contour starting and ending at $-\infty$
and encircling the negative real axis.

Replacing $s$ by $2s$ and making use of the duplication formula for the gamma function, it
is also true that 
the linear span of $\{ g_j^{(M)}(x) \}$
consists of all functions of the form
 \begin{equation}\label{4.17b}
{1 \over 2 \pi i} \int_C {q(s) \prod_{j=1}^{M} \Gamma(s +  \nu_j) \Gamma(s +  \nu_j + 1/2) \over
\prod_{j=1}^M \Gamma(s + \mu_j +  \nu_j + n +  1/2)  \Gamma(s + \mu_j +  \nu_j + n +  1)} x^{-2s} \, ds, \qquad 0 < x < 1.
 \end{equation}
According to \cite[Eq.~(2.27)]{KKS15}, after replacing $s+1/2$ by $s$, this span in turn
is identical to $y$ times the
linear span of $\{\tilde{g}_j^{(2M)}(x^2)\}$, where $\Delta(x) \det [ \tilde{g}_j^{(2M)}(x_k) ]_{j,k=1}^n$ is up
to normalisation the jPDF of the singular values of the random product matrix~(\ref{YY}), hence 
\begin{equation}
\Delta(x^2) \det [  g_j^{(M)}(x_k) ]_{j,k=1}^M \propto \prod_{l=1}^n x_l \, \Delta(x^2) 
\det [ \tilde{g}_j^{(2M)}(x^2_k) ]_{j,k=1}^n.
\end{equation}
The left hand side is the jPDF of the singular values of the random matrix~(\ref{rp1}), while the right hand side is the 
 the jPDF of the squared eigenvalues of the matrix~(\ref{YY}), this establishing the 
stated result.
\end{proof}

This result is the analogue for products involving truncated orthogonal and unitary matrices of the result
\cite[Corollary 1.2]{FILZ17} relating the first of the random product matrices in
Eq.~(\ref{mn1}) to an jPDF of the singular values for products a complex Gaussian matrices;
recall the concluding paragraph of subsection~\ref{sec:Gin}. We remark that the corresponding bi-orthogonal
system follows from the results of \cite{KKS15}, as does the double contour integral form
of the correlation kernel, and are consistent
with the forms implied by Theorem~\ref{thm:biorth.poly}.

\section{Conclusions}\label{sec:conclusio}

We constructed a theoretical basis for dealing with the multiplicative convolution corresponding to the action of the general linear group $G={\rm Gl}_{\mathbb{R}}(2n)$ on the even dimensional real antisymmetric matrices $H={\rm o}(2n)$, i.e. $(g,x)\mapsto\,gxg^T$ with $g\in G$ and $x\in H$. This approach is based on harmonic analysis~\cite{He00} and follows the same ideas as already employed for various convolutions of the additive~\cite{KR16,Ki17} and the multiplicative~\cite{KK16a,KK16b} type. The only requirement to apply these ideas is that the probability density $P_H(x)$ has to be $K$-invariant, i.e. $P_H(x)=P_H(kxk^T)$ for all $x\in H$ and $k\in K$. For the situation of $x$ being either fixed or drawn from a polynomial ensemble and $g$ being drawn  from a factorizing ensemble, see Definition~\ref{def:poly.ens}, we were able to make the jPDF of the singular values of $y=gxg^T$, the corresponding bi-orthogonal system and the kernel explicit.

We want to emphasize, too, that the multiplication with rectangular matrices is implicitly covered by our results. Due to the $K$ invariance the spectral statistics of a $2n\times 2N$ dimensional rectangular real matrix $g$ is bijectively related to a square random matrix, see~\cite{IK14}. To reduce the general setting $gxg^T$ with $x$ of dimension $2N\times 2N$ to the situation considered in the present work, one has to distinguish two cases, either $N>n$ or $n>N$. When $n>N$ we can decompose
\begin{equation}
g=U\left[\begin{array}{c} g' \\ 0 \end{array}\right],\ U\in{\rm O}(2n)\ {\rm and}\ g'\in{\rm Gl}(2N),
\end{equation}
where the induced distribution of $g'$ inherits the $K$-invariance of $g$. In particular $gxg^T$ and $g'x{g'}^T$ share the same spectral statistics of their singular values apart from the generic zeros of $gxg^T$. The case $N>n$ is based on the decomposition $g=(g',0)U$ with $U\in{\rm O}(2N)$ and $g'\in{\rm Gl}(2n)$. Then we have to find first the jPDF of the singular values of the projected matrix $x'=\Pi_{2n,2N}UxU^T\Pi_{2n,2N}^T$ which can be recursively derived by applying Proposition~\ref{propW1} $N-n$ times. In the second step we can pursue the standard approach because $g'x'g'^T$ is exactly a product of the type we originally considered. In the particular cases of Section 4, these
constructions are intimately related  to the induced ensembles discussed therein \cite{FBKSZ12}.

An open question is how to generalize the ideas, we worked out here, to the setting of odd dimensional antisymmetric matrices. Obviously the spherical function has to be modified since the additional generic zero eigenvalue has to be taken care of. The theory for the multiplicative action of the complex general linear group on the Hermitian matrices is also non trivial. The open problem is again the corresponding spherical function. The spherical function is originally defined for positive definite Hermitian matrices~\cite{FK94,He00,KK16a}. How has the sign to be incorporated in the definition since it cannot be taken in the exponentiation otherwise we lose the analyticity of the exponents $s$. The situation is much simpler for the the quaternion general linear group ${\rm Gl}_{\mathbb{H}}(2n)$ and its action on the anti-self-dual matrices ${\rm usp}(2n)$ which is the Lie algebra of the unitary symplectic group ${\rm USp}(2n)$. We do not expect any complication in the latter case and the ideas pursued here should carry over easily.

\paragraph{Acknowledgements} 
We acknowledge support by the Australian Research Council through grant DP170102028 (PJF), 
the ARC Centre of Excellence for Mathematical and Statistical Frontiers (PJF,JRI,MK), and the German research council (DFG) via the CRC 1283: ``Taming uncertainty and profiting from randomness and low regularity in analysis, stochastics and their applications". Moreover MK wants to thank the University of Melbourne for its hospitality where this project was carried out.

\appendix

\section{Appendix}\label{AppA}

In this appendix we want to prove the explicit expression of the spherical function $\Phi$ as stated in Theorem~\ref{thm:spher.func}. We do this in three steps. First we consider the action of a corank $2$ projection on even dimensional antisymmetric matrices, see subsection~\ref{sec:proj}. We use this projection to construct a recurrence relation in the dimension for the group integral in the denominator of Eq.~\eqref{spher:as}, see subsection~\ref{sec:rec}, which is solved in subsection~\ref{sec:proof}.

\subsection{Eigenvalue PDF for a corank $2$ projection}\label{sec:proj}

For $C,X\in H$ both  $2n \times 2n$ real antisymmetric matrices, we define the matrix-valued
Fourier transform of $X$ by
\begin{equation}\label{X0}
\mathcal{F}P_H( C)=\int \exp\left[\frac{\imath}{2}\Tr X  C\right] P_H(X)dX
\end{equation}
Let the singular values of $C$ and $X$ be denoted $c\in A$ and $x\in A$,
respectively. Consider the circumstance that $\mathcal{F}P_H(C) = \mathcal{F}P_H(k C k^T)$ for all $k \in
K = {\rm O}(2n)$ and write $\hat{f}_X(c) = \mathcal{F}P_H( \imath c \otimes \tau_2)$.
With $p_A(x)$ denoting the jPDF of the singular values of $X$, by making use of the Harish-Chandra group
integral~ (\ref{5}) for $K={\rm O}(2n)$, it was shown in \cite[Prop.~3.4, with an extra factor of $1/n!$ for the
convention that the eigenvalues are not ordered]{FILZ17} that
\begin{equation}\label{w.2}
 p_A(x) = \prod_{j=0}^{n-1}\frac{(-1)^j2}{\pi(2j)!} \,
 \Delta_n(x^2) {1 \over n!} \int_{0}^\infty dc_1 \cdots  \int_{0}^\infty dc_n \,
 \hat{f}_X(c)
 \Delta_n(c^2) \prod_{j=1}^n \cos(x_j c_j).
 \end{equation}
 We will use Eq.~(\ref{w.2}), with $n \mapsto n - 1$, to deduce the jPDF for the corank 2 projection
 given by the $(2n - 2) \times (2n - 2)$ random matrix
 \begin{equation}\label{X1}
 X = \Pi_{2n-2,2n} k (\imath a \otimes \tau_2 ) k^T \Pi_{2n-2,2n}^T, \qquad
 k \in K = {\rm O}(2n),
 \end{equation}
 where $a = (a_1,\dots, a_n) \in A$ is fixed.
 
\begin{remark}
We want to underline that to be very rigorous we have to introduce a regularizing function in Eq.~\eqref{w.2} like a Gaussian or a function similar to Eq.~\eqref{xi.def} with a vanishing parameter $\epsilon\to0$ to guarantee the absolute integrability since $\hat{f}_X(c)\Delta_n(c^2)$ is not necessarily an $L^1$-function. In general it can grow like polynomial on $\mathbb{R}^n$. The particualr problem to be overcome is interchange of the inverse Laplace transform and the expectation value~\eqref{X0}. In the present case the function $\hat{f}_X(c) \Delta_n(c^2)$ is an $L^1$-function on $\mathbb{R}$, see below. Thus we can neglect these regularizing terms.
\end{remark}

   \begin{proposition}\label{propW1}
   Let $x = \{x_j \}_{j=1}^{n-1}\in A$ denote the singular values of the random matrix~(\ref{X1}), and let $p_A(x|a)$
   denote the corresponding jPDF. We have
  \begin{equation}\label{fE0}   
  p_A(x|a) = { (2n-2)!  \over (n-1)!} {\Delta_{n-1}(x^2) \over \Delta_n(a^2)} 
  \det   \left [ {  \displaystyle 1\quad\cdots\quad 1 \atop (a_k - x_j) \Theta ( a_k - x_j)
   } \right]_{j=1,\dots, n - 1 \atop k =1,\dots, n},
  \end{equation}
 where $\Theta$ denotes the Heaviside step function.  
    \end{proposition}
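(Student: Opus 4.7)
The strategy is to apply formula~(\ref{w.2}) with $n\mapsto n-1$ to the specific random matrix~(\ref{X1}), which requires computing its matrix-valued Fourier transform $\hat f_X(c)$ and then evaluating the resulting Fourier integral. First I would exploit the identity $\Tr (\Pi M \Pi^T)(\imath c\otimes\tau_2) = \Tr M\,(\imath \tilde c\otimes\tau_2)$, where $\tilde c = (c_1,\dots,c_{n-1},0)$ is the padding of $c$ to $2n$ dimensions. This reduces $\hat f_X(c)$ to a Harish-Chandra-type integral on $\gO(2n)$ with the two arguments $\imath a\otimes\tau_2$ and $\imath\tilde c\otimes\tau_2$. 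Applying~(\ref{5}) together with the analytic continuation $y\mapsto\imath y$ (which turns $\cosh$ into $\cos$ and yields a sign $(-1)^{n(n-1)/2}$ from $\Delta_n(-\tilde c^2)$) gives the closed form
\begin{equation*}
\hat f_X(c)=\frac{(-1)^{n(n-1)/2}\prod_{j=0}^{n-1}(2j)!}{\Delta_n(a^2)\,\Delta_n(\tilde c^2)}\,\det\bigl[\cos(a_i\tilde c_j)\bigr]_{i,j=1,\dots,n}.
\end{equation*}
Factoring $\Delta_n(\tilde c^2)=(-1)^{n-1}\prod_{j=1}^{n-1}c_j^2\cdot\Delta_{n-1}(c^2)$ then cancels the $\Delta_{n-1}(c^2)$ coming from~(\ref{w.2}) and leaves an integrand proportional to $\det[\cos(a_i\tilde c_j)]/\prod c_j^2$ times $\prod_j\cos(x_j c_j)$.

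The next step is the key determinantal manipulation. Because the last column of $[\cos(a_i\tilde c_j)]$ equals $\mathbf 1$ (the entries are $\cos 0=1$), subtracting it from each other column replaces $\cos(a_i c_j)$ by $\cos(a_i c_j)-1$ for $j<n$; the factor $c_j^{-2}$ can then be absorbed into column $j$, producing the finite quantity $(\cos(a_i c_j)-1)/c_j^{2}$ and removing the apparent singularity at $c_j=0$. I would then expand the resulting determinant along its last column (of $1$'s) to write it as $\sum_{i=1}^n(-1)^{i+n}$ times an $(n-1)\times(n-1)$ minor, and apply Andr\'eief's identity column-by-column to reduce the multidimensional integral to
\begin{equation*}
\det\!\left[\int_0^\infty\frac{(\cos(a_{i'}c)-1)\cos(x_j c)}{c^2}\,dc\right]_{i'\neq i,\,j=1,\dots,n-1}.
\end{equation*}

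The one-dimensional integral is evaluated by writing $\cos(ac)\cos(xc)$ as $\tfrac12(\cos((a-x)c)+\cos((a+x)c))$ and invoking the standard identity $\int_0^\infty(\cos(tc)-1)/c^2\,dc=-\pi|t|/2$. A short case analysis in $a\gtrless x>0$ yields exactly $-\tfrac{\pi}{2}(a-x)\Theta(a-x)$, which is precisely the entry appearing in~(\ref{fE0}). Re-assembling the cofactor sum $\sum_i(-1)^{i+n}\det(\cdots)$ recognizes the Laplace expansion along a column of $1$'s, giving (up to a sign $(-1)^{n-1}$ absorbed by moving the $\mathbf 1$-column to the top row) the $n\times n$ determinant stated in the proposition.

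The final paragraph would be purely bookkeeping: collecting the constant $\prod_{j=0}^{n-2}(-1)^j 2/(\pi(2j)!)$ from~(\ref{w.2}), the factor $\prod_{j=0}^{n-1}(2j)!$ from Harish-Chandra, the sign $(-1)^{n(n-1)/2+n-1}$ from $\Delta_n(\tilde c^2)$ and $(-1)^{n-1}$ from the column reshuffling, and the factor $(\pi/2)^{n-1}$ from the 1D integrals. The powers of $\pi$ and $2$ and the factorials collapse to $(2n-2)!/(n-1)!$, and the total sign $(-1)^{(n-1)^2+n-1}=(-1)^{n(n-1)}=1$ is trivial, yielding~(\ref{fE0}). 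The main obstacle I anticipate is not any conceptual difficulty but rather the careful tracking of signs arising from the analytic continuation $y\mapsto\imath y$, the pole-cancelling column operations, and the reconstruction of the bordered determinant; a secondary subtlety, already flagged in the remark in the excerpt, is justifying the interchange of the Fourier inversion with the group average, which is fine here because after the above manipulations the integrand is absolutely integrable.
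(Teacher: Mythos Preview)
Your proposal is correct and follows essentially the same route as the paper: compute $\hat f_X(c)$ via the Harish-Chandra integral~(\ref{5}) with one argument degenerated to zero, substitute into~(\ref{w.2}) with $n\mapsto n-1$, perform a row/column operation to replace $\cos(a c)$ by $1-\cos(a c)$ and absorb the $c^{-2}$, and then evaluate the resulting one-dimensional integrals to produce $(a_k-x_j)\Theta(a_k-x_j)$. The only cosmetic difference is that the paper subtracts the row of $1$'s and integrates row-by-row by multilinearity, whereas you subtract the column of $1$'s, Laplace-expand along it, integrate each minor, and then reassemble---your detour is unnecessary but harmless.
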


 \begin{proof}
 According to the definition (\ref{X0}), with $X$ given by Eq.~(\ref{X1}), upon use of the cyclic
 property of the trace and the definition of $\Pi_{2n-2,2n}$ we have
  \begin{equation}\label{fE}
  \mathcal{F}P_H(C) = \int_K \exp\left[- {1 \over 2} {\rm Tr} \, k (a \otimes \tau_2 ) k^T [ C \oplus {\rm diag} \, (0,0)]\right]
 d^*k.
  \end{equation}
 We denote with denoted $c=(c_1,\dots, c_{n-1})\in A$ the singular values of $C$. The matrix integral corresponding
  to the average (\ref{fE}) is an example of the Harish-Chandra group integral~(\ref{5}) for ${\rm O}(2n)$. 
  where $ C \oplus {\rm diag} \, (0,0) $ is to be thought of as $C \oplus \begin{bmatrix} 0 & \epsilon \\
  - \epsilon & 0 \end{bmatrix}$ with $\epsilon \to 0$.
  As such it has the evaluation
 \begin{equation}\label{fE1}
 \hat{f}_X(c) =\mathcal{F}P_H(\imath c\otimes\tau_2)=  \prod_{j=0}^{n-1}(2j)!
   \frac{(-1)^{n(n-1)/2}}{\Delta_n(a^2)\Delta_{n-1}(c^2) \prod_{l=1}^{n-1} c_l^2}
\det   \bigg [ {1\quad\cdots\quad 1 \atop
 \cos (c_j a_k)} \bigg ]_{j=1,\dots, n - 1 \atop k =1,\dots, n}.
\end{equation} 
Substituting Eq.~(\ref{fE1}) in Eq.~(\ref{w.2}), the latter with $n \mapsto n -1$, we obtain
\begin{equation}
p_A(x|a) = { (2n-2)!  \over (n-1)!} {\Delta_{n-1}(x^2) \over \Delta_n(a^2)}
\left(\prod_{j=1}^{n-1} \frac{-2}{\pi c_j^2}\right)\int_A d c
\det   \left[ {1\quad\cdots\quad 1 \atop
 \cos (c_j a_k)} \right ]_{j=1,\dots, n - 1 \atop k =1,\dots, n}
 \prod_{j=1}^{n-1} \cos x_j c_j.
 \end{equation}
 
 In preparation for further simplification, we subtract the first row of the determinant from each
 of the subsequent rows. The integrations can then be carried out row-by-row to give
\begin{equation}
 p_A(x|a) = { (2n-2)!  \over (n-1)!} {\Delta_{n-1}(x^2) \over \Delta_n(a^2)}
 \det   \left[ {  \displaystyle 1\quad\cdots\quad 1 \atop \displaystyle
{2 \over \pi}  \int_0^\infty {  \cos(x_j c) \, (1 - \cos (a_k c) ) \over c^2} \, dc} \right ]_{j=1,\dots, n - 1 \atop k =1,\dots, n}.
\end{equation}
Evaluating the integral with the help of the residue theorem gives
\begin{equation}
 {2 \over \pi}  \int_0^\infty {  \cos x_j c \, (1 - \cos a_k c ) \over c^2} \, dc =
  (a_k - x_j) \Theta ( a_k - x_j)
\end{equation}
 which concludes the proof.

 \end{proof}
 
 \begin{remark}\
 
 {\bf (a)} Let us order $a$ and $x$ as $a_1<a_2<\ldots<a_n$ and $x_1<x_2<\ldots<x_{n-1}$. Examining the determinant we see that in the intervals $[0,a_1]$ and $[a_{n-1},a_n]$ can be maximally occupied by a single singular value, namely $x_1$ and $x_{n-1}$, respectively. More singular values would yield that either the first three rows or last two rows become linearly dependent. Furthermore in each interval $[a_j,a_{j+1}]$ we cannot have more than two singular values $x_j$ because of the same reason (three or more rows become linearly dependent). The same also applies to the intervals $[x_j,x_{j+1}]$ which can maximally comprise two singular values of $a$ (three or more columns become linearly dependent when violated). A detailed analysis tells us that those ordering where the determinant does not vanish create a matrix which is upper triangular with maximally one additional lower diagonal.  Once a combined ordering of $a$ and $x$ is given as $0<\alpha_1<\alpha_2<\ldots<\alpha_{2n-2}<\alpha_{2n-1}$ with $\alpha$ a permutation of $(a,x)$, the determinant evaluates to $\prod_{j=1}^{n-1}(\alpha_{2j+1}-\alpha_{2j})$ which directly follows from the particular form of the matrix. The reason why $\alpha_1$ does not appear in the product follows from the following case discussion. We have either $x_1<a_1$ which means that we can subtract the first row times $x_1$ with the second row cancelling $x_1$ or it is $a_1<x_1$ where $a_1$ immediately drops out because of the Heaviside step-function which vanishes in this case.
 
 {\bf (b)} Let $b = {\rm diag}(b_1,\dots, b_n) \in \mathbb R^n$, $u \in K = {\rm U}(n)$, and consider the corank $1$
 projection given by the $(n-1) \times (n-1)$ random matrix $\Pi_{n-1,n} u b
 u^\dagger \Pi_{n-1,n}^T$. The method used proving Proposition \ref{propW1}, with the role
 of the Harish-Chandra group integral for ${\rm O}(2n)$ now played by its unitary (${\rm U}(n)$) counter part, can be adapted to show that the jPDF of the eigenvalues of this random matrix is
 equal to
\begin{equation}
 {\Delta_{n-1}(x) \over \Delta_n(b)}
 \det   \bigg [ {1\quad\cdots\quad 1 \atop \Theta(b_k - x_j) } \bigg ]_{j=1,\dots,n-1 \atop k=1,\dots,n} =
  {\Delta_{n-1}(x) \over \Delta_n(b)}
  \chi_{b_1 < x_1 < b_2 < \cdots < x_{n-1} < b_n},
\end{equation}
  where $\chi_J$ is the indicator function for $J$. This result is well known \cite{Ba01}.
 
 \end{remark}

 \subsection{A recurrence relation}\label{sec:rec}
 
We denote the numerator in Eq.~(\ref{spher:as}) by
 \begin{equation}\label{a:f.def}
f_n(s,x)=\int_{K={\rm O}(2n)}d^*k\prod_{j=1}^{n} [\det \Pi_{2j,2n}kxk^T\Pi_{2j,2n}^T]^{(s_j-s_{j+1})/2-1}.
\end{equation}
We first restrict to $s\in\mathbb{C}^n$ with ${\rm Re}(s_j-s_{j+1})\geq 2$ for all $j=1,\ldots, n-1$ to have a bounded integrand and then analytically continue at the end. Our present interest is in establishing a recurrence in the matrix dimension $n$.

\begin{lemma}[Recursion of $f_n$]\label{lem:rec}\

Let $n>2$, $a\in A=\mathbb{R}_+^n$ be the singular values of $x\in H={\rm o}(2n)$, assumed 
non-degenerate and $s\in\mathbb{C}^n$ with ${\rm Re}(s_j-s_{j+1})\geq 2$ for all $j=1,\ldots, n-1$. We have
\begin{multline}\label{recursion}
f_n(s,\imath a\otimes \tau_2)=\frac{(2n-2)!}{(n-1)!}(\det a)^{s_{n}+n-1} \int_{\mathbb{R}_+^{n-1}}d\tilde{a} \frac{\Delta_{n-1}(\tilde{a}^2)}{\Delta_{n}(a^2)}\det\left[\begin{array}{c} 1\quad\cdots\quad 1 \\ (a_c-\tilde{a}_b)\Theta(a_c-\tilde{a}_b) \end{array}\right]_{\substack{b=1,\ldots,n-1 \\ c=1,\ldots,n}}\\
\times f_{n-1}(\diag(s_{1}-s_{n}-n,\ldots,s_{n-1}-s_{n}-n),\imath\diag(\tilde{a}_1,\ldots\tilde{a}_{n-1})\otimes\tau_2).
\end{multline}
\end{lemma}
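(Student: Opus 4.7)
The plan is to peel off the $k$-independent $j=n$ factor from the integrand of Eq.~\eqref{a:f.def}, use the Haar invariance of $d^*k$ to insert an $O(2n-2)$-average that renders the remaining integrand a function of the singular values of the corank-$2$ block $\Pi_{2n-2,2n}\,k\,x\,k^T\,\Pi_{2n-2,2n}^T$, and then apply Proposition~\ref{propW1} to convert the outer $k$-integral into an $\tilde a$-integral. The resulting inner $O(2n-2)$-integral is recognised as $f_{n-1}$ at a shifted argument.

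First I would note that since $s_{n+1}=-n-1$ the $j=n$ factor equals $[\det k(\imath a\otimes\tau_2)k^T]^{(s_n+n-1)/2}$; because $\det(\imath a\otimes \tau_2)=(\det a)^2$, this is $k$-independent and equals $(\det a)^{s_n+n-1}$. For $j=1,\dots,n-1$ the identity $\Pi_{2j,2n}=\Pi_{2j,2n-2}\,\Pi_{2n-2,2n}$ shows that the remaining factors depend on $k$ only through the corank-$2$ block
\[
Y(k)\;:=\;\Pi_{2n-2,2n}\,k(\imath a\otimes\tau_2)k^{T}\,\Pi_{2n-2,2n}^{T}\,\in\,{\rm o}(2n-2).
\]

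Next I would exploit the embedding $O(2n-2)\times O(2)\hookrightarrow O(2n)$: for any $k'\in O(2n-2)$ the substitution $k\mapsto\diag(k',\eins_2)\,k$ preserves $d^*k$ and sends $Y(k)\mapsto k'Y(k){k'}^T$. Inserting a Haar integration over $k'$ and interchanging the order of integration (legitimate for ${\rm Re}(s_j-s_{j+1})\geq 2$ by absolute integrability, with the general case recovered by the Carlson-type continuation already used for $\Phi$), the integrand in the outer integral becomes $K'={\rm O}(2n-2)$-invariant and hence a function only of the singular values $\tilde a(k)\in\mathbb{R}_+^{n-1}$ of $Y(k)$. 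By Proposition~\ref{propW1} the push-forward of $d^*k$ under $k\mapsto\tilde a(k)$ is $p_A(\tilde a|a)\,d\tilde a$, so writing $Y(k)=U(k)(\imath \tilde a(k)\otimes\tau_2)U(k)^T$ with $U(k)\in O(2n-2)$ and absorbing $U(k)$ into the Haar variable $k'$ yields
\[
f_n(s,\imath a\otimes\tau_2)=(\det a)^{s_n+n-1}\!\int_{\mathbb{R}_+^{n-1}}\!d\tilde a\,p_A(\tilde a|a)\!\int_{O(2n-2)}\!d^*k'\prod_{j=1}^{n-1}\!\left[\det\Pi_{2j,2n-2}k'(\imath\tilde a\otimes\tau_2){k'}^{T}\Pi_{2j,2n-2}^{T}\right]^{(s_j-s_{j+1})/2-1}.
\]

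Finally I would identify the inner integral with $f_{n-1}(s',\imath\tilde a\otimes\tau_2)$ at the shifted parameter $s'_j:=s_j-s_n-n$ for $j=1,\dots,n-1$. With the convention $s'_{n}:=-(n-1)-1=-n$ built into Definition~\ref{def:spherical-functions}, the differences satisfy $s'_j-s'_{j+1}=s_j-s_{j+1}$ for $j<n-1$ and $s'_{n-1}-s'_n=(s_{n-1}-s_n-n)+n=s_{n-1}-s_n$, so every exponent agrees term-by-term. Substituting the explicit formula for $p_A(\tilde a|a)$ from Proposition~\ref{propW1} then produces Eq.~\eqref{recursion} in the stated form. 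I expect the main obstacle to be the measure-theoretic bookkeeping in the second step, in particular the clean disentangling of the Haar integral on $O(2n)$ into an integral against $p_A(\tilde a|a)\,d\tilde a$ and an inner Haar integral on $O(2n-2)$; once this separation is established, the rest is a direct matching of exponents and normalisations.
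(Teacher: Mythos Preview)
Your proposal is correct and follows essentially the same route as the paper: peel off the $j=n$ factor as $(\det a)^{s_n+n-1}$, insert an ${\rm O}(2n-2)$ Haar average via $k\mapsto\diag(k',\eins_2)k$ so that the remaining integrand depends only on the singular values of the corank-$2$ block, identify this inner integral as $f_{n-1}$ at the shifted parameter, and then apply Proposition~\ref{propW1} to convert the outer $k$-integral to the $\tilde a$-integral. Your explicit verification that the exponents match under the shift $s'_j=s_j-s_n-n$ (using the convention $s'_n=-n$) is a nice clarification the paper leaves implicit.
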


\begin{proof}
Since the function $f_n$ is $K$ invariant, i.e. $f_n(s,x)=f_n(s,kxk^T)$ for all $k\in K$ and $x\in H$, we can replace $x$ by $\imath a\otimes\tau_2$ with $a\in A$ its singular values. Then, the $j=n$
term in the product of the integrand evaluates to
\begin{equation}
( \det \Pi_{2n,2n} k x k^T \Pi_{2n,2n}^T)^{(s_n + n + 1)/2 - 1} = (\det a)^{s_n + n - 1}.
\end{equation}
Being independent of the orthogonal matrix $k$, this term can be taken outside of the integral. Since
$\Pi_{2j,2n}=\Pi_{2j,2(n-1)}\Pi_{2(n-1),2n}$ for each $j=1,\ldots,n-1$, for the remaining terms
we can write
\begin{equation}
\begin{split}
&\prod_{j=1}^{n-1} (\det \Pi_{2j,2n}kxk^T\Pi_{2j,2n}^T )^{(s_n + n + 1)/2 - 1} \\
=&\prod_{j=1}^{n-1} (\det \Pi_{2j,2(n-1)}\Pi_{2(n-1),2n}kxk^T\Pi_{2(n-1),2n}^T\Pi_{2j,2(n-1)}^T )^{(s_n + n + 1)/2 - 1}.
\end{split}
\end{equation}
We then introduce an orthogonal matrix $\tilde{k}\in{\rm O}(2n-2)$ by the multiplicative shift $k\to\diag(\tilde{k},\eins_2)k$ and integrate $\tilde{k}$ via the Haar measure on ${\rm O}(2n-2)$ to obtain
\begin{equation}
\begin{split}
f_n(s,\imath a\otimes \tau_2)=&(\det a)^{s_{n}+n-1}\int_{K={\rm O}(2n)}d^*k\int_{K={\rm O}(2n-2)}d^*\tilde{k}\\
&\times\prod_{j=1}^{n-1} [\det \Pi_{2j,2(n-1)}\tilde{k}\Pi_{2(n-1),2n}k(\imath a\otimes \tau_2)k^T\Pi_{2(n-1),2n}^T\tilde{k}^T\Pi_{2j,2(n-1)}^T]^{(s_j-s_{j+1})/2-1}\\
=&(\det a)^{s_{n}+n-1}\int_{K={\rm O}(2n)}d^*k \\
&\times f_{n-1}(\diag(s_{1}-s_{n}-n),\ldots,s_{n-1}-s_{n}-n),\Pi_{2(n-1),2n}k(\imath a\otimes \tau_2)k^T\Pi_{2(n-1),2n}^T).
\end{split}
\end{equation}
Since also $f_{n-1}$ is ${\rm O}(2n-2)$-invariant we can replace $\tilde{x}=\Pi_{2(n-1),2n}k(\imath a\otimes \tau_2)k^T\Pi_{2(n-1),2n}^T$ by $\imath\tilde{a}\otimes\tau_2$ with $\tilde{a}\in\mathbb{R}_+^{n-1}$ the singular values of the $2(n-1)\times2(n-1)$ dimensional real antisymmetric random matrix $\Pi_{2(n-1),2n}k(\imath a\otimes \tau_2)k^T\Pi_{2(n-1),2n}^T$. The jPDF of $\tilde{a}$ is given in Proposition \ref{propW1} which yields the recursion~(\ref{recursion}).
\end{proof}

\subsection{Proof of Theorem~\ref{thm:spher.func}}\label{sec:proof}

Let  $a\in A=\mathbb{R}_+^n$ be the singular values of $x\in H={\rm o}(2n)$, assumed non-degenerate, and $s\in\mathbb{C}^n$ with ${\rm Re}(s_j-s_{j+1})\geq 2$ for all $j=1,\ldots, n-1$.
The spherical function~\eqref{spher:as} is given in terms of $f_n$ according to 
\begin{equation}\label{a.proof.2}
\Phi(s;x)=\frac{f_n(s,x)}{f_n(s,\imath\eins_n\otimes\tau_2)},
\end{equation}
and so the explicit knowledge of $f_n$ implies the explicit form of $\Phi$.

Regarding the latter, we will use complete induction to show
\begin{equation}\label{a.proof.3}
f_n(s; \imath a\otimes\tau_2)=c_n(s)\frac{\det[a_c^{s_b+n-1}]_{b,c=1,\ldots,n}}{\Delta_n(a^2)},
\end{equation}
where
\begin{equation}\label{cns}
c_n(s) = \frac{(-1)^{n(n-1)/2}\prod_{j=0}^{n-1}(2j)!}{\Delta_n(s) \prod_{1\leq k<l\leq n}(s_k-s_l-1)}.
\end{equation}
For $n=1$, we have $f_1(s_1;\imath a_1\tau_2)=a_1^{s_1}$ agreeing with the ansatz~\eqref{a.proof.3} with $c_1(s_1)=1$ as is consistent with Eq.~(\ref{cns}).

For the induction step we substitute the ansatz~\eqref{a.proof.3}  for $f_{n-1}$ 
in the recursion \eqref{recursion} to obtain
\begin{multline}\label{a.proof.4}
f_n(s,\imath a\otimes \tau_2)=c_{n-1}(\diag(s_{1}-s_{n}-n),\ldots,s_{n-1}-s_{n}-n))\frac{(2n-2)!}{(n-1)!}(\det a)^{s_{n}+n-1}\\
\times \int_{\mathbb{R}_+^{n-1}}d\tilde{a} \frac{\Delta_{n-1}(\tilde{a}^2)}{\Delta_{n}(a^2)}\det\left[\begin{array}{c} 1\quad\cdots\quad 1 \\ (a_c-\tilde{a}_b)\Theta(a_c-\tilde{a}_b) \end{array}\right]_{\substack{b=1,\ldots,n-1 \\ c=1,\ldots,n}}\frac{\det[\tilde{a}_c^{s_b-s_n-2}]_{b,c=1,\ldots,n-1}}{\Delta_{n-1}(\tilde{a}^2)} \\
=c_{n-1}(\diag(s_{1}-s_{n}-n),\ldots,s_{n-1}-s_{n}-n))(2n-2)!\frac{(\det a)^{s_{n}+n-1}}{\Delta_{n}(a^2)}\\
\times \det\left[\begin{array}{c} 1\quad\cdots\quad 1 \\ \int_0^{a_c} dt(a_c-t)t^{s_b-s_n-2} \end{array}\right]_{\substack{b=1,\ldots,n-1 \\ c=1,\ldots,n}}\\
=\frac{(-1)^{n-1}(2n-2)!}{\prod_{l=1}^{n-1}(s_l-s_n-1)(s_l-s_n)}c_{n-1}(\diag(s_{1}-s_{n}-n),\ldots,s_{n-1}-s_{n}-n))\frac{\det[a_c^{s_b+n-1}]_{b,c=1,\ldots,n}}{\Delta_n(a^2)}.
\end{multline}
In the second equality we used a variant of Andr\'eief's integration identity~\cite[Appendix C.1]{Kieburg:2010}, and in the third we integrated by parts using the fact that the boundary terms vanish for  $b=1,\ldots,n-1$ because ${\rm Re}(s_b-s_{n})\geq 2(n-b)$. Comparison of this result with the ansatz~\eqref{a.proof.3} yields the recursion in the constants
\begin{equation}
c_n(s)=\frac{(-1)^{n-1}(2n-2)!}{\prod_{l=1}^{n-1}(s_l-s_n-1)(s_l-s_n)}c_{n-1}(\diag(s_{1}-s_{n}-n),\ldots,s_{n-1}-s_{n}-n)),
 \end{equation}
which with the initial condition $c_1(s)=1$ implies Eq.~(\ref{cns}).

For $a=\eins_n$ we obtain with the help of l'H\^opital's rule
\begin{equation}\label{a.proof.5}
f_n(s; \imath \eins_n\otimes\tau_2)=\frac{\prod_{j=0}^{n-1}(2j)!/j!}{\prod_{1\leq k<l\leq n}2(s_k-s_l-1)}.
\end{equation}
Dividing Eq.~\eqref{a.proof.4} by Eq.~\eqref{a.proof.5} yields Eq.~\eqref{spher:as.b}.

The result~\eqref{a.proof.4} can be analytically continued in the complex variables $z_j=s_j-s_{j+1}-2$ with $j=1,\ldots,n-1$ when dividing the spherical function $\Phi(s,x)$ by $\max_{j=1,\ldots,n}\{a_j^{\sum_{l=1}^ns_l}\}$. This division is equivalent with restricting all $a_j$ to the interval $[0,1]$. Then the singular values of each matrix $\Pi_{2j,2n}kxk^T\Pi_{2j,2n}^T$ are also restricted to the interval $[0,1]$ and $f_n(s,\imath a\otimes \tau_2)\prod_{1\leq k<l\leq n}(s_k-s_l-1)(s_l-s_k)$ is a bounded analytic function in each $z_j$ on the complex half-hyperplanes ${\rm Re}\, z_j\geq0$. Carlson's theorem can be applied to Eq.~\eqref{a.proof.4} for each $z_j$ to uniquely analytically continue to the whole complex plane which concludes the proof of Theorem~\ref{thm:spher.func}.

 \end{document}